\documentclass[11pt]{article}
\usepackage{fullpage}

\usepackage[T1]{fontenc}
\usepackage[english]{babel}

\usepackage{amsmath,amssymb,amsthm,mathtools,esint}
\usepackage{tikz-cd}
\usetikzlibrary{decorations.markings}
\usepackage{graphicx,subcaption}
\usepackage{enumerate}
\usepackage{dsfont}
\usepackage{verbatim}
\usepackage{hyperref}

\theoremstyle{plain}
\newtheorem{theorem}{Theorem}[section] 
\newtheorem{lemma}[theorem]{Lemma}
\newtheorem{proposition}[theorem]{Proposition}
\newtheorem{corollary}[theorem]{Corollary}

\newtheorem{example}[theorem]{Example}
\theoremstyle{definition}
\newtheorem{definition}[theorem]{Definition}
\newtheorem{remark}[theorem]{Remark}
\numberwithin{equation}{section}

\mathtoolsset{showonlyrefs}

\newcommand{\ZZ}{\mathbb{Z}}
\newcommand{\RR}{\mathbb{R}}
\newcommand{\CC}{\mathbb{C}}

\newcommand{\Ordo}{\mathcal{O}}
\newcommand{\EE}{\mathbb{E}}
\newcommand{\PP}{\mathbb{P}}
\renewcommand{\d}{\,\mathrm{d}}
\renewcommand{\i}{\mathrm{i}}
\newcommand{\e}{\mathrm{e}}

\DeclareMathOperator{\im}{Im}
\DeclareMathOperator{\re}{Re}

\DeclareMathOperator{\Tr}{Tr}

\DeclareMathOperator{\adj}{adj}
\DeclareMathOperator{\one}{\mathds{1}}

\DeclareMathOperator{\GUE}{GUE}

\graphicspath{ {Images/} }

\title{Marked GUE-corners process in doubly periodic dimer models}

\author{Tomas Berggren\footnote{Department of Mathematics \& statistics, University of South Florida, USA. E-mail: tberggren@usf.edu} 
\and Nedialko Bradinoff\footnote{Department of Mathematics, Royal Institute of Technology, Sweden. E-mail: nedialko@kth.se}
}

\begin{document}
\date{}

\maketitle

\begin{abstract}
We study a family of periodically weighted Aztec diamond dimer models near their turning points. We establish that, asymptotically, as $N\rightarrow\infty$, their fluctuations there, scaled by $\sqrt{N}$, are described by a marked GUE-corners process. This limiting point process is constructed by assigning a Bernoulli mark independently to each particle in a realization of the GUE-corners process. The Bernoulli parameters associated with the random marks reflect the periodicity of the model in the limit. To prove this result we use a double-contour integral representation of the inverse Kasteleyn matrix on a higher-genus Riemann surface, which is well-suited for asymptotic analysis. 
\end{abstract}

\tableofcontents

\section{Introduction}

\subsection{Preface}

\textit{Planar dimer models} take a central position in statistical mechanics and combinatorics, serving as a rare class of two-dimensional lattice models that remain amenable to exact analysis. Beginning with the seminal works of Kasteleyn \cite{Kas61} and Temperley–Fisher \cite{TF61}, which express the \emph{partition function} of a dimer model (equivalently, the enumeration of \textit{dimer covers} or \textit{perfect matchings}) in terms of determinants and Pfaffians, the subject has developed into a vibrant area of research, with sustained activity in the mathematical community over the last three decades.
Among the most accessible and influential instances are domino tilings of the \textit{Aztec diamond}, introduced in \cite{EKLP92a, EKLP92b}, where the ``arctic circle phenomenon''--the coexistence of frozen and disordered regions separated by a sharp interface known as the \textit{arctic curve}--was proved for the first time \cite{JPS98}. This model subsequently became a testing ground for general ideas in statistical mechanics and particularly dimer models.

The uniformly weighted Aztec diamond is by now a classical model, and its local and global statistics are well understood \cite{BG18, CJY15, CEP96, Joh05a}. 
 Over the last decade, new analytic and algebraic tools--combining spectral curves, algebraic geometry, and refined asymptotics--have led to substantial progress for \textit{doubly periodic} edge weights. In this richer setting, the phase diagram may include not only \textit{frozen} and \textit{rough} (\textit{liquid}) regions, but also \textit{smooth} (\textit{gaseous}) phases, reflecting the fact that the underlying spectral curve typically has higher genus \cite{KOS06}. Following the initial analysis of the \emph{two-periodic Aztec diamond}--the simplest instance in which a smooth phase appears--\cite{CJ16,CY14,DK21}, the theory has developed rapidly: for general periodic weights one now has explicit descriptions of limit shapes and arctic curves \cite{ADPZ20,BB23,BB24}, and detailed results on local \cite{BB23,BdT24} and global \cite{BN25} fluctuations, revealing new qualitative phenomena absent in the uniform case.
 
A natural question is then whether periodicity influences critical behavior, more precisely, the local edge fluctuations near the arctic curve. 
We study this question near the \textit{turning points}--the points where the arctic curve touches the boundary.

In the uniformly weighted Aztec diamond these local fluctuations 
are governed by the \textit{GUE-corners} process \cite{JN06}. The GUE-corners process is a multi-level determinantal point process defined by the eigenvalues of the principal leading sub-matrices of an infinite GUE-matrix and has by now been proved to be a universal limit at turning points in uniformly weighted lozenge tilings \cite{AG21, GP15, MP17, OR06}.
In addition, the GUE-corners process has been proved to govern the limit around turning points also in plane partitions containing more than one turning point on one side of the domain \cite{Mkr21}, as well as in the six-vertex model, \cite{Dim20, DR20, GL25}.
Very recently, the GUE-corners process was also observed in the two-periodic Aztec diamond \cite{RR25}, see the last paragraph in Section~\ref{sec:intro:main_results} for more details.

In this paper, we study the corresponding 
limit for a class of doubly periodic Aztec diamond dimer models and show that the microscopic periodic structure survives in a nontrivial way: the limiting object is a \textit{marked GUE-corners process}. Informally, the periodicity introduces intrinsic \textit{marks} (or colors) that persist under the critical scaling, and the resulting correlation functions converge to those of a marked extension of the classical GUE-corners ensemble. 
Moreover, we define, on the discrete level, a point process that converges to this limit. 
This identifies a new mechanism by which periodic microscopic data enrich critical limits.

\subsection{The Aztec diamond and an interlacing particle system}\label{sec:Aztec_interlacing_intro}

The results in this work are in the context of the \textit{Aztec diamond dimer model} (or the \textit{Aztec diamond}), a dimer model defined on the {Aztec diamond graph} $G_{Az}$. The \textit{Aztec diamond graph} of size $N$ is a bipartite graph defined on a subset of $\{0,1,2,\ldots, 2N\}^2\subset \mathbb{Z}^2$ so that there are black vertices on points with even $x$-coordinate and odd $y$-coordinate and white vertices at points with odd $x$-coordinate and even $y$-coordinate. Two vertices are adjacent if both their coordinates differ by exactly $1$, see Figure \ref{fig:Aztec_diamond_graph_4}. Depending on the vertices they are adjacent to, the edges  (and their corresponding dimers in a random outcome) are classified as North, South, East, or West. A \emph{perfect matching} or \emph{dimer cover} of $G_{Az}$ is a subgraph of $G_{Az}$ in which each vertex is incident to exactly one edge (\emph{dimer}).  

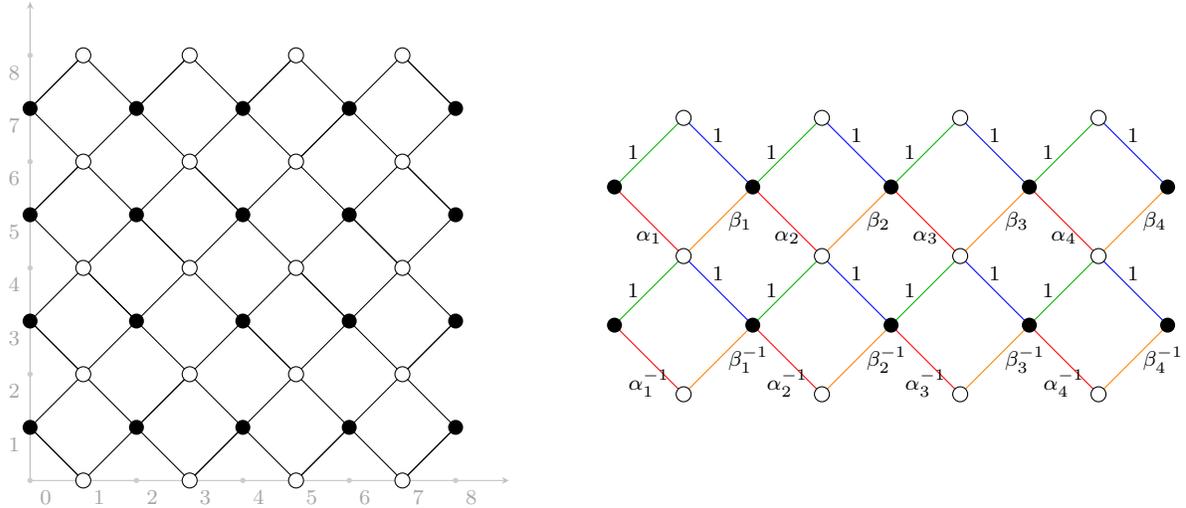
\begin{figure}
\centering
\begin{minipage}{0.48\textwidth}
\centering
\begin{tikzpicture}[scale=1, rotate=-45] 
\begin{scope}[rotate=0]
  \draw[gray!50, very thin, -{stealth[scale=0.8]}] (0,-3) -- (4.5,1.5);
  \draw[gray!50, very thin] (0,-3) -- (-4.5,1.5);
  \draw[gray!50, very thin, {stealth[scale=0.8]}-] (-4.5,1.5) -- (0,-3); 
  
  \foreach \x/\label in {0.5/1, 1/2, 1.5/3, 2/4, 2.5/5, 3/6, 3.5/7, 4/8} {
    \coordinate (pt) at (\x, \x-3); 
    \fill[gray!40] (pt) circle (1pt);
    \node[gray!70, font=\scriptsize, anchor=north west] at (pt) {$\label$};
  }
  
  \foreach \x/\label in {0.5/1, 1/2, 1.5/3, 2/4, 2.5/5, 3/6, 3.5/7, 4/8} {
    \coordinate (pt) at (-\x, \x-3); 
    \fill[gray!40] (pt) circle (1pt);
    \node[gray!70, font=\scriptsize, anchor=north east] at (pt) {$\label$};
  }
  
  \fill[gray!40] (0,-3) circle (1pt);
  \node[gray!70, font=\scriptsize, anchor=north west] at (0,-3) {$0$};
\end{scope}

\foreach \x in {0,1,2,3}
\foreach \y in {0,...,3}
{
  \draw[thin] (-3.5+\x+\y,.5-\x+\y) rectangle (-2.5+\x+\y,1.5-\x+\y);
}

\foreach \x/\y in {1/-2,2/-1,3/0,2/1,-1/2}
{
  \draw[thin] (\x+.5,\y+.5)--(\x+.5,\y+1.5);
  \draw (\x+.5,\y+.5) node[circle,draw=black,fill=white,inner sep=2pt]{};
  \draw (\x+.5,\y+1.5) node[circle,fill,inner sep=2pt]{};
}

\foreach \x/\y in {-2/1,0/1,0/3,-1/4,-2/-1}
{
  \draw[thin] (\x+.5,\y+.5)--(\x+1.5,\y+0.5);
  \draw (\x+.5,\y+.5) node[circle,draw=black,fill=white,inner sep=2pt]{};
  \draw (\x+1.5,\y+.5) node[circle,fill,inner sep=2pt]{};
}

\foreach \x/\y in {-4/0,-3/1,-2/2}
{ 
  \draw[thin] (\x+.5,\y+.5)--(\x+.5,\y+1.5);
  \draw (\x+.5,\y+.5) node[circle,fill,inner sep=2pt]{};
  \draw (\x+.5,\y+1.5) node[circle,draw=black,fill=white,inner sep=2pt]{};
}

\foreach \x/\y in {-3/-1,0/-2}
{ 
  \draw[thin] (\x+.5,\y+.5)--(\x+.5,\y+1.5);
  \draw (\x+.5,\y+.5) node[circle,fill,inner sep=2pt]{};
  \draw (\x+.5,\y+1.5) node[circle,draw=black,fill=white,inner sep=2pt]{};
}

\foreach \x/\y in {-2/-2,0/0}
{
  \draw[thin] (\x+.5,\y+.5)--(\x+1.5,\y+0.5);
  \draw (\x+.5,\y+.5) node[circle,fill,inner sep=2pt]{};
  \draw (\x+1.5,\y+.5) node[circle,draw=black,fill=white,inner sep=2pt]{};
}

\foreach \x/\y in {-1/-3,0/2,-2/0}
{
  \draw[thin] (\x+.5,\y+.5)--(\x+1.5,\y+0.5);
  \draw (\x+.5,\y+.5) node[circle,fill,inner sep=2pt]{};
  \draw (\x+1.5,\y+.5) node[circle,draw=black,fill=white,inner sep=2pt]{};
}

\end{tikzpicture}
\end{minipage}%
\hfill
\begin{minipage}{0.48\textwidth}
\centering
\begin{tikzpicture}[scale=1.3, rotate=-45]

\foreach \y in {0,...,3}
{
  \draw[thin, red] (-3.5+\y,.5+\y) -- (-2.5+\y,.5+\y); 
  \draw[thin, blue] (-3.5+\y,1.5+\y) -- (-2.5+\y,1.5+\y); 
  \draw[thin, green!70!black] (-3.5+\y,.5+\y) -- (-3.5+\y,1.5+\y); 
  \draw[thin, orange] (-2.5+\y,.5+\y) -- (-2.5+\y,1.5+\y); 
  
  \pgfmathtruncatemacro{\subscript}{\y+1}
  \node[font=\scriptsize, black] at (-3+\y, .5+\y) [below] {$\alpha_{\subscript}$}; 
  \node[font=\scriptsize, black] at (-3+\y, 1.5+\y) [above] {$1$}; 
  \node[font=\scriptsize, black] at (-3.5+\y, 1+\y) [left] {$1$}; 
  \node[font=\scriptsize, black] at (-2.5+\y, 1+\y) [right] {$\beta_{\subscript}$}; 
}

\foreach \y in {0,...,3}
{
  \draw[thin, red] (-2.5+\y,-.5+\y) -- (-1.5+\y,-.5+\y); 
  \draw[thin, blue] (-2.5+\y,.5+\y) -- (-1.5+\y,.5+\y); 
  \draw[thin, green!70!black] (-2.5+\y,-.5+\y) -- (-2.5+\y,.5+\y); 
  \draw[thin, orange] (-1.5+\y,-.5+\y) -- (-1.5+\y,.5+\y); 
  
  \pgfmathtruncatemacro{\subscript}{\y+1}
  \node[font=\scriptsize, black] at (-2+\y, -.5+\y) [below] {$\alpha_{\subscript}^{-1}$}; 
  \node[font=\scriptsize, black] at (-2+\y, .5+\y) [above] {$1$}; 
  \node[font=\scriptsize, black] at (-2.5+\y, 0+\y) [left] {$1$}; 
  \node[font=\scriptsize, black] at (-1.5+\y, 0+\y) [right] {$\beta_{\subscript}^{-1}$}; 
}

\foreach \x/\y in {3/0,2/1,-1/2}
{
  \pgfmathparse{(\y+0.5) - (\x+0.5) > 0}
  \ifdim\pgfmathresult pt>0pt
    \draw (\x+.5,\y+.5) node[circle,draw=black,fill=white,inner sep=2pt]{};
  \fi
  \pgfmathparse{(\y+1.5) - (\x+0.5) > 0}
  \ifdim\pgfmathresult pt>0pt
    \draw (\x+.5,\y+1.5) node[circle,fill,inner sep=2pt]{};
  \fi
}

\foreach \x/\y in {-2/1,0/1,0/3,-1/4,-2/-1}
{
  \pgfmathparse{(\y+0.5) - (\x+0.5) > 0}
  \ifdim\pgfmathresult pt>0pt
    \draw (\x+.5,\y+.5) node[circle,draw=black,fill=white,inner sep=2pt]{};
  \fi
  \pgfmathparse{(\y+0.5) - (\x+1.5) > 0}
  \ifdim\pgfmathresult pt>0pt
    \draw (\x+1.5,\y+.5) node[circle,fill,inner sep=2pt]{};
  \fi
}

\foreach \x/\y in {-4/0,-3/1,-2/2}
{ 
  \pgfmathparse{(\y+0.5) - (\x+0.5) > 0}
  \ifdim\pgfmathresult pt>0pt
    \draw (\x+.5,\y+.5) node[circle,fill,inner sep=2pt]{};
  \fi
  \pgfmathparse{(\y+1.5) - (\x+0.5) > 0}
  \ifdim\pgfmathresult pt>0pt
    \draw (\x+.5,\y+1.5) node[circle,draw=black,fill=white,inner sep=2pt]{};
  \fi
}

\foreach \x/\y in {-3/-1}
{ 
  \pgfmathparse{(\y+0.5) - (\x+0.5) > 0}
  \ifdim\pgfmathresult pt>0pt
    \draw (\x+.5,\y+.5) node[circle,fill,inner sep=2pt]{};
  \fi
  \pgfmathparse{(\y+1.5) - (\x+0.5) > 0}
  \ifdim\pgfmathresult pt>0pt
    \draw (\x+.5,\y+1.5) node[circle,draw=black,fill=white,inner sep=2pt]{};
  \fi
}

\foreach \x/\y in {0/2,-2/0}
{
  \pgfmathparse{(\y+0.5) - (\x+0.5) > 0}
  \ifdim\pgfmathresult pt>0pt
    \draw (\x+.5,\y+.5) node[circle,fill,inner sep=2pt]{};
  \fi
  \pgfmathparse{(\y+0.5) - (\x+1.5) > 0}
  \ifdim\pgfmathresult pt>0pt
    \draw (\x+1.5,\y+.5) node[circle,draw=black,fill=white,inner sep=2pt]{};
  \fi
}

\end{tikzpicture}
\end{minipage}
\caption{The Aztec diamond graph of size 4 on the left and the fundamental domain of the $2\times 4$ periodically weighted Aztec diamond dimer model on the right. North, East, South, and West edges are colored blue, orange, red, and green respectively.}\label{fig:Aztec_diamond_graph_4} 
\end{figure}
One introduces a weight to each edge and the associated dimer model on the graph is produced by selecting a random perfect matching on the graph 
 with probability proportional to the product of the weights of the edges in that matching. 
  In this work the weights put on the edges are selected periodically with period $\ell$ in the $x$ coordinate and period $2$ in the $y$ coordinate. We introduce $2\ell$ parameters, $\alpha_1,\ldots, \alpha_{\ell}, \beta_1,\ldots, \beta_{\ell}>0$ and distribute them on the \textit{fundamental domain} of size $2\times \ell$ as in Figure \ref{fig:Aztec_diamond_graph_4}, see \eqref{eqn:edge_weights} below for a precise definition.   
We impose the additional constraint that~$\prod_{i=1}^\ell \alpha_i=\prod_{i=1}^\ell \beta_i$ and thus are in the setting studied in \cite{Ber21}.
   For clarity we restrict our attention to the Aztec diamond of size $2\ell N$.

One way to describe a given dimer cover is to describe which vertices are incident to a South or West dimer, and, following \cite{Joh05a}, we place black/white particles at such black/white vertices.
If we restrict our attention to either only the white particles or only the black particles we get an interlacing particle system. Here we consider particles with the same $x$ coordinate to lie on the same level and we enumerate the levels from the right $(x=2\ell N)$ to the left $(x=0)$. This interlacing particle system is set to capture the local fluctuations around the turning point with coordinate $x=2\ell N$. 
The two interlacing particle systems corresponding to the white and black vertices are closely related, 
 see Figure \ref{fig:corners_aztec}. 
Proceeding with the convention in \cite{Joh05a} we restrict our attention to black vertices adjacent to a South or West edge and study the point process described by them, $(u_{s}^t)_{1\leq s\leq t}$.  %

\begin{figure}
\begin{center}
\includegraphics[scale=.35]{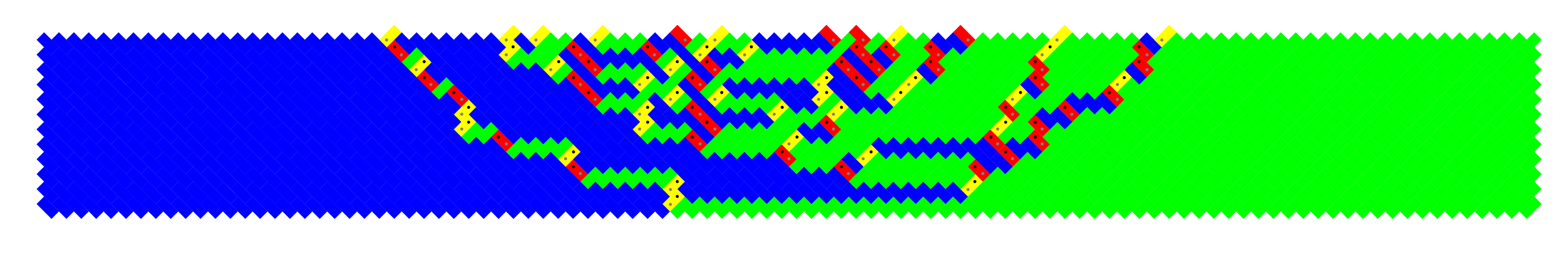}
\end{center}
\caption{The two interlacing particle systems near the turning point, in grey and black, defined from South (yellow) and West (red) edges from a random outcome of the Aztec diamond; The picture is rotated so that the first particle is at the bottom instead of to the right. Dimers in the picture are illustrated as dominoes.} \label{fig:corners_aztec}
\end{figure}

It turns out significant for the studied process in this work that the two periodicity of the weights along the $y$ axis gives rise to a natural further coloring of these particles;  we color them \textbf{red} if their $y$ coordinate is even and \textbf{cyan} if it is odd, see prescribed colors in Figure \ref{fig:interlacing}.
Enumerated from the right to the left the colored particles describe a (colored) interlacing particle system, $(u_{s,j}^t)_{1\leq s\leq t}$, where $j\in\{0,1\}$ indicates the color associated to each particle, $t$ is its level, and $s$ is its relative position with respect to the particles on the same level.

\subsection{The GUE-corners process}
The \textit{ GUE-corners process}  (also known as the \textit{GUE minor process}), $\PP_{\operatorname{GUE}}$, is a \textit{determinantal point process} on $\Lambda=\ZZ_{>0}\times \mathbb{R}$.
One way in which it can be constructed is by taking a random $\ZZ_{>0}\times \ZZ_{>0}$ hermitian matrix, $X=(X_{ij})_{i,j\in\ZZ_{>0}}$ with $X_{ij}=\overline{X_{ji}}$ such that the entries above and on the main diagonal $(i=j)$ are independent random variables with
$$X_{ij}\sim \frac{1}{\sqrt{2}}( N(0,1)+i N(0,1))\quad \text{for } i>j,\quad \text{and}\quad X_{ii}\sim N(0,1).$$
Then for $t\in\ZZ_{>0}$ the top left $t\times t$ corner of  $X$, $X^{(t)}=(X_{ij})_{1\leq i,j \leq t}$ is distributed as a $\GUE(t)$ matrix with ordered eigenvalues 
$\xi_1^t\leq \xi_2^t\leq\ldots \leq \xi_t^t,$
and the matrix $X$ couples the eigenvalues on different levels so that they interlace, namely 
 $\xi_s^{t+1}\leq \xi_s^t\leq \xi_{s+1}^{t+1}$ and the process given by 
$\left(\xi_s^{t} \right)_{1\leq s \leq t,  t\in{\ZZ_{>0}}}$ defines the GUE-corners process. That this process is well-defined is explained, for instance, in \cite{JN06}. 

\subsection{The marked GUE-corners process}
A defining feature in this work is the emergence of a \emph{marked point process} in the limit of the interlacing particle system as $N\rightarrow\infty$; the limiting point process ``remembers'' the colors of the points coming from the $2$ periodicity in the vertical direction.

For a measurable function $\theta:\Lambda\rightarrow [0,1]$ we call the \textit{marked GUE-corners} process, $\PP_{\operatorname{GUE}}^\theta$, the point process on ~$\Lambda_{\{0,1\}}=\Lambda \times \{0,1\}$ constructed out of the GUE-corners process on $\Lambda$ by independently assigning to each point a binary value; a mark $m\sim \operatorname{Bernoulli}(\theta(t,\mu))$, where~$(t,\mu)\in \Lambda$ is the location of the point. We denote these marked particles by~$(\xi_{s,j}^t)_{1\leq s\leq t}$, where $j\in \{0,1\}$ represents the marking.
It was proved in~\cite{CG23} -- for a general determinantal point process under very mild assumptions -- that the point process~$\PP_{\operatorname{GUE}}^\theta$ is a determinantal point process with correlation kernel
\begin{equation}\label{eq:intro:marked_kernel}
K_{\operatorname{GUE}}^\theta(t_1,\mu_1,j_1;t_2,\mu_2,j_2)=\left(\theta(t_1,\mu_1)\delta_{1j_1}+(1-\theta(t_1,\mu_1))\delta_{0j_1}\right)K_{\operatorname{GUE}}(t_1,\mu_1;t_2,\mu_2),
\end{equation}
where~$\delta_{jj'}$ is the Kronecker delta function. 

Marked point processes were introduced and studied in \cite{CG23}, with the interpretation that a particle is observed or not in a measurement depending on the value of the mark. The purpose of that paper was to study certain conditional probabilities of these marked processes. This was later used to study certain deformations of biorthogonal ensembles \cite{CS25}. 

\subsection{Main results}\label{sec:intro:main_results}
For the choice of weights considered in this paper, there are four \emph{turning points}--points where the \emph{arctic curve} touches the boundary of the Aztec diamond--one on each side of the Aztec diamond. We focus in this work on the right-most turning point with coordinates $(2\ell N, 2 \tau N)$ for some $\tau>0$ (which we determine). The fluctuations around this turning point are captured by the particle system~$(u_{s}^t)_{1\leq s\leq t\leq 2\ell N}$. 
The unique particle on the first level is distributed around the right-most turning point with fluctuations of order $\sqrt{N}$, similarly to the uniform case. Thus we center the vertical coordinates of the studied process at $2\tau N$ and rescale by $\sqrt{N}$, while we do not rescale the horizontal coordinate. The new coordinates are $(t,\mu)\in \Lambda=\ZZ_{>0}\times \RR$. The process constructed in this fashion in the case of the Aztec diamond with uniform weights is classically known to converge to the celebrated GUE-corners process \cite{JN06}. 
In this work we prove that the corresponding limit in the $2\times \ell$ periodic setting is affected by the 2-periodicity in the vertical direction. 

The particle system~$(u_{s}^t)_{1\leq s\leq t\leq 2\ell N}$ is a determinantal point process with correlation kernel~$K_{\operatorname{Int}}$,
see Theorem~\ref{thm:correlation_kernel} below. We study its fluctuations around the turning point via this correlation kernel. The coordinates of the correlation kernel are naturally expressed by $(\ell x+i,2y+j)$ for $i=0,\dots,\ell-1$ and $j=0,1$, and in those coordinates, the coordinate of the black vertex associated with~$u_s^t$ is~$(\ell x+i,2y+j)=(2\ell N-t,u_s^t)$. The centering at the turning point and scaling discussed above, is then given by ~$y=\lfloor N\tau +\sqrt{N}\mu\rfloor$.

We have the following result for the limiting correlation kernel of the process under the described scaling.

\begin{theorem}[Theorem \ref{thm:limit_correlation_function}]\label{thm:0}
Let~$K_{\operatorname{Int}}$ be the correlation kernel of $(u_{s}^t)_{1\leq s\leq t\leq 2\ell N}$ and suppose~$\nu:\ZZ_{>0}\times \{0,1\} \rightarrow \mathbb{R}$ is given by
\begin{equation}
 \nu(t,j)=
\begin{cases}
 \frac{1}{1+\alpha_{\ell+1-t}\beta_{\ell-t}^{-1}} \quad \text{if } j=0, \\
 \frac{\alpha_{\ell+1-t}\beta_{\ell-t}^{-1}}{1+\alpha_{\ell+1-t}\beta_{\ell-t}^{-1}} \quad \text{if } j=1,
\end{cases} 
\end{equation}
where $\alpha_{\ell-t}$ and $\beta_{\ell-t}$ are the edge weights of the model (see Figure~\ref{fig:Aztec_diamond_graph_4} and later \eqref{eqn:edge_weights}).
Let the gauge function~$g:\{0,1,\ldots, 2\ell N\}^2\rightarrow \mathbb{R}$ be defined by~\eqref{eq:gauge_function} and let~$\tau, \sigma>0$ be (explicitly) given by~\eqref{eq:turning_point} and~\eqref{eq:sigma_square} below.
%
%
Set~$\ell x_k+i_k=2\ell N-t_k$ and~$y_k=\lfloor N\tau +\sqrt{N}\mu_k\rfloor$, for $k=1,2$. Then, for $\mu_1\neq \mu_2$,
\begin{multline}
\lim_{N\to \infty}\frac{g(\ell x_1+i_1,2y_1+j_1)}{g(\ell x_2+i_2,2y_2+j_2)}N^{\frac{1}{2}}K_{\operatorname{Int}}\left(\ell x_1+i_1,2y_1+j_1;\ell x_2+i_2,2y_2+j_2\right) \\
=\nu(t_2,j_2)\sigma^{-1}K_{\operatorname{GUE}}(t_1,\sigma^{-1}\mu_1;t_2,\sigma^{-1}\mu_2), 
\end{multline}
%
%
where
\begin{equation}
K_{\operatorname{GUE}}(t_1,\mu_1;t_2,\mu_2)=\frac{1}{(2\pi\i)^2}\int_{\gamma_s'}\int_{\gamma_l'}\e^{\frac{1}{2}(z_2^2-z_1^2)}\e^{\mu_1 z_1-\mu_2z_2}\frac{z_2^{t_2}}{z_1^{t_1}}\frac{\d z_1\d z_2}{z_2-z_1},
\end{equation}
 is the correlation kernel of the GUE-corners process. The sequence on the left-hand side is uniformly bounded in $N$
 on compact subsets of 
$\Lambda_{\{0,1\}}^2$ with respect to the embedding given by $(t_p, \mu_p, j_p)\in \Lambda_{\{0,1\}}$, for $p=1,2$. See Theorem~\ref{thm:limit_correlation_function} for a definition of the curves~$\gamma_s'$ and~$\gamma_{\ell}'$.
 \end{theorem}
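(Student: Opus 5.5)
We plan a saddle point analysis of the double contour integral representation of $K_{\operatorname{Int}}$ from Theorem~\ref{thm:correlation_kernel}. That representation comes from the inverse Kasteleyn matrix of \cite{Ber21}. We expect it to have the form of a double integral over cycles on the spectral curve $\mathcal R$. The integrand should be $F(q_1)/F(q_2)$, where $F(q)^{N}$-type factors $\prod (\text{meromorphic functions})^{x}$ and $z^{-y}$ carry the large-$N$ dependence. It also contains a $2\times 2$ matrix-valued prefactor that encodes $i$ and $j$, and a Cauchy-type kernel on $\mathcal R$.

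\textbf{Step 1: the phase function.} Write the $N$-dependent part as $\e^{N(F(q_1;\xi,\eta)-F(q_2;\xi,\eta))}$, with $(\xi,\eta)=(\ell,\tau)$ at the turning point. Since $x=2\ell N-t$ sits at the boundary, we expect the factor $\phi(q)^{x}$ to become $\phi(q)^{2\ell N}\phi(q)^{-t}$. The finite power $\phi(q)^{-t}$ then survives and should produce the $z^{t_2}/z_1^{t_1}$ in $K_{\operatorname{GUE}}$. This requires $\phi$ to have a simple zero at the critical point, so that locally $\phi(q)\approx c\,z/\sqrt N$. At the turning point the action $F$ should have a double critical point at a point $q_0$ on the real ovals of $\mathcal R$. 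This is expected to be the point where the boundary factor degenerates, i.e.\ $q_0$ is an angle/zero of $\phi$, which is how the turning point is characterized. We determine $\tau$ from $\partial_q F(q_0)=0$ and define $\sigma^2$ from the second derivative. Both should match \eqref{eq:turning_point} and \eqref{eq:sigma_square}. Around $q_0$, with local coordinate $w$, the expansion will be $NF = N F_0 - \sqrt N\mu w\cdot(\ldots) + \tfrac12\sigma^2 N w^2+\dots$. The substitution $w = z/(\sigma\sqrt N)$ then gives $\e^{\frac12(z_2^2-z_1^2)}\e^{\mu_1z_1/\sigma-\mu_2z_2/\sigma}$, and the Jacobian gives the factor $N^{-1/2}\sigma^{-1}$ after $N^{1/2}$ normalization. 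The gauge $g$ is chosen to cancel $\e^{N F_0}$ and the $\e^{\sqrt N(\cdot)}$ linear oscillating terms.

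\textbf{Step 2: contour deformation.} Deform $\gamma_s,\gamma_l$ to steepest descent contours through $q_0$. The small contour should lie around the zero of $\phi$, and the large one should be its reflection. They must be arranged so that $\re F$ is strictly smaller (respectively larger) away from a $N^{-1/2}$-neighbourhood of $q_0$. Crossing the contours yields a residue term. We expect that term to vanish because $\mu_1\neq\mu_2$ forces the relevant single integral to be exponentially small, or to be a Kronecker-type term absent for $y_1\neq y_2$. Contributions from other components of $\mathcal R$, the compact ovals, are exponentially suppressed and are controlled by a uniform bound on $\re F$ there. This is the main obstacle. It requires a global understanding of the level lines of $\re F$ on a higher-genus surface at a degenerate (turning) point. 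We would attempt it via the argument principle counting zeros of $dF$, as in \cite{BB23}, showing that $dF$ has exactly the double zero at $q_0$ on the relevant real component.

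\textbf{Step 3: local limit and the marks.} In the local window the prefactor matrix evaluated at $q_0$ depends on $(i_2,j_2)$. At $q=q_0$, the matrix factor from the column of the periodic transfer matrices for the $2\times\ell$ fundamental domain reduces to a rank-one projection. Its entries give the weight $1$ versus $\alpha_{\ell+1-t}\beta_{\ell-t}^{-1}$ according to the parity $j_2$, which after normalization by $g$ is $\nu(t_2,j_2)$. We would verify this by direct computation of the eigenvector at the degenerate point. The Cauchy kernel behaves like $\d w_1/(w_2-w_1)$, which becomes $\d z_1/(z_2-z_1)$. Dominated convergence with a Gaussian tail bound on the steepest descent contours gives the limit together with uniform boundedness on compacts.
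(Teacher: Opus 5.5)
Your overall route is the paper's. The saddle point is at the turning point $q_\infty=(1,\infty)$ (the point you call $q_0$) of $G(z,w)=\ell\log(z-1)+\log w-\tau\log z$. The powers $w^{-r}$ and the partial transfer products combine into $Z^{t}$, the large contour follows steepest descent, and the rank-one form of $Q(q_\infty)$ and of $(z-1)^\ell\Phi(z)|_{z=1}$ produces $\nu(t_2,j_2)$.

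The genuine gap is in Step~2, where you claim the single-integral (residue) contribution disappears because $\mu_1\neq\mu_2$. This claim is false.

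The kernel of Theorem~\ref{thm:correlation_kernel} contains, besides the double integral, a single integral $\mathcal{J}_s$ carrying $\one_{t_1>t_2}$ and the factor $z^{(\mu_1-\mu_2)\sqrt N}$; your plan never accounts for it. It is exponentially small only for $\mu_1<\mu_2$, by pushing the contour outward past $p_0$ to $|z|=2$. For $\mu_1>\mu_2$ the integrand is analytic at $p_0$ and only the residue at $q_\infty$ survives. After the gauge and the factor $N^{1/2}$ it tends to $\nu(t_2,j_2)\frac{1}{2\pi\i}\oint Z^{t_2-t_1}\e^{(\mu_1-\mu_2)Z}\d Z=\nu(t_2,j_2)\frac{(\mu_1-\mu_2)^{t_1-t_2-1}}{(t_1-t_2-1)!}$ (before rescaling by $\sigma$). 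This is of order one, and it is not a Kronecker-type term.

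Likewise, a residue picked up by crossing the contours inside the $N^{-1/2}$-window equals this same polynomial for either sign of $\mu_1-\mu_2$, so it is never exponentially small. The double integrals with the vertical line to the right or to the left of the circle differ exactly by this polynomial. Discarding these terms therefore gives the wrong kernel, for $t_1>t_2$, in one of the two regimes $\mu_1>\mu_2$ or $\mu_1<\mu_2$. This term is precisely what produces the $\mu$-dependent position of $\gamma_l'$ in the statement.

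What is needed (as in Lemma~\ref{lemma:Js}) is the following:
\begin{enumerate}
\item Keep $\mathcal{J}_s$ and prove the two asymptotic regimes above.
\item Check that it carries the same factor $\nu(t_2,j_2)$; this follows from the rank-one computation at coinciding points.
\item Bound it uniformly, including at $\mu_1=\mu_2$, where it is $\Ordo(N^{(t_2-t_1+1)/2})$ with $t_1-t_2\geq 1$.
\item For $\mu_1>\mu_2$, move the line across the circle in the limit so that the residue cancels $\mathcal{J}_s$.
\end{enumerate}

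There are also some smaller points to correct.
\begin{enumerate}
\item At the turning point $\d G$ has a \emph{simple} zero at $q_\infty$: the two saddle points have merged with the simple pole there. A double zero would force $\sigma^2=0$.
\item The small contour cannot be a steepest-ascent contour. Any loop enclosing $q_\infty$ but not $(1,0)$ crosses $\Gamma_{des}$, where $\re G<G(q_\infty)$. It should simply be shrunk to a circle of radius $R_1/\sqrt N$. This is allowed because the $z_1$-integrand has a zero at $p_0$ and no other singularity inside $\tilde\Gamma_s$, and $\frac{1}{z_2-z_1}$ gives no pole when $z_1=z_2$ on different sheets (Lemma~\ref{lem:global_matrix_meromorphicity}).
\item $\e^{NG(q_\infty)}$ cancels by itself, and there are no $\e^{\sqrt N(\cdot)}$ terms since $\log z=0$ at $z=1$. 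The gauge only absorbs $N^{t/2}$, $B^{x}$, $\sigma^{t}$ and the matrix factor $\tilde g$.
\end{enumerate}
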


The correlation kernel $K_{\operatorname{GUE}}$ as presented here was obtained in \cite{JN06, OR06}. 

The factor $\nu(t_2, j_2)$ in the limit changes the limiting kernel non-trivially and cannot be removed via a gauge. In fact the limiting function is the correlation kernel of a 
marked GUE-corners process instead of the GUE-corners process. 
Indeed, if 
\begin{equation}\label{eq:intro:theta}
\theta(t,\mu)=\theta(t)=
\frac{\alpha_{\ell+1-t}\beta_{\ell-t}^{-1}}{1+\alpha_{\ell+1-t}\beta_{\ell-t}^{-1}},
\end{equation}
then 
\begin{equation}
\nu(t,j)=\theta(t)\delta_{1j}+(1-\theta(t))\delta_{0j}
\end{equation}
is the pre-factor in \eqref{eq:intro:marked_kernel}.
By studying the colored interlacing system $(u_{s,j}^t)_{1\leq s\leq t}$, we can interpret the convergence of the correlation functions in Theorem \ref{thm:0} as a convergence of processes.

\begin{theorem}[Corollary \ref{cor:week_converence}]\label{thm:1}
Let~$(u_{s,j}^t)_{1\leq s\leq t\leq 2\ell N}$ be the colored interlacing particle system described above.
Then there are (explicit) $\tau, \sigma>0$ (see later ~\eqref{eq:turning_point} and~\eqref{eq:sigma_square}) 
 such that the turning point has coordinates $(2\ell N, 2\tau N)$ and 
\begin{equation}
\left(  \frac{u_{s,j}^{t}-2 N\tau }{2\sigma\sqrt{N}}\right)_{1\leq s\leq t}\to \left(\xi_{s,j}^t\right)_{1\leq s\leq t},
\end{equation}
in the sense of weak convergence as~$N\to \infty$, and~$(\xi_{s,j}^t)_{1\leq s\leq t}$ is the marked GUE-corners process with an underlying marking function~$\theta$ given in \eqref{eq:intro:theta}.
\end{theorem}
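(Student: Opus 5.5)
The plan is to deduce Theorem~\ref{thm:1} from Theorem~\ref{thm:0} via the standard principle that, for determinantal point processes, locally uniform convergence of correlation kernels (up to conjugation by a gauge) implies convergence of all correlation functions and hence weak convergence of the point processes.

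\textbf{Step 1: the colored process is determinantal.} The colored system $(u^t_{s,j})$ is just the uncolored process with each point at $(\ell x+i,2y+j)$ labelled by $j$. It is therefore the determinantal process on $\{1,\dots,2\ell N\}\times\ZZ\times\{0,1\}$ with kernel $K_{\operatorname{Int}}(\ell x_1+i_1,2y_1+j_1;\ell x_2+i_2,2y_2+j_2)$. Conjugating by the gauge $g$ leaves all determinants $\det[K(p_a,p_b)]$ unchanged, so we may use the gauged kernel.

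\textbf{Step 2: rescaled correlation functions.} Under the map $u\mapsto (u-2N\tau)/(2\sigma\sqrt N)$, a point with $y=\lfloor N\tau+\sqrt N\mu\rfloor$ goes to approximately $\mu/\sigma$, up to $O(N^{-1/2})$. Each vertical lattice site $y$ has mesh $N^{-1/2}$ in $\mu$, and for fixed $j$ the rescaled mesh is $1/(\sigma\sqrt N)$. Thus the $n$-point correlation densities of the rescaled colored process, with respect to Lebesgue measure times counting measure on $\{0,1\}$, equal
\[
\det\Big[\sigma N^{1/2}\tfrac{g_a}{g_b}K_{\operatorname{Int}}(\cdot_a;\cdot_b)\Big]_{a,b=1}^n ,
\]
evaluated at lattice approximations. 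We fix the normalisation by letting $\mu'=\mu/\sigma$ be the limit coordinate. By Theorem~\ref{thm:0}, off the diagonals $\mu_a\ne\mu_b$ each entry converges to $\nu(t_b,j_b)K_{\operatorname{GUE}}(t_a,\mu'_a;t_b,\mu'_b)$, and the entries are uniformly bounded on compacts. The determinant of this limit matrix is the $n$-point function of the marked kernel \eqref{eq:intro:marked_kernel}: the factor $\nu$ is attached to the second variable rather than the first, but that is a transpose/diagonal conjugation and does not change determinants.

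Diagonal points $\mu_a=\mu_b$ form a Lebesgue-null set. Together with the uniform bound and dominated convergence, this gives, for every continuous compactly supported test function $\phi$ on $\Lambda_{\{0,1\}}^n$,
\[
\EE\sum \phi \to \int \phi\,\rho_n^{\theta}.
\]
Here the Riemann sums over the lattice converge to integrals because the limit is continuous off the diagonal and bounded.

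\textbf{Step 3: from correlation functions to weak convergence.} The marked GUE-corners process has locally finite configurations. Convergence of all correlation functions with uniform local bounds implies convergence of Laplace functionals (equivalently, of gap probabilities/Fredholm determinants on compacts). Hadamard's inequality gives $|\rho_n|\le C^n n^{n/2}$ on compacts, which makes the series expansions of Laplace functionals converge absolutely and uniformly in $N$. This yields weak convergence on finitely many levels $t\le T$ in the vague topology, and hence weak convergence of the processes, identifying the limit via \cite{CG23} as $\PP^\theta_{\operatorname{GUE}}$.

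\textbf{Main obstacle.} The delicate point is Step 2: controlling the diagonal $\mu_a=\mu_b$ and the $O(N^{-1/2})$ lattice discrepancy, given that Theorem~\ref{thm:0} gives pointwise limits only off the diagonal. The uniform boundedness clause is what I would rely on to pass to the limit by dominated convergence. Everything else is a standard argument for determinantal processes.
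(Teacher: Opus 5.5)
Your route is the paper's: expand $\EE_{\operatorname{Int}}\bigl[\prod_v(1-\varphi)\bigr]$ in correlation functions, turn the lattice sums into integrals by dominated convergence using the uniform bound and the pointwise limits of Theorem~\ref{thm:0}, and recognise the limit kernel as the marked one. Your observation that $\nu$ sitting on the second variable is harmless, since $\det[\nu_bK_{ab}]=\prod_b\nu_b\det[K_{ab}]=\det[\nu_aK_{ab}]$, is correct; the paper leaves it implicit. The only structural difference is how the series is controlled. You use Hadamard's inequality. The paper instead notes that levels $t\le t_0$ carry at most $\tfrac{1}{2}t_0(t_0+1)$ particles, both in the discrete system and in the GUE-corners process, so the expansion is a finite sum. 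Either works.

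There is, however, a gap in how you handle the diagonal. The set $\{\mu_a=\mu_b\}$ is Lebesgue-null only for $a\neq b$. The entries $(a,a)$ of the correlation matrix are always evaluated at $\mu_1=\mu_2$ (with $t_1=t_2$, $j_1=j_2$), and Theorem~\ref{thm:0} as stated gives no limit there. So your argument does not establish a.e.\ convergence of $\rho_n$ for any $n$. Not even $\rho_1(t,\mu,j)$ is covered, since it is exactly a diagonal entry. Uniform boundedness cannot replace the missing pointwise limit in dominated convergence. The paper's proof of the corollary passes over the same point.

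The repair uses the proof, not just the statement, of Theorem~\ref{thm:limit_correlation_function}:
\begin{itemize}
\item When $t_1=t_2$, the single integral $\mathcal{J}_s$ in \eqref{eqn:kerneljdjs} vanishes identically, because its level indicator is strict.
\item Lemmas~\ref{lem:gcan} and~\ref{lem:Jdvalue} never use $\mu_1\neq\mu_2$.
\end{itemize}
Hence $N^{1/2}K_{\operatorname{Int}}(p;p)\to\nu(t,j)\sigma^{-1}K_{\operatorname{GUE}}(t,\sigma^{-1}\mu;t,\sigma^{-1}\mu)$. The right-hand side is unambiguous: for $t_1=t_2$, moving $\gamma_l'$ across $\gamma_s'$ changes the integral by $\frac{1}{2\pi\i}\oint \d z=0$.

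The restriction $\mu_1\neq\mu_2$ is needed at most for pairs on different levels. Those pairs necessarily have $a\neq b$, so your null-set argument does apply to them. With this supplement the proof goes through.
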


The above theorems show that the discreteness of the fundamental domain persists in the scaling limit, although the horizontal and vertical periodicities are encoded in different ways. The horizontal $\ell$-periodicity is retained through the parameter $\theta$, which depends on $t$--more precisely, on the edge weights in the ``$\ell+1-t$ column'' of the fundamental domain. By contrast, the vertical $2$-periodicity is carried by the marks $j\in\{0,1\}$. We conjecture that if the model instead is $k$-periodic in the vertical direction, then the limit will be a \textit{$k$-marked} GUE-corners process; to each outcome of the point process one assigns independently a random integer between $0$ and $k-1$.

It is interesting--and somewhat surprising--that, even though the $y$-coordinate is rescaled and converges to a continuous coordinate $\mu$, a trace of the microscopic structure survives via these marks. Moreover, once the discreteness persists in this form, it is not obvious a priori that the contributions associated with the two marks should decouple, i.e., that the marks should behave independently.

Marked processes are closely related to \emph{thinned processes}. A thinned process is defined by independently deleting each particle of a given process with some probability. These processes were introduced in the random matrix theory literature by \cite{BP06, BP04} with the motivation behind their study being to model situations in which the possibility of failed detection/measurement of particles may occur. They have since been studied in many contexts within the random matrix theory literature, for instance, in \cite{BD17, Bot17, CC17}. 
In the dimer literature a thinned process was observed in \cite{CJY15}, as the authors studied the process defined by viewing only the south dimers as particles in the \emph{biased Aztec diamond}.

Theorem \ref{thm:1} implies that if we study the process described by only the red or only the cyan particles we get a \textit{thinned} GUE-corners process as a limiting process; that is, a GUE-corners process in which each particle is deleted independently with probability $1-\theta$ or $\theta$, respectively.

\begin{corollary}[Corollary \ref{cor:thinned}]\label{maincor:1} 
The restriction of the point process~$(u_{s,j}^t)_{1\leq s\leq t\leq 2\ell N}$ to points with~$j=1$ converges, under the same scaling and in the same sense as in Theorem~\ref{thm:1}, to a thinned GUE–corners process with deletion probability~$\theta$.
\end{corollary}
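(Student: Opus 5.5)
This follows from Theorem~\ref{thm:1} by restricting the marked process to one mark; no new asymptotic analysis is needed. The plan has two parts. First, the restriction of the discrete colored process to $j=1$ is a continuous image of the full marked configuration, so weak convergence passes to it. Second, the restriction of the limiting marked GUE-corners process to points with mark $1$ is, by construction, a thinned GUE-corners process.

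For the first part, I identify a configuration on $\Lambda_{\{0,1\}}=\Lambda\times\{0,1\}$ with a pair of configurations on $\Lambda$, one for each mark. The map that sends a marked configuration to the configuration of its mark-$1$ points is continuous in the vague topology. Indeed, for a compactly supported continuous test function $f$ on $\Lambda$, the function $f\otimes\delta_{1}$, namely $(t,\mu,j)\mapsto f(t,\mu)\one_{j=1}$, is compactly supported and continuous on $\Lambda_{\{0,1\}}$, because $\{0,1\}$ is discrete. The continuous mapping theorem, applied to the weak convergence of Theorem~\ref{thm:1}, then shows that the rescaled red (mark $1$) particles converge weakly to the mark-$1$ part of $(\xi^t_{s,j})$.

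A more hands-on alternative is to work with correlation functions. The $n$-point correlation functions of the restricted process are those of the marked process evaluated at $j_1=\dots=j_n=1$. By Theorem~\ref{thm:0}, these are determinants of $\theta(t_k)K_{\operatorname{GUE}}$ after the gauge and scaling. The gauge factors $g$ cancel in the determinant by conjugation, and the uniform bounds on compacts give convergence of correlation functions in the same sense as in Theorem~\ref{thm:1}.

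For the second part, take the limit $\PP^\theta_{\operatorname{GUE}}$ and keep only the points with $m=1$. Since the marks are independent Bernoulli$(\theta(t))$ variables, each point of the GUE-corners process is retained independently with probability $\theta(t)$, that is, removed with probability $1-\theta(t)$. By definition this is a thinned GUE-corners process. Its kernel is $\theta(t_1)K_{\operatorname{GUE}}(t_1,\mu_1;t_2,\mu_2)$, which is the standard thinning kernel of \cite{BP04} and agrees with \eqref{eq:intro:marked_kernel} at $j_1=1$.

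The only delicate point is the parametrisation: the mark-$1$ points are those kept with probability $\theta$. The phrase ``deletion probability~$\theta$'' in the statement should therefore be read with the convention of \cite{BP04}, in which the parameter attached to a thinning is the retention factor multiplying the kernel. Equivalently, one can restrict to $j=0$ instead, for which the deletion probability is literally $\theta$. I expect the main, and fairly minor, obstacle to be keeping this convention straight and checking continuity of the restriction map; everything else is inherited from Theorem~\ref{thm:1}.
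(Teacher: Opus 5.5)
Your proposal is correct and follows the paper's route. The paper likewise obtains the corollary as an immediate consequence of the weak convergence of the marked process (Corollary~\ref{cor:week_converence}, which you may apply to test functions of the form $\varphi(t,\mu)\one_{j=1}$), combined with the observation attributed to \cite{CG23} that the mark-$1$ points of $\PP_{\operatorname{GUE}}^\theta$ form an independent thinning of the GUE-corners process.

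Your caveat about the parametrisation is also justified. Since $m\sim\operatorname{Bernoulli}(\theta)$ and $\nu(t,1)=\theta(t)$, the mark-$1$ points are retained with probability $\theta$ and therefore deleted with probability $1-\theta$. The paper's wording ``deletion probability $\theta$'' for $j=1$, both in the statement and in the paragraph preceding it, has retention and deletion swapped; the statement holds literally for $j=0$, or for $j=1$ with deletion probability $1-\theta$.
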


We stress that the limit in Theorem~\ref{thm:1} is subtle. If one identifies the two colors (equivalently, forgets the marks), one recovers the classical GUE-corners process (see Remark~\ref{rmk:forget_marks} below). In our framework, however, the marks arise naturally, as seen from Theorem~\ref{thm:0}. 
While this manuscript was being prepared, the paper \cite{RR25} appeared. In that work, the authors analyse the same limiting regime in the special case of the two-periodic Aztec diamond (in our notation $\ell=2$, and $\alpha_1^{-1}=\beta_1^{-1}=\alpha_2=\beta_2=a$ for some $a>0$) using a completely different method. However, it appears that their method does not capture this refined limit; instead, the classical (unmarked) GUE-corners process is obtained.

\subsection{Outlook on further developments} 
Let us briefly specialize to the two-periodic Aztec diamond. This model depends on a single parameter $a$; in our notation $\ell=2$ and
$\alpha_1^{-1}=\beta_1^{-1}=\alpha_2=\beta_2=a$.
In this case the marking function $\theta$ simplifies to
\begin{equation}
\theta(t,\mu)=
\begin{cases}
\frac{a^2}{1+a^2}, & t \quad \text{odd,} \\
\frac{a^{-2}}{1+a^{-2}}, & t \quad \text{even.} \\
\end{cases}
\end{equation} 
Thus, as $a\to 0$ or $a\to\infty$, we have $\theta(t,\mu)\to 0$ or $1$ (depending on the parity of $t$), so the marks become asymptotically deterministic. In particular, after forgetting the now-trivial marks, one recovers the classical GUE-corners process in this limit.

At the same time, the global geometry changes as $a\to 0$ or $a\to\infty$: the rough region shrinks away, while the smooth phase turns into a tilted square that meets the frozen regions. In particular, the smooth phase reaches the turning point, and the local configuration there appears to freeze.

These observations suggest a natural two-parameter asymptotic question: let the weights vary with the size $N$ and consider a scaling limit in which $a=a(N)\to\infty$. If $a(N)$ diverges sufficiently slowly compared with $N$, one should see the GUE-corners process. At the opposite extreme, if $a(N)$ diverges very rapidly, the particles should freeze and the configuration becomes essentially deterministic. 

Understanding the intermediate regime between these extremes would be particularly interesting. It is tempting to speculate that an appropriate interpolation limit might be related to the $\infty$-corners process studied in \cite{GK24,GM20}. While an intermediate scaling regime has been analyzed along the frozen-smooth interface in \cite{JM23}, the corresponding behavior at the turning point appears to remain open.

One can attempt to extend the above discussion to more general periodic weights. In \cite{BB24}, a temperature parameter is introduced and the corresponding zero-temperature limit is analyzed. In that limit, the limit shape converges to a piecewise linear profile, and--much as in the two-periodic Aztec diamond--the rough region vanishes and the smooth phase expands so as to reach the turning points. This raises a natural question: in such a joint scaling regime (with $N\to\infty$ and temperature tending to zero), to what extent is the turning-point behavior universal, and to what extent does it depend on the specific choice of weights?

\paragraph{Outline of the paper.}
The necessary background for this work, including a double-contour integral expression for the inverse Kasteleyn matrix obtained in \cite{Ber21} (see Theorem \ref{thm:correlation_kernel}), is introduced in Section \ref{section:prelim}. We state and prove our main results in Section \ref{sec:main}, deferring the technical analysis of the double-contour integral to Section \ref{sec:steepest}. The explicit computation of the parameters $\tau$ and $\sigma$ appearing in Theorem \ref{thm:1} is then given in Section \ref{sec:ty_sigma}.

\paragraph{Acknowledgements}
We are grateful to Maurice Duits for all the valuable discussions throughout this project.
Nedialko Bradinoff was supported by the European Research Council (ERC), Grant Agreement No. 101002013.

\section{Preliminaries}\label{section:prelim}
We discuss in more detail the necessary background and context of this work. 
\subsection{The Aztec diamond dimer model}
In this section we define the Aztec diamond dimer model.

The Aztec diamond graph~$G_\text{Az}=(\mathcal B_\text{Az}, \mathcal W_\text{Az}, \mathcal E_\text{Az})$ of size~$N$ is a bibartite graph defined from a subset of the tilted square lattice. More precisely, set
\begin{equation}
\mathcal B_\text{Az}=\{(2i,2j+1): i\in \{0,\dots,N\}, \, j\in\{0,\dots,N-1\}\}, 
\end{equation}
and 
\begin{equation}
\mathcal W_\text{Az}=\{(2i+1,2j): i\in \{0,\dots,N-1\}, \, j\in\{0,\dots,N\}\}.
\end{equation}
The set of vertices of~$G_\text{Az}$ is the union~$\mathcal B_\text{Az}\cup \mathcal W_\text{Az}$, where we call~$\mathcal B_\text{Az}$ \emph{black vertices} and~$\mathcal W_\text{Az}$ white vertices. The edges~$\mathcal E_\text{Az}$ consists of the union of four type of edges,  \emph{north}, \emph{east}, \emph{south}, and \emph{west} edges:
\begin{align*}
\text{north}& =\{((2i-1,2j),(2i,2j-1)):(2i-1,2j)\in \mathcal W_{\text{Az}},(2i,2j-1))\in \mathcal B_{\text{Az}}\}, \\
\text{east}& =\{((2i-1,2j),(2i,2j+1)):(2i-1,2j)\in \mathcal W_{\text{Az}},(2i,2j+1))\in \mathcal B_{\text{Az}}\}, \\
\text{south}& =\{((2i+1,2j),(2i,2j+1)):(2i+1,2j)\in \mathcal W_{\text{Az}},(2i,2j+1))\in \mathcal B_{\text{Az}}\}, \\
\text{west}& =\{((2i+1,2j),(2i,2j-1)):(2i+1,2j)\in \mathcal W_{\text{Az}},(2i,2j-1))\in \mathcal B_{\text{Az}}\}.
\end{align*} 

To introduce the dimer model, we introduce edge weights~$\alpha_i,\beta_i>0$ for~$i\in \ZZ$, and define the weight function~$w:\mathcal E_\text{Az}\to \RR_{>0}$ as follows: Let~$e_\text{south}=((2i+1,2j),(2i,2j+1))\in \mathcal E_\text{Az}$ and~$e_\text{east}=((2i-1,2j),(2i,2j+1))\in \mathcal E_\text{Az}$ be a south and east edge. For~$e\in \mathcal E_\text{Az}$, we set
\begin{equation}\label{eqn:edge_weights}
w(e)=
\begin{cases}
\alpha_{i+1}^{-1}, & e=e_\text{south}, \, j \text{ even}, \\
\alpha_{i+1}, & e=e_\text{south}, \, j \text{ odd}, \\
\beta_i^{-1}, & e=e_\text{east}, \, j \text{ even}, \\
\beta_i, & e=e_\text{east}, \, j \text{ odd}, \\
1, & \text{otherwise}.
\end{cases}
\end{equation}
By definition, the weight function~$w$ is~$2$-periodic in the vertical direction, that is, $2$-periodic in the $j$ coordinate. We will also assume it is periodic in the horizontal direction, that is, periodic in the $i$ coordinate, let say with period~$\ell$. We therefore assume that~$\alpha_{i+\ell}=\alpha_i$ and~$\beta_{i+\ell}=\beta_i$ for all~$i$. In addition, following \cite{Ber21}, we will assume that~$\prod_{i=1}^\ell \alpha_i=\prod_{i=1}^\ell \beta_i$. For simplicity, we also impose the generic assumption that the associated spectral curve has maximum genus, that is,~$g=\ell-1$, see Section~\ref{sec:kernel}.

Given the graph~$G_\text{Az}$ and the weight function~$w$, we define a probability measure on all \emph{dimer covers} of the Aztec diamond. Recall that a dimer cover~$\mathcal D$ of~$G_\text{Az}$ is a subset of~$\mathcal E_\text{Az}$ such that each vertex in~$\mathcal B_\text{az}$ and~$\mathcal W_\text{Az}$ is adjacent to exactly one edge in~$\mathcal D$. An element in~$\mathcal D$ is called a \emph{dimer}. We consider here the Boltzmann measure,
\begin{equation}\label{eq:dimer_measure}
\PP_{\operatorname{dimer}}(\mathcal D)=\frac{1}{Z}\prod_{d\in \mathcal D} w(d),
\end{equation}
where~$Z=\sum_{\mathcal D}\prod_{d\in \mathcal D} w(d)$ is called the \emph{partition function} and the sum is over all possible dimer covers~$\mathcal D$ of~$G_\text{Az}$.

The edge inclusion probability, that is, the probability to see a specific set of dimers in a dimer cover can be expressed in terms of the \emph{Kasteleyn matrix}~$K_{\operatorname{Az}}$ and its inverse. To define the Kasteleyn matrix, we introduce the \emph{Kasteleyn sign}~$\sigma:\mathcal E_\text{Az}\to \{-1,1\}$ by letting~$\sigma(e)=-1$ if~$e$ is a north edge and~$\sigma(e)=1$ otherwise. In general, the Kasteleyn sign can be chosen to take values on the unit circle with the condition that the alternating product around each face of the graph is~$(-1)^{k+1}$, where~$2k$ is the number of edges around the face. The Kasteleyn matrix~$K_{\operatorname{Az}}$ is essentially a weighted adjacency matrix:~$K_{\operatorname{Az}}:\CC^{\mathcal B_\text{Az}}\to \CC^{\mathcal W_\text{Az}}$, defined as
\begin{equation}\label{eq:kasteleyn}
K_{\operatorname{Az}}(\mathrm w,\mathrm b)=\one_{\mathrm w\mathrm b\in \mathcal E_{\text{Az}}}\sigma(\mathrm w\mathrm b)w(\mathrm w\mathrm b),
\end{equation}
where~$\mathrm w\in \mathcal W_\text{Az}$ and~$\mathrm b\in \mathcal B_\text{Az}$. The expression on the right-hand side should be interpreted as zero if~$\mathrm w\mathrm b$ is not an edge. The Kasteleyn signs are defined so that~$Z=|\det K_{\text{Az}}|$~\cite{Kas61, TF61}, and it follows~\cite{Ken97} that for~$k$ edges~$\mathrm b_p\mathrm w_p\in \mathcal E_\text{Az}$,~$p=1,\dots,k$,
\begin{equation}\label{eq:kenyons_formula}
\PP_{\operatorname{dimer}}\left(\mathrm b_p \mathrm w_p\in \mathcal D\text{ for all }p=1,\dots,k\right)=\det\left(K_\text{Az}(\mathrm w_{p'},\mathrm b_{p'})K_\text{Az}^{-1}(\mathrm b_p,\mathrm w_{p'})\right)_{p,p'=1}^k.
\end{equation}
In general it is a hard problem to obtain a suitable expression for the inverse Kasteleyn matrix. In the setting considered here, the inverse Kasteleyn matrix, or rather, the correlation kernel for the associated non-intersecting path model, was obtained in~\cite{Ber21} using the technique developed in~\cite{BD19}.

\subsection{An interlacing particle system}\label{sec:interlacing_particles}

The dimer model discussed in the previous section is a \emph{determinantal point process}, where the dimers are viewed as points and the \emph{correlation kernel} can be expressed in terms of the \emph{Kasteleyn matrix} and its inverse. In this paper, we are interested in a closely related point process. Namely, given a dimer cover~$\mathcal D$ of~$G_\text{Az}$, we define a point configuration by putting a point, or particle, at a black vertex~$\mathrm b\in \mathcal B_\text{Az}$ if~$\mathrm b$ is adjacent to a south or west edge in~$\mathcal D$. See Figure~\ref{fig:interlacing}. The probability measure~$\PP_{\operatorname{dimer}}$ in~\eqref{eq:dimer_measure} induces a probability measure~$\PP_{\operatorname{Int}}$ on the set of point configurations defined above.

We say that a black vertex is on the~$2i$th \emph{level} if its first coordinate is~$2i$. Similarly, we say that a white vertex with first coordinate~$2i+1$ is on level~$2i+1$.
\begin{lemma}\label{lem:particles_level}
Fix a dimer cover~$\mathcal D$ of~$G_\text{Az}$ and~$i\in \{0,\dots,N\}$. The black vertices on level~$2i$ are adjacent to~$N-i$ south or west dimers.   
\end{lemma}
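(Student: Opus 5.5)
A counting argument that splits the graph along the vertical lines through the white levels should suffice. Black vertices on level $2i$ are $(2i,2j+1)$ for $j=0,\dots,N-1$, so there are $N$ of them. Each is matched to exactly one white vertex: via a north or east edge to level $2i-1$ (when $i\geq 1$), or via a south or west edge to level $2i+1$ (when $i\leq N-1$). Hence the claim is equivalent to saying that exactly $i$ black vertices on level $2i$ are matched to level $2i-1$. I will prove this by counting vertices to the left of a vertical cut.

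Fix $i$ and consider the set $L_i$ of vertices with first coordinate at most $2i-1$, i.e.\ all vertices on levels $0,1,\dots,2i-1$. It contains $i$ black levels ($0,2,\dots,2i-2$) with $N$ vertices each, so $iN$ black vertices. It contains $i$ white levels ($1,3,\dots,2i-1$) with $N+1$ vertices each, so $i(N+1)$ white vertices.

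Every edge joins vertices whose first coordinates differ by exactly $1$. Therefore the only edges leaving $L_i$ join level $2i-1$ to level $2i$. In the dimer cover $\mathcal D$, each black vertex of $L_i$ is matched inside $L_i$, since its neighbors lie on adjacent odd levels, all of which are at most $2i-1$. Each white vertex of $L_i$ is matched either inside $L_i$ or across the cut to level $2i$. The black vertices of $L_i$ use up $iN$ white vertices of $L_i$. This leaves exactly $i(N+1)-iN=i$ white vertices of $L_i$, necessarily on level $2i-1$, matched across the cut to black vertices on level $2i$. The edges between levels $2i-1$ and $2i$ are precisely north or east edges from the black vertex's perspective, since the white vertex has first coordinate $2i-1$.

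So exactly $i$ black vertices on level $2i$ carry north or east dimers, and the remaining $N-i$ carry south or west dimers. The edge cases $i=0$ (no vertices to the left, so all $N$ vertices use south or west edges) and $i=N$ (no level $2N+1$, consistent with $N-N=0$) are covered by the same count. The only care needed is matching the edge-type definitions: south and west edges go to $(2i+1,\cdot)$, north and east edges to $(2i-1,\cdot)$, which is immediate from the definitions. I expect no real obstacle.
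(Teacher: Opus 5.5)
Your proof is correct and rests on the same counting idea as the paper: every vertex is matched, each black level has $N$ vertices while each white level has $N+1$, and edges only join adjacent levels. The paper runs this as an induction across one white level at a time: if $k$ black vertices on level $2i$ carry south or west dimers, then $k-1$ do on level $2(i+1)$, starting from $N$ on level $0$. Your cut argument instead counts everything to the left of level $2i$ at once, which is just the summed-up form of that recursion.
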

\begin{proof}
On each even level, there are~$N$ black vertices and on each odd level, there are~$N+1$ white vertices. If there are~$k$ black vertices on level~$2i$ that are adjacent to a south or west dimer, it means that there have to be~$N+1-k$ black vertices on level~$2(i+1)$ adjacent to a north or east dimer, since all white vertices on level~$2i+1$ have to be adjacent to some dimer. Consequently, there are~$k-1$ black vertices adjacent to a south or west dimer on level~$2(i+1)$. Since there are~$N$ vertices on level~$0$ that are adjacent to a south or west dimer, we have proved the statement.
\end{proof}
The previous lemma shows that the point process introduced in the beginning of this section can naturally be denoted by~$\{u_s^t\}_{1\leq s\leq t\leq N}$ where~$u_s^t\leq u_{s+1}^t$ and lies on level~$N-t$. In particular, the coordinate of the black vertex associated with~$u_s^t$ is~$(2(N-t),2u_s^t+1)$.
\begin{lemma}[\cite{JN06}]
The point process~$\{u_s^t\}_{1\leq s\leq t\leq N}$ is an interlacing particle system, that is,~$u_s^{t+1}\leq u_s^t\leq u_{s+1}^{t+1}$.
\end{lemma}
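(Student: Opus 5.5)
Write $b^t_s=(2(N-t),2u^t_s+1)$ for the black vertex carrying $u_s^t$. The plan is to prove the interlacing $u_s^{t+1}\leq u_s^t\leq u_{s+1}^{t+1}$ by a local counting argument on the odd level between two consecutive even levels, refining the proof of Lemma~\ref{lem:particles_level}.

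\textbf{Setup.} Fix $t$ and consider:
\begin{itemize}
\item the black level $2i$ with $i=N-t$, which carries the $t$ particles $u^t_s$;
\item the white level $2i+1$;
\item the black level $2i+2$, which carries the $t-1$ particles $u^{t-1}_s$.
\end{itemize}
(Equivalently one may index so that level $2i$ carries $u^{t+1}$ and level $2i+2$ carries $u^t$.) A white vertex $(2i+1,2j)$ is adjacent to four black vertices:
\begin{itemize}
\item $(2i,2j+1)$ via a south edge and $(2i,2j-1)$ via a west edge, both on level $2i$;
\item $(2i+2,2j-1)$ via a north edge and $(2i+2,2j+1)$ via an east edge, both on level $2i+2$.
\end{itemize}

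\textbf{Encoding the matching.} Each white vertex on level $2i+1$ is matched either to the left (south/west) or to the right (north/east). The black vertices on level $2i$ matched to the right (to level $2i+1$) are exactly the particles there. The black vertices on level $2i+2$ matched to the left (to level $2i+1$) are exactly the non-particles there.

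This yields a sequence along $j=0,\dots,N$ of white vertices, each labelled ``left'' or ``right''. A left-matched white vertex at height $2j$ consumes a particle on level $2i$ at $j$ or $j-1$. A right-matched white vertex consumes a hole on level $2i+2$ at $j-1$ or $j$.

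\textbf{Key step.} Go along the column from bottom to top and show that between two consecutive particles $u^{t}_s<u^{t}_{s+1}$ on level $2i$ there is exactly one particle of level $2i+2$, with weak inequalities at the endpoints. The argument is a pigeonhole count.
\begin{itemize}
\item Consider the white vertices with $j$ in the window strictly between the two particles.
\item Each hole of level $2i$ in that window is matched to a white vertex in the window.
\item The remaining white vertices must be right-matched, using holes on level $2i+2$.
\item Counting the $N+1$ white vertices against the $N$ black vertices on each side, exactly as in Lemma~\ref{lem:particles_level}, forces precisely one particle on level $2i+2$ in the closed window.
\end{itemize}
Equivalently, one can use the standard domino/height-function description of \cite{Joh05a,JN06}: the particles on consecutive levels are obtained by a shuffling-type correspondence, which gives interlacing directly.

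\textbf{Main obstacle.} The main obstacle is the boundary bookkeeping: the correct weak versus strict inequalities, the shift by $1$ coming from the coordinates $2u+1$, and the edge columns $j=0$ and $j=N$. I would handle these by a careful check of the two extreme white vertices, which can only match to one side each.
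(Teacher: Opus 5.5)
Your encoding of the matching on the white level $2i+1$ is correct, but the key step fails as written. The bullet claiming that each hole of level $2i$ in the window is matched to a white vertex in the window contradicts that very encoding: holes of level $2i$ are matched to level $2i-1$ and never use whites on level $2i+1$. In fact, a white vertex at height $j$ with $u^t_s+2\leq j\leq u^t_{s+1}-1$ has no adjacent particle of level $2i$, so it must be right-matched.

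More seriously, your target conclusion --- precisely one particle of level $2i+2$ in the closed window $[u^t_s,u^t_{s+1}]$ --- is false. Take the dimer cover with $u^t=\{0,1,\dots,t-1\}$ for every $t$ (south dimers below, north dimers above). For $t\geq 3$, the window $[u^t_1,u^t_2]=[0,1]$ contains both $u^{t-1}_1=0$ and $u^{t-1}_2=1$. For the interlacing pair $u^t=(0,2,4,6)$, $u^{t-1}=(2,3,4)$, the window $[2,4]$ contains three. So no pigeonhole confined to one window can give the lemma. What has to be shown is that the $s$-th particle $u^{t-1}_s$ lies in the $s$-th window, and that requires knowing how many particles of level $2i+2$ lie below the window. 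Appealing to ``counting exactly as in Lemma~\ref{lem:particles_level}'' does not supply this, since that argument only compares totals over entire levels. Likewise, the shuffling alternative is a citation, not an argument.

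The missing idea is the cumulative version of that count, which is exactly what the paper uses (``there are either $k$ or $k-1$ particles at or below $b'$''). For each $m$, the $m$ white vertices of level $2i+1$ at heights $0,\dots,m-1$ can only be matched to particles of level $2i$ or holes of level $2i+2$ at heights at most $m-1$. Those black vertices, in turn, can only use whites at heights at most $m$. Since each black level has exactly $m$ vertices below height $m$, this gives
\[
\#\{s:u^t_s<m\}-\#\{s:u^{t-1}_s<m\}\in\{0,1\}\qquad\text{for all } m .
\]
Taking $m=u^{t-1}_s$ yields $u^{t-1}_s\leq u^t_{s+1}$, and taking $m=u^{t-1}_s+1$ yields $u^t_s\leq u^{t-1}_s$. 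The extreme columns need no separate bookkeeping.
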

\begin{proof}
Pick a black vertex~$b=(2i,2j+1)$ and let~$b'=(2(i+1),2j+1)$ be the black vertex immediately to its right. Suppose there are~$k$ particles whose vertical coordinate is less than or equal to that of~$b$. By an argument analogous to that in the proof of Lemma~\ref{lem:particles_level}, there are either~$k$ or~$k-1$ particles whose vertical coordinate is less than or equal to that of~$b'$. Since this holds for all black vertices, it follows that if there are~$k$ particles at or below~$b'$, then there must be a particle located at~$b'$.

Let~$b$ correspond to the black vertex associated with~$u_s^{t+1}$, and~$b'$ to the black vertex immediately to its right. If there are~$k$ particles at or below~$b$ and also~$k$ particles at or below~$b'$, then a particle occupies~$b'$, implying~$u_s^{t+1}=u_s^t<u_{s+1}^{t+1}$. If instead there are~$k-1$ particles at or below~$b'$, then~$u_s^{t+1}<u_s^t$, and moreover, we must have~$u_s^t\leq u_{s+1}^{t+1}$ to preserve the property established at the beginning of the proof.
\end{proof}

To capture the periodicity in the underlying dimer model, we color the particles of the point process~$\{u_s^t\}_{1\leq s\leq t}$ in two different colors depending on the location, see Definition~\ref{def:marked_interlacing} below.

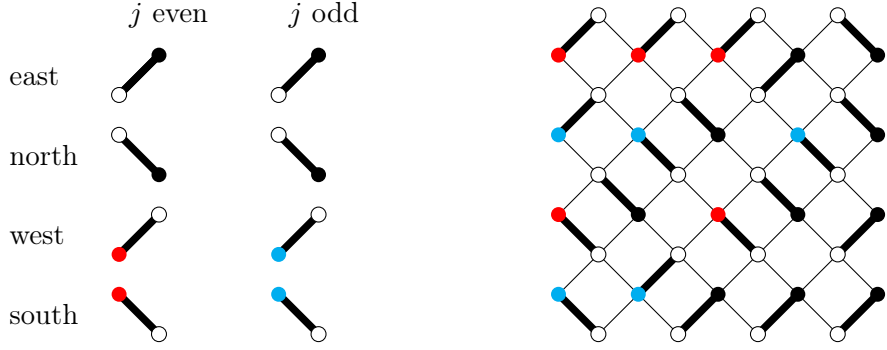
\begin{figure}
\begin{center}
\begin{tikzpicture}[scale=.75, rotate=-45]

\draw[right] (-4,1) node{$j$ even};   
\draw[right] (-2,3) node{$j$ odd};   

\draw[above right] (-4.5,-1.5) node{east};   
\foreach \x/\y in {-3.5/-.5,-1.5/1.5}
{
\draw[line width = 1mm] (\x+.5,\y+.5)--(\x+.5,\y+1.5);
\draw[line width = 1mm] (\x+.5,\y+.5)--(\x+.5,\y+1.5);
\draw (\x+.5,\y+.5) node[circle,draw=black,fill=white,inner sep=2pt]{};
\draw (\x+.5,\y+1.5) node[circle,fill,inner sep=2pt]{};
}

\draw[below right] (-4,-2) node{north};   
\foreach \x/\y in {-3/-1,-1/1}
{
\draw[line width = 1mm] (\x+.5,\y+.5)--(\x+1.5,\y+0.5);
\draw (\x+.5,\y+.5) node[circle,draw=black,fill=white,inner sep=2pt]{};
\draw (\x+1.5,\y+.5) node[circle,fill,inner sep=2pt]{};
}

\draw[above right] (-2.5,-3.5) node{west};   
\foreach \x/\y in {-1.5/-2.5}
{ 
\draw[line width = 1mm] (\x+.5,\y+.5)--(\x+.5,\y+1.5);
\draw (\x+.5,\y+.5) node[circle,fill=red,inner sep=2pt]{};
\draw (\x+.5,\y+1.5) node[circle,draw=black,fill=white,inner sep=2pt]{};
}

\foreach \x/\y in {.5/-.5}
{ 
\draw[line width = 1mm] (\x+.5,\y+.5)--(\x+.5,\y+1.5);
\draw (\x+.5,\y+.5) node[circle,fill=cyan,inner sep=2pt]{};  
\draw (\x+.5,\y+1.5) node[circle,draw=black,fill=white,inner sep=2pt]{};
}

\draw[below right] (-2,-4) node{south};   
\foreach \x/\y in {-1/-3}
{
\draw[line width = 1mm] (\x+.5,\y+.5)--(\x+1.5,\y+0.5);
\draw (\x+.5,\y+.5) node[circle,fill=red,inner sep=2pt]{};
\draw (\x+1.5,\y+.5) node[circle,draw=black,fill=white,inner sep=2pt]{};
}

\foreach \x/\y in {1/-1}
{
\draw[line width = 1mm] (\x+.5,\y+.5)--(\x+1.5,\y+0.5);
\draw (\x+.5,\y+.5) node[circle,fill=cyan,inner sep=2pt]{};  
\draw (\x+1.5,\y+.5) node[circle,draw=black,fill=white,inner sep=2pt]{};
}

\end{tikzpicture}
\qquad \qquad \qquad 
\begin{tikzpicture}[scale=.75, rotate=-45]

\foreach \x in {0,1,2,3}
\foreach \y in {0,...,3}
{\draw (-3.5+\x+\y,.5-\x+\y) rectangle (-2.5+\x+\y,1.5-\x+\y);
}

\foreach \x/\y in {1/-2,2/-1,3/0,2/1,-1/2}
{
\draw[line width = 1mm] (\x+.5,\y+.5)--(\x+.5,\y+1.5);
\draw (\x+.5,\y+.5) node[circle,draw=black,fill=white,inner sep=2pt]{};
\draw (\x+.5,\y+1.5) node[circle,fill,inner sep=2pt]{};
}

\foreach \x/\y in {-2/1,0/1,0/3,-1/4,-2/-1}
{
\draw[line width = 1mm] (\x+.5,\y+.5)--(\x+1.5,\y+0.5);
\draw (\x+.5,\y+.5) node[circle,draw=black,fill=white,inner sep=2pt]{};
\draw (\x+1.5,\y+.5) node[circle,fill,inner sep=2pt]{};
}

\foreach \x/\y in {-4/0,-3/1,-2/2}
{ 
\draw[line width = 1mm] (\x+.5,\y+.5)--(\x+.5,\y+1.5);
\draw (\x+.5,\y+.5) node[circle,fill=red,inner sep=2pt]{};
\draw (\x+.5,\y+1.5) node[circle,draw=black,fill=white,inner sep=2pt]{};
}

\foreach \x/\y in {-3/-1,0/-2}
{ 
\draw[line width = 1mm] (\x+.5,\y+.5)--(\x+.5,\y+1.5);
\draw (\x+.5,\y+.5) node[circle,fill=cyan,inner sep=2pt]{};  
\draw (\x+.5,\y+1.5) node[circle,draw=black,fill=white,inner sep=2pt]{};
}

\foreach \x/\y in {-2/-2,0/0}
{
\draw[line width = 1mm] (\x+.5,\y+.5)--(\x+1.5,\y+0.5);
\draw (\x+.5,\y+.5) node[circle,fill=red,inner sep=2pt]{};
\draw (\x+1.5,\y+.5) node[circle,draw=black,fill=white,inner sep=2pt]{};
}

\foreach \x/\y in {-1/-3,0/2,-2/0}
{
\draw[line width = 1mm] (\x+.5,\y+.5)--(\x+1.5,\y+0.5);
\draw (\x+.5,\y+.5) node[circle,fill=cyan,inner sep=2pt]{};  
\draw (\x+1.5,\y+.5) node[circle,draw=black,fill=white,inner sep=2pt]{};
}

 
\end{tikzpicture}
\end{center}
\caption{Left: The particles defined from the dimers visualized in red and cyan. The color depends on the parity of the coordinates of the particle. Right: A dimer cover of the Aztec diamond of size~$4$. The particles form an interlacing particle system. \label{fig:interlacing}}
\end{figure}

\subsection{The correlation kernel and the spectral curve}\label{sec:kernel}

In the uniform case,~$\alpha_i=\beta_i=1$, for all~$i$, it was proved in~\cite{Joh05a} that the point process~$\{u_s^t\}_{1\leq s\leq t}$ is a determinantal point process with correlation kernel determined from non-intersecting paths known as the DR-paths. More precisely, the point process is the restriction of the point process defined from the non-intersecting paths to the even levels. In our setting, the same argument holds and the correlation kernel for the point process~$\{u_s^t\}_{1\leq s\leq t}$ is the restriction of the correlation kernel given in~\cite{Ber21} restricted to the even levels. 

To express the correlation kernel we define the following~$2\times 2$ matrices:
\begin{equation}\label{eq:transition_matrices}
\phi_{2i-1}(z)=
\begin{pmatrix}
1 & \alpha_i^{-1} z^{-1} \\
\alpha_i & 1
\end{pmatrix},
\quad \text{and} \quad
\phi_{2i}(z)=\frac{1}{1-z^{-1}}
\begin{pmatrix}
1 & \beta_i^{-1} z^{-1} \\
\beta_i & 1
\end{pmatrix}, 
\end{equation}
for~$i=1,\dots,\ell$. We denote the product of these matrices by~$\Phi=\prod_{m=1}^{2\ell}\phi_m$. Note that~$\det \Phi(z)=1$. 

The correlation kernel is naturally described as a double contour integral on a higher genus Riemann surface. We discuss this Riemann surface before we recall the integral formulation of the correlation kernel. 

The characteristic polynomial~$P$ of the dimer model is given by
\begin{equation}
P(z,w)=(1-z^{-1})^\ell \det(\Phi(z)-wI),
\end{equation}
and the spectral curve is defined by
\begin{equation}
\mathcal R^\circ=\{(z,w)\in (\CC^*)^2: P(z,w)=0\},
\end{equation}
where~$\CC^*=\CC\backslash\{0\}$. It was proved in~\cite{Ber21} that the characteristic polynomial defined here coincides with the characteristic polynomial defined in~\cite{KOS06}. The spectral curve introduced in~\cite{KOS06} was proved to be a so-called \emph{Harnack curve} -- a specifically nice type of curve. Since the two spectral curves coincide,~$\mathcal R^\circ$ is a Harnack curve. The spectral curve is naturally compactified in an appropriate toric surface, and we denote this compactification by~$\mathcal R$. Concretely,~$\mathcal R=\mathcal R^{\circ}\cup\{p_0,p_\infty,q_0,q_\infty\}$, where
\begin{equation}
p_0=(0,1), \quad p_\infty=(\infty,1), \quad q_0=(1,0), \quad \quad q_\infty=(1,\infty).
\end{equation}
This Riemann surface was described in~\cite{Ber21} by gluing together two copies of~$\CC$ along intervals of the negative part of the real line. We describe that construction below.
\begin{lemma}[\cite{Ber21}]\label{lem:zeros_disc}
Let~$\operatorname{Disc}_w(P)$ be the discriminant of the polynomial~$P$ in the variable~$w$, and let~$p(z)=z^{2\ell}\operatorname{Disc}_w(P)$. Then
\begin{equation}
p(z)=\left((z-1)^\ell \Tr \Phi(z)\right)^2-4(z-1)^{2\ell}.
\end{equation}
The function~$p$ is a degree~$2\ell-1$ polynomial with zeros
\begin{equation}
0=z_0>z_1\geq z_2>z_3\geq z_4>\dots >z_{2\ell-3}\geq z_{2\ell-2}.
\end{equation}
\end{lemma}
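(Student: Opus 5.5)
The plan is to compute the discriminant directly, read off the degree and the root at $0$ from a path expansion of $\Tr\Phi$, and then locate the remaining zeros using the Harnack property of $\mathcal R^\circ$.

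\emph{The formula for $p$.} Since $\det\Phi(z)=1$, the characteristic polynomial of $\Phi$ is $w^2-\Tr\Phi(z)\,w+1$. Hence
\begin{equation}
P(z,w)=a(z)\bigl(w^2-\Tr\Phi(z)\,w+1\bigr),\qquad a(z)=(1-z^{-1})^\ell .
\end{equation}
The discriminant in $w$ is therefore $a^2\bigl((\Tr\Phi)^2-4\bigr)$. Multiplying by $z^{2\ell}$ and using $z^{\ell}a(z)=(z-1)^\ell$ gives
\begin{equation}
p(z)=\bigl((z-1)^\ell\Tr\Phi(z)\bigr)^2-4(z-1)^{2\ell}.
\end{equation}

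\emph{Polynomiality and degree.} Write $q(z)=(z-1)^\ell\Tr\Phi(z)$. Then $q=z^\ell\,\Tr\prod_m M_m$, where $M_{2i-1}=\begin{pmatrix}1&\alpha_i^{-1}z^{-1}\\ \alpha_i&1\end{pmatrix}$ and $M_{2i}=\begin{pmatrix}1&\beta_i^{-1}z^{-1}\\ \beta_i&1\end{pmatrix}$.
\begin{itemize}
\item Expand the trace as a sum over closed paths of length $2\ell$ in the two states $\{0,1\}$. Each $0\to1$ step carries one factor $z^{-1}$, each $1\to0$ step carries none, and a closed path has equally many steps of each kind. A path with $k$ steps $0\to1$ therefore contributes $z^{-k}$ with $0\le k\le \ell$, and its coefficient is a product of weights, hence positive.
\item So $q(z)=z^\ell T(z^{-1})$ with $T(u)=2+c_1u+\dots+c_\ell u^\ell$ and $c_k>0$, i.e. $q$ is a polynomial of degree $\ell$ with leading coefficient $2$.
\item The constant term $c_\ell$ comes from the two fully alternating paths. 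It equals $\prod_i\alpha_i^{-1}\beta_i+\prod_i\alpha_i\beta_i^{-1}=2$ by the constraint $\prod\alpha_i=\prod\beta_i$.
\item Consequently $p(0)=c_\ell^2-4=0$, which gives $z_0=0$.
\item In $q^2-4(z-1)^{2\ell}$ the $z^{2\ell}$ terms cancel. The $z^{2\ell-1}$ coefficient is $4c_1+8\ell>0$, so $\deg p=2\ell-1$.
\end{itemize}

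\emph{Location and interlacing of the zeros.} This is the main step. The zeros of $p$ in $\CC^*$ are exactly the branch points of the degree-two projection $(z,w)\mapsto z$ on $\mathcal R$.
\begin{itemize}
\item Since $\mathcal R^\circ$ is a Harnack curve with maximal genus $\ell-1$, its real part consists of one non-compact component and $\ell-1$ disjoint compact ovals.
\item For $z>0$ (after the diagonal conjugation making all entries of the $M_m$ positive), I would check that $\Tr\Phi$ stays on one side of $\pm2$. Then $w$ is real and distinct for $z>0$, and there are no branch points on the positive axis.
\item Each compact oval projects onto a closed interval $[z_{2k},z_{2k-1}]\subset(-\infty,0)$ whose endpoints are simple branch points, with the two values of $w$ real over its interior. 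Disjointness of the ovals, together with the two-to-one nature of the projection, forces these intervals to be disjoint; this gives the interlaced ordering.
\item The non-compact component contributes $z_0=0$ and one further branch point.
\item This produces $2(\ell-1)+1=2\ell-1$ real nonpositive zeros, which by the degree count are all the zeros of $p$.
\end{itemize}
The non-strict inequalities $z_{2k-1}\ge z_{2k}$ accommodate ovals degenerating to points; under the genus assumption they are strict.

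I expect the real-topology step to be the main obstacle: matching branch points with ovals, and excluding complex or positive branch points. I would first attempt it by combining the Harnack property (the amoeba map is at most two-to-one) with a sign-change count of $(\Tr\Phi)^2-4$ along $(-\infty,0)$. If that does not close, I would fall back on quoting the corresponding description of the surface in \cite{Ber21}.
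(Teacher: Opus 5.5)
Your derivation of the formula is the same as the paper's, which justifies it only by $\det\Phi=1$. For the degree and the configuration of the zeros the paper gives no argument and quotes \cite{Ber21}, right after recalling that $\mathcal R^\circ$ is a Harnack curve.

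What you add is correct. Your path expansion gives $q(0)=2$ from $\prod_i\alpha_i=\prod_i\beta_i$, hence $p(0)=0$, and the leading coefficient $4c_1+8\ell$. Your oval-projection-plus-counting argument is sound for a smooth curve.

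The step you left as ``I would check'' is immediate. For $z>0$ the matrix $A=\prod_m M_m=(A_{ij})$ already has positive entries, so no conjugation is needed. Then $(\Tr A)^2-4\det A=(A_{11}-A_{22})^2+4A_{12}A_{21}>0$, and after multiplying by $z^{2\ell}$ this reads $p(z)>0$.

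One wording fix: the ``further branch point'' on the outer component is $p_\infty$ over $z=\infty$, which is forced by the odd degree. It is not a finite zero of $p$; if it were, your count would give $2\ell$ zeros.

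The genuine gap is scope. You \emph{assume} a smooth curve of genus $\ell-1$ with $\ell-1$ nondegenerate ovals, and read the zero pattern off it. The lemma, however, is stated for all positive weights with $\prod_i\alpha_i=\prod_i\beta_i$; that is why it has the non-strict $z_{2k-1}\ge z_{2k}$. The paper also uses it in that generality, to conclude that the cuts never collapse and hence $g\le \ell-1$. Your argument runs in the opposite direction and gives nothing once ovals degenerate.

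This case is not vacuous. For uniform weights, $q(z)=(\sqrt z+1)^{2\ell}+(\sqrt z-1)^{2\ell}$, and $p(z)=4z\,r(z)^2$ with $r(z)=\sum_{j=0}^{\ell-1}\binom{2\ell}{2j+1}z^j$. Every compact oval has shrunk to a point: $z_{2k-1}=z_{2k}=-\tan^2(\pi k/(2\ell))$, and $p\le 0$ on $(-\infty,0)$. There are no ovals to project, and your fallback sign-change count of $(\Tr\Phi)^2-4$ cannot detect these double zeros either.

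To prove the statement as written you need an extra input. One option is that the spectral curve is a simple Harnack curve whose only singularities are isolated real double points \cite{KOS06}. Such a point is a double zero of $p$ with $p\le 0$ on both sides, i.e. a collapsed oval. A collapsed cut would instead be an even-order zero with $p\ge 0$ on both sides, i.e. a singular point through which two real branches pass, and this is excluded.

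Another option is to perturb to maximal-genus weights and use continuity of roots. That gives realness and the weak ordering, and $p'(0)=4c_{\ell-1}+8\ell>0$ keeps $z_0>z_1$. The strict inequalities $z_{2k}>z_{2k+1}$ would still need the same exclusion of non-isolated real singular points.

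Under the paper's standing assumption $g=\ell-1$, your proof is fine as it stands.
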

To see that~$p$ is the discriminant of~$P$, we use that~$\det \Phi(z)=1$. It follows from the definition of~$\mathcal R$, that the zeros of the polynomial~$p$ from the previous lemma are the branch points of the Riemann surface. The cuts along which we glue together the two copies of~$\CC$ are taken between~$z_{2k}$ and~$z_{2k+1}$ for~$k=0,\dots,\ell-1$, where we set~$z_{2\ell-1}=-\infty$. The real part of~$\mathcal R$ consists of~$\ell$ connected components~$A_k$,~$k=0,\dots,\ell-1$, that are known as \emph{ovals}. For~$k=1,\dots,\ell-1$, we set~$A_k=\{(z,w)\in \mathcal R:z_{2k-1}\geq z\geq z_{2k}\}$ and we refer to them as the \emph{compact ovals}. For~$k=0$, the oval is referred to as the \emph{non-compact oval} and is given by~$A_0=\{(z,w)\in \mathcal R:0\leq z\leq +\infty\}$. By Lemma~\ref{lem:zeros_disc}, the cuts cannot shrink to a point for any choice of positive edge weights, while the compact ovals can be points. This means that the genus $g$ of $\mathcal R$, is bounded above by $\ell-1$. For simplicity, we assume throughout the paper, that $g=\ell-1$. See Figure~\ref{fig:rs_sheets} for an illustration of the Riemann surface~$\mathcal R$.

 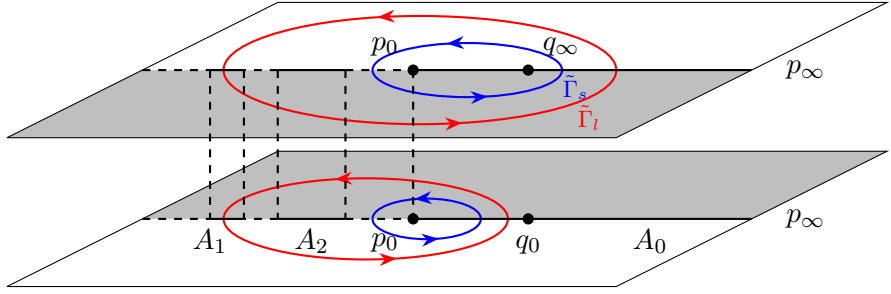
\begin{figure}[t]
 \begin{center}
 \begin{tikzpicture}[scale=.9]
   \tikzset{->--/.style={decoration={
  markings, mark=at position .3 with {\arrow{Stealth[length=2.3mm]}}},postaction={decorate}}}
   \tikzset{->-/.style={decoration={
  markings, mark=at position .6 with {\arrow{Stealth[length=2.3mm]}}},postaction={decorate}}}
   \fill[color=lightgray](-4,0)--(5,0)--(3,-1)--(-6,-1);
   \draw (-6,-1)--(3,-1)--(7,1)--(-2,1)--(-6,-1);
   \draw[blue,thick,->-] (-.6,0) arc(-180:0:1.4cm and .4cm);
   \draw[red,thick,->-] (-2.8,0) arc(-180:0:2.9cm and .8cm);   
   \draw[blue,thick,->-] (2.2,0) arc(0:180:1.4cm and .4cm);
   \draw[red,thick,->-] (3.,0) arc(0:180:2.9cm and .8cm);   
   \draw (0,0) node[circle,fill,inner sep=1.5pt,label=above left:$p_0$]{};
   \draw (5.8,0) node{$p_{\infty}$};
   \draw (1.7,0) node[circle,fill,inner sep=1.5pt,label=above right:$q_{\infty}$]{};
   \draw[blue] (2.4,-.25) node{\footnotesize{$\tilde \Gamma_s$}};
   \draw[red] (2.6,-.7) node{\footnotesize{$\tilde \Gamma_l$}};   
   \draw[thick] (5,0)--(0,0);
   \draw[thick] (-2,0)--(-1,0);
   \draw[thick] (-3,0)--(-2.5,0);
   \draw [thick,dashed] (-1,0)--(0,0);
   \draw [thick,dashed] (-2,0)--(-2.5,0);
   \draw [thick,dashed] (-4,0)--(-3,0);
   \fill[color=lightgray]
   (-2,-1.2)--(7,-1.2)--(5,-2.2)--(-4,-2.2);
   \draw (-6,-3.2)--(3,-3.2)--(7,-1.2)--(-2,-1.2)--(-6,-3.2);
   \draw[blue,thick,->-] (-.6,-2.2) arc(-180:0:.8cm and .3cm);
   \draw[red,thick,->-] (-2.8,-2.2) arc(-180:0:2.1cm and .6cm);   
   \draw[blue,thick,->-] (1.,-2.2) arc(0:180:.8cm and .3cm);
   \draw[red,thick,->-] (1.4,-2.2) arc(0:180:2.1cm and .6cm);   
   \draw (0,-2.2) node[circle,fill,inner sep=1.5pt,label=below left:$p_0$]{};
   \draw (5.8,-2.2) node{$p_{\infty}$};
   \draw (1.7,-2.2) node[circle,fill,inner sep=1.5pt,label=below:$q_0$]{};
   \draw (3.5,-2.2) node[below]{$A_0$};
   \draw (-3.,-2.2) node[below]{$A_1$};
   \draw (-1.5,-2.2) node[below]{$A_2$};
   \draw[thick] (0,-2.2)--(5,-2.2);
   \draw[thick] (-2.5,-2.2)--(-3,-2.2);
   \draw[thick] (-1,-2.2)--(-2,-2.2);
   \draw [thick,dashed] (-1,-2.2)--(0,-2.2);
   \draw [thick,dashed] (-2,-2.2)--(-2.5,-2.2);
   \draw [thick,dashed] (-4,-2.2)--(-3,-2.2);
   \draw[thick,dashed] (0,0)--(0,-2.2);
   \draw[thick,dashed] (-1,0)--(-1,-2.2);
   \draw[thick,dashed] (-2,0)--(-2,-2.2);
   \draw[thick,dashed] (-2.5,0)--(-2.5,-2.2);
   \draw[thick,dashed] (-3,0)--(-3,-2.2);
  \end{tikzpicture}
 \end{center}
  \caption{The Riemann surface $\mathcal R$ represented as two copies of the complex plane. The cuts (dashed) are located along the negative part of the real lines. The compact ovals $A_k$, $k=1,\dots,\ell-1$ (solid) are located along the negative part of the real lines, and the non-compact oval $A_0$ (solid) are located along the positive part of the real lines. The gray areas are connected via the cuts, and so are the white areas. In this illustration, the curve $\tilde \Gamma_s$ (blue) is a simple loop, while $\tilde \Gamma_l$ (red) is the union of two simple loops.\label{fig:rs_sheets}}
\end{figure}

For a simple closed curve~$\tilde \Gamma$ in~$\mathcal R$, we define the \emph{exterior} of~$\tilde \Gamma$ as the connected component of~$\mathcal R\backslash \tilde \Gamma$ that contains~$p_\infty=(\infty,1)$. The complement of the exterior is called the \emph{interior} of~$\tilde \Gamma$. The curve~$\tilde \Gamma$ is positively oriented if the interior is to the left of the curve. 

We are ready to express the correlation kernel for the point process~$\{u_s^t\}_{1\leq s\leq t\leq N}$. To simplify notation, we write
\begin{equation}\label{eq:adjuagate_matrix}
Q(z,w)=\frac{\adj(wI-\Phi(z))}{\partial_w\det(wI-\Phi(z))}.
\end{equation}
For simplicity, we will assume that the size of the Aztec diamond is~$2\ell N$ instead of~$N$. We also write~$(2(\ell x+i),2(2y+j)+1)$, where~$x=0\dots,2N-1$,~$i=0,\dots,\ell-1$,~$y=0,\dots,\ell N-1$, and~$j=0,1$, instead of~$(2i,2j+1)$, and identify these points with~$(\ell x+i,2y+j)$. Recall also that the coordinate of the black vertex associated with~$u_s^t$ is~$(\ell x+i,2y+j)=(2\ell N-t,u_s^t)$.
\begin{theorem}\label{thm:correlation_kernel}
Let~$\rho_k^{\operatorname{Int}}=\rho_{k,N}^{\operatorname{Int}}$ be the~$k$th correlation function associated to the point process~$\{u_s^t\}_{1\leq s\leq t\leq 2\ell N}$. For~$k$ black points~$(\ell x_p+i_p,2y_p+j_p)$,~$p=1,\dots,k$, where~$x_p=0\dots,2N-1$,~$i_p=0,\dots,\ell-1$,~$y_p=0,\dots,\ell N-1$, and~$j_p=0,1$, we have
\begin{multline}
\rho_k^{\operatorname{Int}}\left(\ell x_1+i_1,2y_1+j_1;\dots;\ell x_k+i_k,2y_k+j_k\right) \\
=\det\left(K_{\operatorname{Int}}\left(\ell x_p+i_p,2y_p+j_p;\ell x_{p'}+i_{p'},2y_{p'}+j_{p'}\right)\right)_{p,p'=1}^k,
\end{multline}
where
\begin{multline}\label{eq:correlation_kernel}
\left[K_{\operatorname{Int}}(\ell x+i,2y+j;\ell x'+i',2y'+j')\right]_{j',j=0}^1 \\
= -\frac{\one_{\ell x+i>\ell x'+i'}}{2\pi\i}\int_{\tilde \Gamma_l}\left(\prod_{m=1}^{2i'}\phi_m(z)\right)^{-1}Q(z,w)\prod_{m=1}^{2i}\phi_m(z)w^{x-x'}z^{y'-y}\frac{\d z}{z} \\
+\frac{1}{(2\pi\i)^2}\int_{\tilde \Gamma_s}\int_{\tilde \Gamma_l}\left(\prod_{m=1}^{2i'}\phi_m(z_1)\right)^{-1}Q(z_1,w_1)Q(z_2,w_2)\prod_{m=1}^{2i}\phi_m(z_2) \\
\times \frac{(z_2-1)^{\ell N}}{(z_1-1)^{\ell N}}\frac{w_2^{x-N}}{w_1^{x'-N}}\frac{z_1^{y'}}{z_2^y}\frac{\d z_2\d z_1}{z_2(z_2-z_1)}.
\end{multline}
The contours~$\tilde \Gamma_l$ and~$\tilde \Gamma_s$ are simple closed positively oriented curves in~$\mathcal R$ containing~$p_0$ and~$q_\infty$ in their interior, and~$p_\infty$ and~$q_0$ in their exterior. Moreover,~$\tilde \Gamma_s$ is contained in the interior of~$\tilde \Gamma_l$. 
\end{theorem}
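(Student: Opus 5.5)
The plan is to deduce the theorem from the non-intersecting path description of the Aztec diamond together with the double-contour formula of \cite{Ber21}, rather than computing $K_{\operatorname{Az}}^{-1}$ directly.

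\textbf{Step 1: a bijection with non-intersecting paths.} Following \cite{Joh05a}, I will map each dimer cover $\mathcal D$ of the Aztec diamond of size $2\ell N$ to a family of non-intersecting DR-paths on an auxiliary directed graph. The paths are chosen so that, on each even level $2(\ell x+i)$, the path positions are exactly the black vertices adjacent to a south or west dimer. Under this bijection, the weights \eqref{eqn:edge_weights} become transition weights on consecutive levels. Odd-to-even steps (east/north edges) carry the $\beta_i^{\pm1}$ weights and $(1-z^{-1})^{-1}$-type geometric jumps. Even-to-odd steps carry the $\alpha_i^{\pm1}$ weights. The $2$-periodicity in $j$ is encoded by grouping two consecutive heights $2y+j$, $j=0,1$, into one block, so that each transition operator becomes a $2\times 2$ block Toeplitz operator. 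I will then check that its matrix symbols are precisely $\phi_{2i-1}$ and $\phi_{2i}$ from \eqref{eq:transition_matrices}. This check consists of local bookkeeping of the edge weights and Kasteleyn-free signs, since path weights are positive.

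\textbf{Step 2: Eynard--Mehta and the formula of \cite{Ber21}.} By Lindstr\"om--Gessel--Viennot and the Eynard--Mehta theorem, the path point process is determinantal. Its kernel is the block-Toeplitz, Wiener--Hopf type kernel studied in \cite{BD19}. The paper \cite{Ber21} carries out this analysis under the hypothesis $\prod\alpha_i=\prod\beta_i$, which gives $\det\Phi=1$. It yields a double contour integral in $z$ with a matrix integrand involving $\Phi(z)^{x-N}$ and partial products $\prod_{m\le 2i}\phi_m$, together with a single-integral term for the ordering $\ell x+i>\ell x'+i'$. Since $\{u^t_s\}$ is the restriction of the path process to even levels, its correlation functions are the principal minors of that kernel at even levels. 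So it suffices to specialize the kernel of \cite{Ber21} to even levels and rewrite it in the form \eqref{eq:correlation_kernel}.

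\textbf{Step 3: lifting to the spectral curve.} To pass from $z$-plane integrals of matrix powers of $\Phi$ to integrals on $\mathcal R$, I will use the spectral decomposition
\[
\Phi(z)^n=\sum_{w:\,P(z,w)=0} w^n\, Q(z,w),
\]
which holds away from branch points because $Q$ in \eqref{eq:adjuagate_matrix} is the spectral projection onto the $w$-eigenspace. A $z$-contour integral of a matrix function of $\Phi$ then becomes an integral over its lift to the two sheets, that is, over a closed curve on $\mathcal R$. The factor $(z-1)^{\pm\ell N}$ arises from writing $w$-powers against the normalization $(1-z^{-1})^{-\ell}$ in $\Phi$. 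Finally, the contours in $\CC$ separating $0,1$ and the relevant poles lift to $\tilde\Gamma_s,\tilde\Gamma_l$ with $p_0,q_\infty$ inside and $p_\infty,q_0$ outside.

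\textbf{Main obstacle.} I expect the hardest part to be this last step. One has to verify that the Wiener--Hopf factorization used in \cite{Ber21} corresponds exactly to which sheet, i.e.\ which of $q_0$ or $q_\infty$, lies inside the lifted contours. One also has to check that no residue contributions at the branch points are lost when deforming the lifted contours. My first attempt would be to check the eigenvalue sizes $|w|$ near $z=1$ on each sheet, and to confirm that the integrand is meromorphic on $\mathcal R$ with poles only at $p_0,p_\infty,q_0,q_\infty$ and on the diagonal $z_1=z_2$. Once that is established, the contour constraints in the theorem follow by deformation.
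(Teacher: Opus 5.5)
Your argument is correct, but it is not the route the paper's proof takes. It is the alternative the paper mentions in Remark~\ref{rem:relation_kernal} and deliberately sets aside.

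You obtain $K_{\operatorname{Int}}$ as the even-level restriction of the DR-path kernel of \cite{Ber21}, via Johansson's bijection \cite{Joh05a}, Eynard--Mehta and \cite{BD19}. The paper instead takes the double-contour formula \eqref{eq:inverse_kasteleyn} for $K_{\operatorname{Az}}^{-1}$ as input and argues as follows. It splits the event that $\mathrm b_p$ carries a particle into the disjoint events $\mathrm b_p\mathrm w_{p0}\in\mathcal D$ and $\mathrm b_p\mathrm w_{p1}\in\mathcal D$. It then writes $\rho_k^{\operatorname{Int}}$ as a sum of $2^k$ Kenyon determinants \eqref{eq:kenyons_formula} and collapses this sum into a single determinant by multilinearity in the columns. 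Finally, it observes that $\sum_{q}K_{\operatorname{Az}}(\mathrm w_{p'q},\mathrm b_{p'})K_{\operatorname{Az}}^{-1}(\mathrm b_p,\mathrm w_{p'q})$ amounts to left-multiplying the integrand by $\phi_{2i'+1}(z_1)$, which turns $\prod_{m=1}^{2i'+1}\phi_m$ into $\prod_{m=1}^{2i'}\phi_m$.

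For this turning point your route is the more direct one. The paper itself calls its proof a detour, since \eqref{eq:inverse_kasteleyn} was derived from the path kernel. The obstacles you anticipate in Step~3 are also largely handled already. The kernel of \cite{Ber21} is already formulated on $\mathcal R$ with these contour prescriptions, which is why the paper can say the theorem follows directly from it. Moreover, $Q\,\d z$ is holomorphic at the branch points (see the proof of Lemma~\ref{lem:global_matrix_meromorphicity}), so no branch-point residues arise.

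What the paper's route buys is portability. The same $K_{\operatorname{Az}}^{-1}$ and the same multilinearity step give the analogous particle kernels at the other three turning points. A path formulation adapted to those directions would instead need $\ell\times\ell$ transition symbols and the heavier machinery of \cite{BB23}.
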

\begin{proof}
Since we consider a point process on a discrete space, 
\begin{equation}\label{eq:rho_probability}
\rho_k^{\operatorname{Int}}\left(\ell x_1+i_1,2y_1+j_1;\dots;\ell x_k+i_k,2y_k+j_k\right)
=\PP_{\operatorname{Int}}\left(\text{particles at } \mathrm b_1, \dots, \mathrm b_k\right),
\end{equation}
where~$\mathrm b_p=(2(\ell x_p+i_p),2(2y_p+j_p)+1)$ for~$p=1,\dots,k$. By definition of the interlacing particle system, the probability on the right-hand side of~\eqref{eq:rho_probability} can be expressed in terms of~$\PP_{\operatorname{dimer}}$: For a black vertex~$\mathrm b_p=(2(\ell x_p+i_p),2(2y_p+j_p)+1)$ and~$q=0,1$, let~$\mathrm w_{pq}=(2(\ell x_p+i_p)+1,2(2y_p+j_p+q))$ be the adjacent white vertex so that~$\mathrm b_p\mathrm w_{pq}$ is a south edge if~$q=0$ and a west edge if~$q=1$. Then
\begin{equation}\label{eq:particle_dimer_probabilities}
\PP_{\operatorname{Int}}\left(\text{particles at } \mathrm b_1, \dots, \mathrm b_k\right)
=
\PP_{\operatorname{dimer}}\left(\mathrm b_p \mathrm w_{p0}\in \mathcal D \text{ or } \mathrm b_p \mathrm w_{p1} \in \mathcal D \text{ for all }p=1,\dots,k\right).
\end{equation}
Since~$\mathrm b_p \mathrm w_{p0}\in \mathcal D$ and~$\mathrm b_p \mathrm w_{p1}\in \mathcal D$ are disjoint events, the right-hand side of~\eqref{eq:particle_dimer_probabilities} is equal to
\begin{equation}\label{eq:sum_probabilities}
\sum_{(q_1,\dots,q_k)\in \{0,1\}^k} \PP_{\operatorname{dimer}}\left(\mathrm b_p \mathrm w_{pq_p}\in \mathcal D \text{ for all }p=1,\dots,k\right),
\end{equation}
where the sum runs over all~$k$-tuple in~$\{0,1\}^k$.

The terms in~\eqref{eq:sum_probabilities} are of the form~\eqref{eq:kenyons_formula}, and the sum is therefore a sum over determinants. By the multilinearity of determinants,~\eqref{eq:sum_probabilities} is equal to
\begin{multline}\label{eq:sum_det}
\sum_{(q_1,\dots,q_k)\in \{0,1\}^k}
\det\left(K_\text{Az}(\mathrm w_{p'q_{p'}},\mathrm b_{p'})K_\text{Az}^{-1}(\mathrm b_p,\mathrm w_{p'q_{p'}})\right)_{p,p'=1}^k \\
=\det\left(\sum_{q_{p'}\in \{0,1\}}K_\text{Az}(\mathrm w_{p'q_{p'}},\mathrm b_{p'})K_\text{Az}^{-1}(\mathrm b_p,\mathrm w_{p'q_{p'}})\right)_{p,p'=1}^k.
\end{multline}
What is left to prove, is that the sum inside the determinant is the double contour integral given in the statement. 

The Kasteleyn matrix~$K_\text{Az}$ is given in~\eqref{eq:kasteleyn} and its inverse~$K_\text{Az}^{-1}$ was derived in~\cite{Ber21}, see also~\cite{BB23, BNR25}
for a formulation closer to what we use here, and is given by
\begin{multline}\label{eq:inverse_kasteleyn}
K_\text{Az}^{-1}(\mathrm b_{\ell x+i,2y+j},\mathrm w_{\ell x'+i',2y'+j'}) \\
= -\left(\frac{\one_{\ell x+i>\ell x'+i'}}{2\pi\i}\int_{\tilde \Gamma_l}\left(\prod_{m=1}^{2i'+1}\phi_m(z)\right)^{-1}Q(z,w)\prod_{m=1}^{2i}\phi_m(z)w^{x-x'}z^{y'-y}\frac{\d z}{z}\right)_{j'+1,j+1} \\
+\left(\frac{1}{(2\pi\i)^2}\int_{\tilde \Gamma_s}\int_{\tilde \Gamma_l}\left(\prod_{m=1}^{2i'+1}\phi_m(z_1)\right)^{-1}Q(z_1,w_1)Q(z_2,w_2)\prod_{m=1}^{2i}\phi_m(z_2)\right. \\
\left.\times \frac{(z_2-1)^{\ell N}}{(z_1-1)^{\ell N}}\frac{w_2^{x-N}}{w_1^{x'-N}}\frac{z_1^{y'}}{z_2^y}\frac{\d z_2\d z_1}{z_2(z_2-z_1)}\right)_{j'+1,j+1},
\end{multline}
where~$\mathrm b_{\ell x+i,2y+j}=(2(\ell x+i),2(2y+j)+1)$,~$\mathrm w_{\ell x'+i',2y'+j'}=(2(\ell x'+i')+1,2(2y'+j')+2)$, and~$\tilde \Gamma_s$ and~$\tilde \Gamma_l$ are as in the statement. 

With the notation used above, we have~$\mathrm b_{p'}=\mathrm b_{\ell x_{p'}+i_{p'},2y_{p'}+j_{p'}}$, and~$\mathrm w_{p'q_{p'}}=\mathrm w_{\ell x_{p'}+i_{p'},2y_{p'}+j_{p'}-1+q_{p'}}$. It follows from~\eqref{eq:kasteleyn} and~\eqref{eq:inverse_kasteleyn}, that the sum within the determinant on the right-hand side of~\eqref{eq:sum_det} results in multiplying the integrand of~\eqref{eq:inverse_kasteleyn} with~$\phi_{2i_{p'}+1}(z_1)$, given in~\eqref{eq:transition_matrices}, from the left. This proves the theorem.
\end{proof}

\begin{remark}\label{rem:relation_kernal}
The correlation kernel in Theorem~\ref{thm:correlation_kernel} is simply the restriction to even levels of the correlation kernel for the associated non-intersecting paths point process. That correlation kernel was derived in~\cite{Ber21} using a technique developed in~\cite{DK21, BD19}, and Theorem~\ref{thm:correlation_kernel} therefore follows from~\cite{JN06}. Here, we instead used the inverse Kasteleyn matrix, obtained in~\cite{BB23}. However, the expression~\eqref{eq:inverse_kasteleyn} was derived using the already known double contour integral formulation of the correlation kernel for the non-intersecting paths point process. So, the proof provided here is a bit of a detour. The reason we still include it, is that a similar argument would allow us to study the corresponding point process at the other turning points using the same expression for the inverse Kasteleyn matrix~\eqref{eq:inverse_kasteleyn}. This is not as easy to do if we use the non-intersecting paths formulation and rely on the formulas from~\cite{BD19, BB23}. Indeed, if we did, the matrices~\eqref{eq:transition_matrices} would be~$\ell\times \ell$ matrices, and we would have to rely on the more sophisticated method developed in~\cite{BB23}.
\end{remark}

\begin{remark}
As seen in Theorem~\ref{thm:correlation_kernel}, the expression of the correlation kernel~\eqref{eq:correlation_kernel} is naturally expressed as a matrix:
\begin{equation}
\left[K_{\operatorname{Int}}(\ell x+i,2y+j;\ell x'+i',2y'+j')\right]_{j',j=0}^1=
\begin{psmallmatrix}
K_{\operatorname{Int}}(\ell x+i,2y;\ell x'+i',2y') & K_{\operatorname{Int}}(\ell x+i,2y+1;\ell x'+i',2y') \\
K_{\operatorname{Int}}(\ell x+i,2y;\ell x'+i',2y'+1) & K_{\operatorname{Int}}(\ell x+i,2y+1;\ell x'+i',2y'+1)
\end{psmallmatrix}.
\end{equation}
However, as we evaluate the correlation functions, we use the individual entries. 
\end{remark}

\section{Convergence to the marked GUE-corners process}\label{sec:main}
In this section we state our results. In particular, we introduce the marked GUE-corners process and show that it is the limit of the appropriate colored interlacing particle system defined from the dimer model. The steepest descent analysis of the correlation kernel is deferred to Section~\ref{sec:steepest}, and the derivations of $\tau$ and $\sigma^2$ are postponed to Section~\ref{sec:ty_sigma}.

In~\cite{Ber21}, a diffeomorphism between the rough region and ``half'' of the Riemann surface~$\mathcal R$ (the gray shaded part in Figure~\ref{fig:rs_sheets}) was constructed. This diffeomorphism was used to describe the geometry of the arctic curve and, in particular, to show that there are four \emph{turning points} -- points where the rough region touches the boundary of the Aztec diamond -- one on each side of the Aztec diamond. Under this diffeomorphism the turning points correspond to~$p_0$,~$p_\infty$,~$q_0$, and~$q_\infty$. We focus on a neighborhood around the turning point on the right side of the Aztec diamond, which corresponds to~$q_\infty$. In particular, we show that the correlation kernel from Theorem~\ref{thm:correlation_kernel} converges to the correlation kernel associated to the marked GUE-corners process, see Theorem~\ref{thm:limit_correlation_function} below. We postpone the proof to Section~\ref{sec:steepest}. Our main result, that the (appropriately scaled) interlacing particle system from Section~\ref{sec:interlacing_particles} converges weakly to the marked GUE-corners process follows as a corollary.  

The turning point we are interested in is located, up to leading order, at~$(\ell x+i,2y+j)\sim (2\ell N, 2 N\tau )$, for some~$\tau \in (0,\ell)$.
\begin{proposition}\label{prop:turning_point}
For~$k=1,\dots,\ell$, set~$a_k=\alpha_k\beta_{k-1}^{-1}$ and~$b_k=\alpha_k^{-1}\beta_k$. Then
\begin{equation}\label{eq:turning_point}
\tau =
\sum_{k=1}^\ell \frac{1+a_k+a_kb_k+a_kb_ka_{k+1}}
{(1+a_k)(1+b_k)(1+a_{k+1})}
\end{equation}
where~$a_{\ell+1}=a_1$.
\end{proposition}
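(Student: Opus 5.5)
The plan is to locate the turning point by the usual saddle-point criterion. Near $q_\infty$ the leading-order exponential part of the integrand in the double integral of Theorem~\ref{thm:correlation_kernel} must have a \emph{degenerate} critical point exactly at $q_\infty$; this is what produces the $\sqrt N$ (GUE) scaling. I will write this criterion as one scalar equation for $\tau$ and then evaluate it using the rank-one structure of the transfer matrices at $z=1$.

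\textbf{Step 1 (action function).} At the right boundary we have $\ell x+i\approx 2\ell N$, i.e.\ $x\approx 2N$, and we set $y\approx N\tau$. The $z_2$-dependent factor of the integrand is then $\exp(N F(z_2,w_2))$ to leading order, with
\begin{equation}
F(z,w)=\ell\log(z-1)+\log w-\tau\log z .
\end{equation}
Write $(1-z^{-1})^{\ell}\Phi(z)=M(z)$, where $M=\prod_{m=1}^{2\ell}\psi_m$, $\psi_{2i-1}=\phi_{2i-1}$, and $\psi_{2i}$ is $\phi_{2i}$ with the scalar prefactor removed. Then on the sheet through $q_\infty$ we have $w=\lambda(z)(1-z^{-1})^{-\ell}$, where $\lambda$ is the eigenvalue of $M$ that does not vanish at $z=1$. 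It follows that $(z-1)^\ell w=z^\ell\lambda(z)$ is holomorphic and nonzero near $q_\infty$, and that $\zeta=z-1$ is a local coordinate there. Hence
\begin{equation}
F=\log\lambda(z)+(\ell-\tau)\log z,
\end{equation}
and $F$ is regular at $q_\infty$. The turning point is characterised by $\partial_\zeta F|_{\zeta=0}=0$: the two critical points of $F$ then merge at $q_\infty$ and the quadratic term gives $\sigma$. I would justify that this is the right criterion by invoking the diffeomorphism of \cite{Ber21}, which sends the turning points to $p_0,p_\infty,q_0,q_\infty$. The criterion gives
\begin{equation}
\tau=\ell+\frac{\lambda'(1)}{\lambda(1)},
\end{equation}
possibly up to a sign convention on the $z^{-1}$ variables, which I will fix by checking the uniform case. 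There, $\alpha_i=\beta_i=1$ gives $a_k=b_k=1$, so \eqref{eq:turning_point} gives $\tau=\ell/2$, as it should.

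\textbf{Step 2 (reduce to traces).} Since $\det\psi_m=1-z^{-1}$, we have $\det M=(1-z^{-1})^{2\ell}$. The two eigenvalues satisfy $\lambda+\det M/\lambda=\operatorname{Tr}M$, so $\lambda=\operatorname{Tr}M+O((z-1)^{2\ell})$. Therefore $\lambda(1)=\operatorname{Tr}M(1)$ and $\lambda'(1)=\operatorname{Tr}M'(1)$, and
\begin{equation}
\tau=\ell+\frac{\operatorname{Tr}M'(1)}{\operatorname{Tr}M(1)} .
\end{equation}

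\textbf{Step 3 (explicit evaluation).} At $z=1$ each factor is rank one: $\psi_m(1)=v_m u_m^{T}$ with $v_m=(1,c_m)^T$ and $u_m=(1,c_m^{-1})^T$, where $c_m\in\{\alpha_i,\beta_i\}$. Hence $\operatorname{Tr}M(1)$ is the cyclic product $\prod_m u_m^Tv_{m+1}$. The consecutive scalars are of the form $1+c_{m+1}/c_m$, which in terms of $a_k=\alpha_k\beta_{k-1}^{-1}$ and $b_k=\alpha_k^{-1}\beta_k$ become $(1+a_k)$ and $(1+b_k)$. For the derivative, $\psi_m'(1)=\begin{psmallmatrix}0&c_m^{-1}\\0&0\end{psmallmatrix}$, and the product rule gives
\begin{equation}
\operatorname{Tr}M'(1)=\sum_m \operatorname{Tr}\Big(\cdots\psi_{m-1}(1)\,\psi_m'(1)\,\psi_{m+1}(1)\cdots\Big).
\end{equation}
Each summand is again a cyclic product of scalars, and it differs from $\operatorname{Tr}M(1)$ only in the two factors adjacent to position $m$. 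Dividing by $\operatorname{Tr}M(1)$ therefore leaves a local rational expression in two or three neighbouring $a$'s and $b$'s. Grouping the odd term $m=2k-1$ with the even term $m=2k$, and adding $\ell$ distributed as $1$ per $k$, should give the $k$th summand $\frac{1+a_k+a_kb_k+a_kb_ka_{k+1}}{(1+a_k)(1+b_k)(1+a_{k+1})}$ of \eqref{eq:turning_point}.

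The main obstacle is Step 3. One has to keep the indices and the orientation ($z$ versus $z^{-1}$, which sheet) consistent, so that the per-$k$ terms telescope into exactly the stated form with $a_{\ell+1}=a_1$. I would first carry out the computation for $\ell=1$ and $\ell=2$ to pin down conventions, and only then do general $\ell$. As a sanity check, each summand lies in $(0,1)$, so $\tau\in(0,\ell)$, consistent with the claim before the proposition.
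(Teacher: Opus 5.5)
Your route is essentially the paper's. The condition $\d G(q_\infty)=0$ gives $\tau=q'(1)/q(1)$ with $q(z)=(z-1)^\ell\Tr\Phi(z)=z^\ell\Tr M(z)$, which is exactly your $\tau=\ell+\Tr M'(1)/\Tr M(1)$. Both arguments then evaluate this using the product rule and the rank-one factorisation of the transfer matrices at $z=1$. The paper pairs the factors into $\varphi_m=(z-1)\phi_{2m-1}\phi_{2m}$, whose derivative is the constant matrix $\begin{psmallmatrix}1&0\\ \alpha_m+\beta_m&1\end{psmallmatrix}$. You instead keep $2\ell$ factors and split off $z^\ell$. Your Step 2, via $\lambda+\det M/\lambda=\Tr M$, is a clean substitute for Lemma~\ref{lem:pq}(c).

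However, Step 3 as written contains a sign error that would make it fail. Since $\frac{\d}{\d z}z^{-1}=-z^{-2}$, one has $\psi_m'(1)=\begin{psmallmatrix}0&-c_m^{-1}\\0&0\end{psmallmatrix}$, not $+c_m^{-1}$. With your sign the $k$th group would come out as $1+\frac{b_k}{(1+a_k)(1+b_k)}+\frac{a_{k+1}}{(1+b_k)(1+a_{k+1})}$, which gives $\tau=3\ell/2$ in the uniform case. There is also no sign convention left to choose. The identity $\tau=\ell+\lambda'(1)/\lambda(1)$ is forced by $\d G(q_\infty)=0$, exactly as you derived it. So ``fixing the sign by checking the uniform case'' would only introduce a second, compensating error, and it would calibrate the proof against the formula you are trying to prove.

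With the correct sign, the $m$th summand of $\Tr M'(1)/\Tr M(1)$ is $u_{m-1}^T\psi_m'(1)v_{m+1}$ divided by $(u_{m-1}^Tv_m)(u_m^Tv_{m+1})$. That is, it equals $-\frac{c_{m+1}/c_m}{(1+c_m/c_{m-1})(1+c_{m+1}/c_m)}$. For $m=2k-1$ this is $-\frac{b_k}{(1+a_k)(1+b_k)}$, and for $m=2k$ it is $-\frac{a_{k+1}}{(1+b_k)(1+a_{k+1})}$, with indices read cyclically so that $\beta_0=\beta_\ell$ and $a_{\ell+1}=a_1$. Then
\begin{equation*}
1-\frac{b_k}{(1+a_k)(1+b_k)}-\frac{a_{k+1}}{(1+b_k)(1+a_{k+1})}=\frac{(1+a_k)(1+b_k)(1+a_{k+1})-b_k(1+a_{k+1})-a_{k+1}(1+a_k)}{(1+a_k)(1+b_k)(1+a_{k+1})}.
\end{equation*}
Expanding the numerator gives exactly $1+a_k+a_kb_k+a_kb_ka_{k+1}$. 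So the ``should give'' in Step 3 does close, with no telescoping needed: each $k$ contributes its summand directly. You should replace the sign hedge with this explicit computation.
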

This statement is part of Proposition~\ref{prop:exact_values}, and is proved in Section~\ref{sec:ty_sigma}. Note that each term in the sum on the right-hand side of \eqref{eq:turning_point} is in $(0,1)$, so $\tau \in (0,\ell)$ if $\tau $ is defined by the right-hand side of \eqref{eq:turning_point}. The value of~$\tau $ was expressed in terms of the polynomial from Lemma~\ref{lem:zeros_disc} in~\cite{Ber21}:
\begin{equation}
\tau =\frac{1}{2}\frac{p'(1)}{p(1)},
\end{equation}
Proposition~\ref{prop:turning_point} refines that result. 

We zoom in around the turning point and introduce the new coordinates~$(t,\mu)\in \Lambda=\ZZ_{>0}\times \RR$ and set
\begin{equation}\label{eqn:zoom_coordinates}
\ell x_p+i_p=2\ell N-t_p, \quad \text{and} \quad y_p=\lfloor N\tau +\sqrt{N}\mu_p\rfloor, \quad p=1,2,
\end{equation}
where~$t_p=0,\dots,2\ell N-1$, and~$\mu_p\in \RR$ is such that~$y_p=0,\dots,\ell N-1$.

Before we formulate the limiting result for the correlation kernel, we need to introduce a few more objects. For~$(t,j)\in \ZZ\times \{0,1\}$, we introduce the function that will capture the periodicity of the model in the scaling limit,
\begin{equation}\label{eq:period_function}
\nu(t,j)=
\begin{cases}
\frac{1}{1+\alpha_{\ell+1-t}\beta_{\ell-t}^{-1}}, & j=0, \\
\frac{\alpha_{\ell+1-t}\beta_{\ell-t}^{-1}}{1+\alpha_{\ell+1-t}\beta_{\ell-t}^{-1}}, & j=1.
\end{cases}
\end{equation}
Recall that the weights are periodic --~$\alpha_{i+\ell}=\alpha_i$ and~$\beta_{i+\ell}=\beta_i$ for all~$i\in \ZZ$. We define a gauge function~$g$ by setting
\begin{equation}\label{eq:gauge_function}
g(\ell x+i,2y+j)=N^{\frac{\ell x+i}{2}} B^{x}\sigma^{\ell x+i}
\prod_{m=1}^i(1+\alpha_m^{-1}\beta_m)(1+\alpha_{m+1}\beta_m^{-1})\alpha_{i+1}^{-j}
\end{equation}
where
~$B=\prod_{m=1}^\ell(1+\alpha_m^{-1}\beta_m)(1+\alpha_{m+1}\beta_m^{-1})$ and the parameter~$\sigma>0$ is defined by
\begin{equation}\label{eq:sigma_square}
\sigma^2=
\sum_{k=1}^\ell \frac{(1+a_k+a_kb_k+a_kb_ka_{k+1})(b_k+a_{k+1}+b_ka_{k+1}+a_ka_{k+1})}
{(1+a_k)^2(1+b_k)^2(1+a_{k+1})^2}
\end{equation}
where, as before,~$a_k=\alpha_k\beta_{k-1}^{-1}$ and~$b_k=\alpha_k^{-1}\beta_k$. The parameter~$\sigma^2$ is naturally defined by the second derivative of the \emph{action function} at~$q_\infty$, which is how it is introduced in Section~\ref{sec:steepest}, see Corollary \ref{lem:action_taylor}. The exact form given in~\eqref{eq:sigma_square} is determined in Section~\ref{sec:ty_sigma}. 
\begin{example}[The two-periodic Aztec diamond]
Let $\ell=2$, and $\alpha_1^{-1}=\beta_1^{-1}=\alpha_2=\beta_2=a$ for some $a>0$. Then
\begin{equation}
\tau=1, \quad \sigma^2=\frac{2}{(a+a^{-1})^2}, \quad 
\nu(t,j)=
\begin{cases}
\frac{a^2}{1+a^2}, & t \text{ even}, \,\, j=0, \quad \text{or} \quad t \text{ odd}, \,\, j=1, \\
\frac{1}{1+a^2}, & t \text{ odd}, \,\, j=0 \quad \text{or} \quad t \text{ even}, \,\, j=1.
\end{cases}
\end{equation}
\end{example}

We denote $\Lambda_{\{0,1\}}= \mathbb{Z}_{>0}\times\mathbb{R}\times\{0,1\}$. Then we have the following limiting result for the correlation kernel.
\begin{theorem}[Theorem \ref{thm:0}]\label{thm:limit_correlation_function}
Let~$K_{\operatorname{Int}}$ be the correlation kernel given in Theorem~\ref{thm:correlation_kernel}, then,  with the notation introduced above,  for $\mu_1\not=\mu_2$, 
\begin{multline}
\lim_{N\to \infty}\frac{g(\ell x_1+i_1,2y_1+j_1)}{g(\ell x_2+i_2,2y_2+j_2)}N^{\frac{1}{2}}K_{\operatorname{Int}}\left(\ell x_1+i_1,2y_1+j_1;\ell x_2+i_2,2y_2+j_2\right) \\
=\nu(t_2,j_2)\sigma^{-1}K_{\operatorname{GUE}}(t_1,\sigma^{-1}\mu_1;t_2,\sigma^{-1}\mu_2),
\end{multline}
where
\begin{equation}\label{eq:kernel_GUE}
K_{\operatorname{GUE}}(t_1,\mu_1;t_2,\mu_2)=\frac{1}{(2\pi\i)^2}\int_{\gamma_s'}\int_{\gamma_l'}\e^{\frac{1}{2}(z_2^2-z_1^2)}\e^{\mu_1 z_1-\mu_2z_2}\frac{z_2^{t_2}}{z_1^{t_1}}\frac{\d z_1\d z_2}{z_2-z_1},
\end{equation}
and the sequence on the left-hand side is uniformly bounded in $N$
 on compact subsets of 
$\Lambda_{\{0,1\}}^2$ with respect to the embedding given by $(t_p, \mu_p, j_p)\in \Lambda_{\{0,1\}}$ for $p=1,2$.
Here~$\gamma_s'$ is a counterclockwise oriented contour around the origin and~$\gamma_{\ell}'$ is a contour connection of $-i\infty$ and $+i\infty$ and 
 is at the right of $\gamma_s$ if $\mu_1<\mu_2$ and at the left if $\mu_1> \mu_2$, see Figure \ref{fig:contours_gue_corners}. 
 \end{theorem}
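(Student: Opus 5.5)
We carry out a steepest descent analysis of the double contour integral in Theorem~\ref{thm:correlation_kernel}, localized at the point $q_\infty=(1,\infty)\in\mathcal R$, which corresponds to the right turning point. Substituting \eqref{eqn:zoom_coordinates}, the $N$-dependent part of the integrand in the double integral can be written as $\exp(N(F(q_1)-F(q_2)))$ times lower-order factors. Here $q_k=(z_k,w_k)$, and the action function is
\begin{equation}
F(z,w)=-\ell\log(z-1)+2\ell\log w+\tau\log z,
\end{equation}
up to the choice of signs dictated by the exponents $(z_2-1)^{\ell N}w_2^{x-N}z_2^{-y}$. The remaining factors are $w^{-t/\ell}$-type terms, $z^{\mp\sqrt N\mu}$, and the matrix products $\prod\phi_m$ and $Q$.

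\emph{Local behaviour at $q_\infty$.} Near $q_\infty$ we have $w\to\infty$ and $z\to1$. The large eigenvalue of $\Phi(z)$ behaves like $w\sim c(z-1)^{-\ell}$, since each $\phi_{2i}$ carries the factor $(1-z^{-1})^{-1}$. In the local coordinate $\zeta=z-1$, the leading terms $-\ell\log\zeta$ and $+\ell\log w$ therefore cancel. The choice $\tau=\tfrac12 p'(1)/p(1)$ (Proposition~\ref{prop:turning_point}) is exactly what makes the linear term vanish. Hence $F(q)-F(q_\infty)=\tfrac{\sigma^2}{2}\zeta^2\cdot(\text{const})+O(\zeta^3)$, with $\sigma^2$ given by this second derivative (Corollary~\ref{lem:action_taylor}). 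This is a double critical point sitting at a pole/zero of the coordinates, and it produces the $\sqrt N$ scale.

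\emph{Rescaling and the remaining factors.} We substitute $z_k-1=\zeta_k/(\sigma\sqrt N)$, suitably signed. Then:
\begin{itemize}
\item $z^{\sqrt N\mu}$ produces $\e^{\mu\zeta/\sigma}$.
\item The leftover power $w^{x-N}$, with $\ell x+i=2\ell N-t$, gives $w^{-t/\ell}\sim(z-1)^{t}$ modulo the partial product $\prod_{m=1}^{2i}\phi_m$. This yields $\zeta_2^{t_2}/\zeta_1^{t_1}$ together with the powers $N^{t/2}\sigma^{t}$ that are absorbed by the gauge $g$.
\item $\d z_2/(z_2-z_1)$ gives $\d\zeta_2/(\zeta_2-\zeta_1)$.
\item One extra factor $(\sigma\sqrt N)^{-1}$ comes from the Jacobian, matching the prefactor $N^{1/2}$ and $\sigma^{-1}$.
\end{itemize}

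\emph{Matrix prefactor.} As $z\to1$, $Q(z,w)$ tends to the rank-one projection onto the dominant eigenvector of $\Phi$. The matrices $\phi_{2m}(z)(1-z^{-1})$ become $\begin{psmallmatrix}1&\beta_m^{-1}\\ \beta_m&1\end{psmallmatrix}$, which is rank one, and $\phi_{2m-1}(1)$ is also rank one. The partial products therefore collapse to scalar multiples of fixed rank-one matrices. These scalars are the factors $(1+\alpha_m^{-1}\beta_m)(1+\alpha_{m+1}\beta_m^{-1})$ and $\alpha_{i+1}^{-j}$ removed by $g$, and the normalization of the left vector against the inverse product produces $\nu(t_2,j_2)$. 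The key computation is that the $(j_2,\cdot)$ entry of the left row vector of $(\prod_{m\le 2i_2}\phi_m)^{-1}$ times the projector equals $\nu(t_2,j_2)$ after gauge, while the $j_1$-dependence is pure gauge. This is the source of the marks, and it has to be checked directly from \eqref{eq:transition_matrices}.

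\emph{Contours, single integral and bounds.} We deform $\tilde\Gamma_s$ and $\tilde\Gamma_l$ so that they pass through $q_\infty$ along steepest descent and ascent directions: $\tilde\Gamma_s$ becomes a small circle around $\zeta=0$, and $\tilde\Gamma_l$ becomes an essentially vertical line. The single integral in \eqref{eq:correlation_kernel} is handled in one of two ways, depending on the sign of $\mu_1-\mu_2$:
\begin{itemize}
\item it cancels against, or combines with, the residue at $z_1=z_2$ picked up when the contours are crossed; or
\item it is shown to be exponentially small after gauge.
\end{itemize}
This is why $\gamma_l'$ lies on one side of $\gamma_s'$ or the other according to that sign, and why $\mu_1\ne\mu_2$ is needed.

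\emph{Main obstacle.} The main obstacle is the global part of this step: showing that $\re F$ has the correct sign along the deformed contours everywhere on the higher-genus surface $\mathcal R$, away from $q_\infty$. I would use the Harnack property and the diffeomorphism of~\cite{Ber21}: the number of critical points of $F$ is bounded, all other critical points lie on the ovals, and so level lines of $\re F$ through $q_\infty$ separate $p_0$ and $q_\infty$ from $p_\infty$ and $q_0$ as required. Dominated convergence on the local pieces and exponential decay elsewhere then give both the limit and uniform boundedness on compact sets.
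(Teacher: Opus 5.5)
Your overall route is the paper's: localization at $q_\infty$ on the $1/\sqrt N$ scale, $\tilde\Gamma_s$ shrunk to a small circle, $\tilde\Gamma_l$ replaced by a descent contour that is vertical near $q_\infty$, global control from the zero structure of $\d G$, and the single integral split by the sign of $\mu_1-\mu_2$. There is, however, a genuine gap in the matrix prefactor. That step is exactly where the marks come from, and the computation you defer would fail as you set it up.

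\emph{The naive leading-order product vanishes.} The leading coefficient of $(\prod_{m=1}^{2i}\phi_m(z))^{-1}$ at $z=1$ belongs to a pole of order $i$. It is a rank-one matrix whose row vector is $(1,-\alpha_1^{-1})$, coming from the residue of $\phi_1^{-1}$. This row vector annihilates the column $(1,\alpha_1)^T$ of the projector $Q(q_\infty)\propto(1,\alpha_1)^T(1,\beta_\ell^{-1})$. In fact $(\prod_{m\le 2i}\phi_m)^{-1}Q$ has a \emph{zero} of order $i$ at $q_\infty$. Consequently:
\begin{itemize}
\item ``leading row vector times projector'' gives $0$;
\item naive power counting produces $Z_1^{-(\ell r+i)}$ instead of the correct $Z_1^{-(\ell r-i)}=Z_1^{-t}$;
\item a direct expansion would need $Q$ to order $2i$ at $q_\infty$.
\end{itemize}

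\emph{The missing idea is the eigen-relation $\Phi Q=wQ$.} It gives $(\prod_{m\le 2i}\phi_m)^{-1}Q=w^{-1}\prod_{m=2i+1}^{2\ell}\phi_m\,Q$. Combine this with the rank-one factorizations of $\phi_{2m-1}(1)$ and $(z-1)\phi_{2m}(z)|_{z=1}$, and with $(z-1)^\ell w|_{q_\infty}=B$ (Lemma~\ref{lem:at_1}). Writing $\tilde P(i)=\prod_{m=1}^{i}(1+\alpha_m^{-1}\beta_m)(1+\alpha_{m+1}\beta_m^{-1})$, this yields $(z-1)^{-i}(\prod_{m\le 2i}\phi_m)^{-1}Q\to\tilde P(i)^{-1}(1+\alpha_1\beta_\ell^{-1})^{-1}(1,\alpha_{i+1})^T(1,\beta_\ell^{-1})$.

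So the index attached to the inverted product is pure gauge, contrary to your attribution. The factor $\nu$ sits on the forward side: $(z-1)^{i'}Q\prod_{m\le 2i'}\phi_m\to\tilde P(i')(1+\alpha_{i'+1}\beta_{i'}^{-1})^{-1}(1,\alpha_1)^T(1,\beta_{i'}^{-1})$. The reason is that the truncated product lacks the bond factor $1+\alpha_{i'+1}\beta_{i'}^{-1}$ and ends in $\beta_{i'}^{-1}$ rather than the gauge's $\alpha_{i'+1}^{-1}$. Its $j'$-entry, $\beta_{i'}^{-j'}/(1+\alpha_{i'+1}\beta_{i'}^{-1})=\alpha_{i'+1}^{-j'}\nu(\ell-i',j')$, is what survives the gauge $g$. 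This step has to be replaced, not merely checked.

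Three smaller points:
\begin{itemize}
\item \emph{Action function.} Your formula is misstated. Since $x-N\approx N$ and $y\approx\tau N$, the exponent is $N(G(z_2,w_2)-G(z_1,w_1))$ with $G=\ell\log(z-1)+\log w-\tau\log z$. With $2\ell\log w$, the cancellation you invoke at $q_\infty$ does not occur.
\item \emph{No ascent contour is needed for $z_1$.} The factor $z_1^{y}$ with $y\approx\tau N$ removes the singularity at $p_0$, so $\tilde\Gamma_s$ can simply be shrunk to a circle of radius $R_1/\sqrt N$.
\item \emph{The $z_2$ deformation is subtler than your residue bookkeeping.} The descent contour runs into $q_0$, where $\e^{NG}$ vanishes to order $2\ell N$. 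It therefore passes the lift of $z_1$ on the other sheet, where $1/(z_2-z_1)$ is singular. This is harmless only because $Q(z,w)Q(z,w')=0$ for $w\neq w'$ (Lemma~\ref{lem:global_matrix_meromorphicity}), a point your bookkeeping at $z_1=z_2$ does not address.
\end{itemize}
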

 
 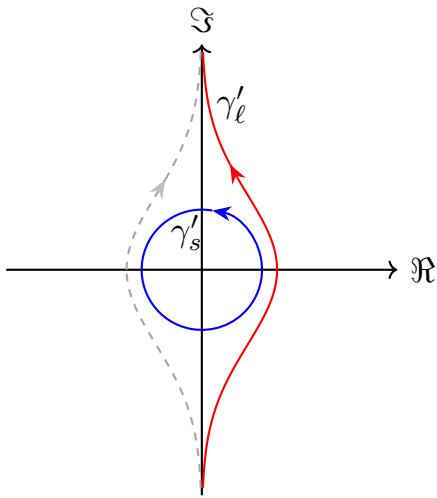
\begin{figure}
 \begin{center}
 \begin{tikzpicture}[scale=2]

    \draw[->, thick] (-1.3,0) -- (1.3,0) node[right, scale=1.2] {$\Re$}; 
    \draw[->, thick] (0,-1.5) -- (0,1.5) node[above, scale=1.2] {$\Im$};

    \draw[thick, blue, -{Stealth[length=2.5mm, width=2mm]}] (0.4,0) arc (0:80:0.4);
    \draw[thick, blue] (80:0.4) arc (80:360:0.4);

    \draw[thick, red, domain=0:0.7, samples=100, smooth, variable=\y, -{Stealth[length=2.5mm, width=2mm]}]
         plot ( {exp(-2*\y*\y)/2}, {\y} );
    \draw[thick, red, domain=0.7:1.45, samples=150, smooth, variable=\y]
         plot ( {exp(-2*\y*\y)/2}, {\y} );
    \draw[thick, red, domain=-1.45:0, samples=200, smooth, variable=\y]
         plot ( {exp(-2*\y*\y)/2}, {\y} );

    \draw[thick, gray!70, dashed, domain=-1.45:0, samples=150, smooth, variable=\y]
         plot ( {-exp(-2*\y*\y)/2}, {\y} );
    \draw[thick, gray!70, dashed, domain=0:0.6, samples=120, smooth, variable=\y, -{Stealth[length=2.5mm, width=2mm, color=gray!50]}]
         plot ( {-exp(-2*\y*\y)/2}, {\y} );
    \draw[thick, gray!70, dashed, domain=0.6:1.45, samples=150, smooth, variable=\y]
         plot ( {-exp(-2*\y*\y)/2}, {\y} );

    \node[scale=1.2] at (-0.1, 0.25) {$\gamma_s'$};

    \node[scale=1.2] at (0.2, 1.1) {$\gamma_{\ell}'$};

\end{tikzpicture}

\end{center}
 \caption{Contours of integration for the integral representation of the GUE-corners process; $\gamma_s'$ is drawn in blue and $\gamma_{\ell'}$ is drawn in red when $\mu_1<\mu_2$ and in a dashed grey line for $\mu_1> \mu_2$.} \label{fig:contours_gue_corners}
 \end{figure}

The proof of the previous theorem is postponed to Section~\ref{sec:steepest}.

The kernel~$K_{\operatorname{GUE}}$ is the correlation kernel associated to the GUE-corners process. Recall that the GUE-corners process is defined as follows (see e.g.,~\cite{Gor21}): Let~$X=(X_{ij})_{i,j \in\mathbb{Z}_{>0}}$ be a random infinite matrix defined by the independent gaussian random variables ~$X_{ij}\sim \frac{1}{\sqrt{2}}(N(0,1)+\i N(0,1))$ for $i>j$ and ~$X_{ii}\sim N(0,1)$. For~$t\in \ZZ_{>0}$, let~$\xi_1^t\leq \dots\leq \xi_t^t$ be the eigenvalues of the principal~$t\times t$ top-left corner of~$X$. The point configuration~$(\xi_s^t)_{1\leq s\leq t}$ is called the \emph{GUE-corners process}. The GUE-corners process is expected to arise as a universal scaling limit at turning points of uniformly distributed dimer models. This limit was first observed in~\cite{OR06, JN06}, see also~\cite{Joh05a}, and has by now been proved for a large family of lozenge tiling models~\cite{GP15, AG21}.

The GUE-corners process is a determinantal point process~\cite{JN06} with correlation kernel~$K_{\operatorname{GUE}}$ given in~\eqref{eq:kernel_GUE}. Classically, this kernel is expressed in terms of Hermite polynomials; see~\cite{Joh05a, JN06, OR06}. However, we will not use that formulation here, instead we use the double integral representation~\eqref{eq:kernel_GUE} derived in \cite{OR06} (see also \cite{JN06}).

In contrast to the uniform setting, we consider inhomogeneous edge weights. This inhomogeneity persists in the scaling limit and is encoded in the function~$\nu(s,j)$. In particular, Theorem~\ref{thm:limit_correlation_function} shows that we do not observe the GUE-corners process. Instead, we see an inhomogeneous version that we call the \emph{marked GUE-corners process}.
\begin{definition}\label{def:marked_GUE}
Let~$\PP_{\operatorname{GUE}}$ be the GUE-corners process defined on~$\Lambda=\ZZ_{>0}\times \RR$. For a measurable function~$\theta:\Lambda\to [0,1]$, the \emph{marked GUE-corners process}~$\PP_{\operatorname{GUE}}^\theta$ is the point process on~$\Lambda_{\{0,1\}}=\Lambda \times \{0,1\}$ obtained by assigning to each point in a given realization from~$\PP_{\operatorname{GUE}}$ an independent mark~$m\sim \operatorname{Bernoulli}(\theta(t,\mu))$, where~$(t,\mu)\in \Lambda$ is the location of the point. We denote its corresponding point configurations ~$(\xi_{s,j}^t)_{1\leq s\leq t}$.
\end{definition} 
That the mark~$m\sim \operatorname{Bernoulli}(\theta(t,\mu))$ means that~$m$ is distributed as a Bernoulli random variable taking the value~$1$ with probability~$\theta(t,\mu)$ and value~$0$ with probability~$1-\theta(t,\mu)$. 

It was proved in~\cite{CG23} -- for a general determinantal point process under very mild assumptions -- that the point process~$\PP_{\operatorname{GUE}}^\theta$ is a determinantal point process with correlation kernel
\begin{equation}\label{eq:marked_kernel}
K_{\operatorname{GUE}}^\theta(t_1,\mu_1,j_1;t_2,\mu_2,j_2)=\left(\theta(t_1,\mu_1)\delta_{1j_1}+(1-\theta(t_1,\mu_1))\delta_{0j_1}\right)K_{\operatorname{GUE}}(t_1,\mu_1;t_2,\mu_2),
\end{equation}
where~$\delta_{jj'}$ is the Kronecker delta function. 
\begin{remark}
In~\cite{CG23}, the pre-factor in~\eqref{eq:marked_kernel} was combined with the reference measure instead of the correlation kernel. The reason we do not follow their convention is that this formulation naturally appears from our calculations, as seen in Theorem~\ref{thm:correlation_kernel}.  
\end{remark}

We define~$\theta:\Lambda\to [0,1]$ by
\begin{equation}\label{eq:theta}
\theta(t,\mu)=\theta(t)=
\frac{\alpha_{\ell+1-t}\beta_{\ell-t}^{-1}}{1+\alpha_{\ell+1-t}\beta_{\ell-t}^{-1}},
\end{equation}
so that
\begin{equation}
\nu(t,j)=\theta(t)\delta_{1j}+(1-\theta(t))\delta_{0j}.
\end{equation}

Before we can state our main result, we need to embed the interlacing particle system~$\{u_s^t\}_{1\leq s\leq t\leq 2\ell N}$ into~$\Lambda_{\{0,1\}}$. Recall that~$u_s^t=2y+j$ for some~$y=0,\dots,\ell N-1$ and~$j=0,1$. 
\begin{definition}\label{def:marked_interlacing}
Each point~$u_s^t=2y+j$ in a particle realization of~$\PP_{\operatorname{Int}}$ is assigned the mark~$j\in\{0,1\}$. This marked point is denoted by~$u_{s,j}^t$. 
\end{definition}
The marking introduced in the previous definition is not the same type of marking as discussed in Definition~\ref{def:marked_GUE}, in particular, here the marking is deterministic given the location of the particle. 

With the above definitions, our main theorem is a corollary of Theorem~\ref{thm:limit_correlation_function}.
\begin{corollary}[Theorem \ref{thm:1}]\label{cor:week_converence}
Let~$(u_{s,j}^t)_{1\leq s\leq t\leq 2\ell N}$ be the interlacing particles system defined in Definition~\ref{def:marked_interlacing}, and let~$\tau $ and~$\sigma^2$ be defined in~\eqref{eq:turning_point} and~\eqref{eq:sigma_square}. Then
\begin{equation}
\left(\frac{u_{s,j}^{t}-2N\tau }{2\sigma\sqrt{N}}\right)_{1\leq s\leq t}\to \left(\xi_{s,j}^t\right)_{1\leq s\leq t},
\end{equation}
in the sense of weak convergence as~$N\to \infty$, and~$(\xi_{s,j}^t)_{1\leq s\leq t}$ is the marked GUE-corners process with~$\theta$ given in~\eqref{eq:theta}. That is, for any compactly supported continuous functions~$\varphi:\Lambda_{\{0,1\}}\to [0,1]$,
\begin{equation}
\lim_{N\to \infty}\EE_{\operatorname{Int}}\left[\prod_v \left(1-\varphi\left(t_v,\frac{u_{s_v,j_v}^{t_v}-2 N\tau }{2\sigma \sqrt{N}},j_v\right)\right)\right]
=\EE_{\operatorname{GUE}}^\theta\left[\prod_v (1-\varphi(t_v,\xi_{s_v,j_v}^{t_v},j_v))\right],
\end{equation}
where~$\EE_{\operatorname{Int}}$ and~$\EE_{\operatorname{GUE}}^\theta$ are expectations of~$\PP_{\operatorname{Int}}$ and~$\PP_{\operatorname{GUE}}^\theta$, respectively.
\end{corollary}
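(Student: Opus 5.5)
We derive the corollary from Theorem~\ref{thm:limit_correlation_function}, using the standard route from convergence of correlation kernels to weak convergence of determinantal point processes. Write the left-hand side as a Fredholm-type expansion. For $\varphi$ compactly supported in $\Lambda_{\{0,1\}}$, only finitely many levels $t\le T$ and a compact range of $\mu$ are involved. Expanding $\prod_v(1-\varphi)$ in correlation functions gives
\begin{equation}
\EE_{\operatorname{Int}}\Big[\prod_v(1-\varphi)\Big]=\sum_{k\ge0}\frac{(-1)^k}{k!}\sum_{\mathrm b_1,\dots,\mathrm b_k}\prod_{p=1}^k\varphi(\mathrm b_p)\det\big(K_{\operatorname{Int}}(\mathrm b_p;\mathrm b_{p'})\big)_{p,p'=1}^k .
\end{equation}
Here $\varphi(\mathrm b)$ means $\varphi$ evaluated at the rescaled and marked image of the black vertex. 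The determinant is unchanged if we conjugate the kernel by the gauge $g$, since the ratio $g(\mathrm b_p)/g(\mathrm b_{p'})$ cancels in the determinant. So we may replace $K_{\operatorname{Int}}$ by the gauged kernel appearing in Theorem~\ref{thm:limit_correlation_function}.

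Next, pass to the continuum. Each vertex $(2\ell N-t,2y+j)$ corresponds to $\mu$ in an interval of length $N^{-1/2}$ via $y=\lfloor N\tau+\sqrt N\mu\rfloor$. Note that $u=2y+j$, so $(u-2N\tau)/(2\sigma\sqrt N)=\sigma^{-1}\mu+O(N^{-1/2})$. Each sum over $y$ therefore becomes a Riemann sum $\int \d\mu$ with weight $N^{-1/2}$. That weight is exactly the factor $N^{1/2}$ attached to one kernel per row in the theorem. After the change of variables $\mu\mapsto\sigma\mu$, the factor $\sigma^{-1}$ is absorbed by $\d\mu$.

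Theorem~\ref{thm:limit_correlation_function} gives pointwise convergence of the gauged, rescaled kernel to $\nu(t_2,j_2)\sigma^{-1}K_{\operatorname{GUE}}$ off the diagonal $\mu_1=\mu_2$. This diagonal is a null set, except that the diagonal entries $p=p'$ have $\mu_1=\mu_2$ exactly. For those we need a separate argument. One option is to extend the steepest descent analysis to $\mu_1=\mu_2$ and check that the single-integral term gives the correct limit. A cleaner option is to note that for $t_1=t_2$ and $y_1=y_2$ the $N^{1/2}$ factor makes any bounded discrepancy irrelevant, as long as the uniform bound also holds at coinciding points. I expect this treatment of the diagonal (and of near-diagonal pairs inside the Riemann sum) to be the main technical obstacle; I would handle it by proving uniform boundedness including the diagonal and appealing to dominated convergence.

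For convergence of the series, use the uniform boundedness on compacts from Theorem~\ref{thm:limit_correlation_function} together with Hadamard's inequality. The $k$-th term is then bounded by $C^k k^{k/2}\|\varphi\|_1^k/k!$, which is summable. Dominated convergence, termwise and for the series, then gives convergence to
\begin{equation}
\sum_k\frac{(-1)^k}{k!}\sum_{t,j}\int\prod\varphi\,\det\big(\nu(t_{p'},j_{p'})K_{\operatorname{GUE}}(t_p,\mu_p;t_{p'},\mu_{p'})\big)\,\d\mu .
\end{equation}
The determinant with $\nu$ as a column factor equals the one with $\nu$ as a row factor, i.e.\ with kernel $K_{\operatorname{GUE}}^\theta$ from \eqref{eq:marked_kernel}, because $\nu(t,j)=\theta(t)\delta_{1j}+(1-\theta(t))\delta_{0j}$. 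By \cite{CG23} this is the corresponding expansion of $\EE^\theta_{\operatorname{GUE}}[\prod(1-\varphi)]$. Convergence of these multiplicative functionals for all such $\varphi$ yields weak convergence of the point processes.
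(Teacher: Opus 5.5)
Your proposal follows the paper's proof. You expand in correlation functions, note that the gauge cancels in the determinants, turn the $y$-sums into Riemann sums with one factor $N^{1/2}$ absorbed per row, apply dominated convergence using Theorem~\ref{thm:limit_correlation_function}, and identify the limit with the marked kernel. Your remark that $\nu$ can be moved from columns to rows inside the determinant is correct; the paper uses it tacitly.

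The one structural difference is how the series is controlled. The paper does not need Hadamard's inequality. If $\varphi$ is supported on levels $t\le t_0$, at most $\frac12 t_0(t_0+1)$ particles can contribute, so every term with $k>\frac12 t_0(t_0+1)$ vanishes identically for every $N$. Your Hadamard bound is also valid, but it is unnecessary here.

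On the diagonal, you are right that the theorem is stated only for $\mu_1\neq\mu_2$, while the entries $p=p'$ always have $\mu_1=\mu_2$; the paper's ``a.e.'' passes over this. Your fallback, however, does not work as stated. After the $N^{1/2}$ rescaling the diagonal entries are of order one, since they converge to the one-point densities. They also enter every term at every point of the integration domain. So dominated convergence needs their pointwise convergence, not just a uniform bound, and no bounded discrepancy there is negligible. For off-diagonal pairs with $\mu_p=\mu_{p'}$ the uniform bound does suffice, because that set is null.

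Your first option is the correct one, and it costs nothing:
\begin{itemize}
\item When $t_1=t_2$, the single integral $\mathcal J_s$ vanishes identically because of its indicator.
\item Lemmas~\ref{lem:gcan} and~\ref{lem:Jdvalue} never use $\mu_1\neq\mu_2$.
\item For $t_1=t_2$ the limiting double integral does not depend on which side of $\gamma_s'$ the contour $\gamma_l'$ passes. The residue picked up at $z_2=z_1$ is $\frac{1}{2\pi\i}\oint e^{(\mu_1-\mu_2)z}\,\d z=0$.
\end{itemize}
Hence the diagonal entries converge to $\nu(t,j)\sigma^{-1}K_{\operatorname{GUE}}(t,\sigma^{-1}\mu;t,\sigma^{-1}\mu)$, which is exactly what dominated convergence requires.
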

\begin{proof} 
Recall that~$\rho_k^{\operatorname{Int}}$ is the~$k$th correlation function associated with the point process~$\PP_{\operatorname{Int}}$. By definition of correlation functions, we have
\begin{multline}\label{eq:fredholm}
\EE_{\operatorname{Int}}\left[\prod_v \left(1-\varphi\left(t_v,\frac{u_{s_v,j_v}^{t_v}-2 N\tau }{2\sigma\sqrt{N}},j_v\right)\right)\right]
=
\sum_{k=0}^\infty\frac{(-1)^k}{k!}\sum_{\Lambda_N^k}\prod_{v=1}^k \varphi\left(t_v,\frac{2y_v+j_v-2 N\tau }{2\sigma\sqrt{N}},j_v\right) \\
\times \rho_k^{\operatorname{Int}}(\ell x_1+i_1,2y_1+j_1;\dots;\ell x_k+i_k,2y_k+j_k),
\end{multline}
where the second summation runs over all~$(t_1,2y_1+j_1;\dots;t_k,2y_k+j_k)\in (\Lambda_N)^k$, where $\Lambda_N=\{0,\dots,2\ell N-1\}\times \{0,\dots,2\ell N-1\}$. It is natural to divide this summation into three parts: first summing over all~$y_v$, then over all~$j_v$, and last over all~$t_v$,~$v=1,\dots,k$.
By Theorem ~\ref{thm:correlation_kernel} the correlation functions ~$\rho_k^{\operatorname{Int}}$ express as determinants and Theorem  ~\ref{thm:limit_correlation_function} asserts that under the embedding coming from the coordinate choice in \eqref{eqn:zoom_coordinates} they converge to the correlation functions of the marked GUE-corners process, $\rho_k^{\operatorname{GUE},\theta}$, uniformly on compact subsets of $\Lambda_{\{0,1\}}^k$.
Thus it follows from Theorem ~\ref{thm:limit_correlation_function}, that for fixed~$k$, the summation over~$y_v$ converges to an integral:
\begin{multline}\label{eq:riemann_sum}
\lim_{N\to\infty}\sum_{y_1,\dots,y_k}\prod_{v=1}^k \varphi\left(t_v,\frac{2y_v+j_v-2 N\tau }{2\sigma \sqrt{N}},j_v\right)
\rho_k^{\operatorname{Int}}(\ell x_1+i_1,2y_1+j_1;\dots;\ell x_k+i_k,2y_k+j_k) \\
=
\int_{\RR^k}\prod_{v=1}^k \varphi\left(t_v,\sigma^{-1}\mu_v,j_v\right)
\sigma^{-k}\rho_k^{\operatorname{GUE},\theta}(t_1,\sigma^{-1}\mu_1,j_1;\dots;t_k,\sigma^{-1}\mu_k,j_k)\d \mu_1,\dots,\d \mu_k \\
=
\int_{\RR^k}\prod_{v=1}^k \varphi\left(t_v,\mu_v,j_v\right)
\rho_k^{\operatorname{GUE},\theta}(t_1,\mu_1,j_1;\dots;t_k,\mu_k,j_k)\d \mu_1,\dots,\d \mu_k,
\end{multline}
where~$\rho_k^{\operatorname{GUE,\theta}}$ is the~$k$th correlation function associated to the point process~$\PP_{\operatorname{GUE}}^\theta$. Using that~$\varphi$ is compactly supported and that the integrand is uniformly bounded together with the point-wise convergence a.e. in Theorem~\ref{thm:limit_correlation_function}, the above limit is an application of the dominated convergence theorem.

For a compact subset of~$\Lambda_{\{0,1\}}$ containing no point~$(t,\mu,j)$ with~$t>t_0$ for some~$t_0$, contains at most~$\sum_{t=1}^{t_0} t=\frac{1}{2}t_0(t_0+1)$ number of particles, independent of~$N$. This means that in~\eqref{eq:fredholm}, all terms with~$k>\frac{1}{2}t_0(t_0+1)$ are zero. Hence, summing~\eqref{eq:riemann_sum} over all~$j_v$ and all~$t_v$ proves the corollary.
\end{proof}

\begin{remark}\label{rmk:forget_marks}
If we ignore the marks by identifying the two colors, the limiting point process is the classical GUE-corners process. Indeed, if~$\varphi:\Lambda\to [0,1]$ is a compactly supported continuous functions, but now on~$\Lambda$ instead of~$\Lambda_{\{0,1\}}$, the proof of Corollary~\ref{cor:week_converence} still applies. The only difference is that at the very end as we sum over all~$j_v$, we get
\begin{equation}
\sum_{j_1,\dots,j_k}\rho_{\operatorname{GUE}}^\theta(t_1,\mu_1,j_1;\dots;t_k,\mu_k,j_k)=\rho_{\operatorname{GUE}}(t_1,\mu_1;\dots;t_k,\mu_k).
\end{equation}
This leads us to the limit
\begin{equation}
\lim_{N\to \infty}\EE_{\operatorname{Int}}\left[\prod_v \left(1-\varphi\left(t_v,\frac{u_{s_v}^{t_v}-2 N\tau }{2\sqrt{N}}\right)\right)\right]
=\EE_{\operatorname{GUE}}\left[\prod_v (1-\varphi(t_v,\xi_{s_v}^{t_v}))\right].
\end{equation}
\end{remark}

Marked point processes are naturally connected to the notion of \emph{thinning}. A \emph{thinned point process} is obtained by independently deleting each particle of a given process, with a deletion probability that may depend on its location; see~\cite{BP04, BP06}. As discussed (for a general point process) in~\cite{CG23}, the subset of points in the limiting process~$(\xi_{s,j}^t)_{1\leq s\leq t}$ with~$j=1$ forms a thinning of the GUE-corners process, where the probability of deleting a particle is given by the function~$\theta$. Similarly, selecting the points with~$j=0$ corresponds to a thinning with deletion probability~$1-\theta$. This leads to the following corollary.
\begin{corollary}[Corollary \ref{maincor:1}]\label{cor:thinned}
The restriction of the point process~$(u_{s,j}^t)_{1\leq s\leq t\leq 2\ell N}$ to points with~$j=1$ converges, under the same scaling and in the same sense as in Corollary~\ref{cor:week_converence}, to a thinned GUE–corners process with deletion probability~$\theta$.
\end{corollary}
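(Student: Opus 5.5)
We derive Corollary~\ref{cor:thinned} from Corollary~\ref{cor:week_converence} by testing against functions that only see marked points with $j=1$. Let $\psi:\Lambda\to[0,1]$ be continuous and compactly supported, and define $\varphi:\Lambda_{\{0,1\}}\to[0,1]$ by $\varphi(t,\mu,j)=\delta_{1j}\psi(t,\mu)$. Since $\{0,1\}$ is discrete, $\varphi$ is again continuous and compactly supported on $\Lambda_{\{0,1\}}$. The factor $1-\varphi$ equals $1$ on every point with mark $0$, so
\begin{equation}
\prod_v\left(1-\varphi\left(t_v,\tfrac{u^{t_v}_{s_v,j_v}-2N\tau}{2\sigma\sqrt N},j_v\right)\right)=\prod_{v:\,j_v=1}\left(1-\psi\left(t_v,\tfrac{u^{t_v}_{s_v,1}-2N\tau}{2\sigma\sqrt N}\right)\right).
\end{equation}
The left-hand side is exactly the quantity appearing in Corollary~\ref{cor:week_converence}. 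The right-hand side is the corresponding functional of the restricted (mark-$1$) point process, so its expectation is that process's Laplace-type functional at $\psi$. The same identity holds on the limiting side, with $\xi^t_{s,j}$ in place of the rescaled $u^t_{s,j}$.

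Applying Corollary~\ref{cor:week_converence} to this $\varphi$, the $\EE_{\operatorname{Int}}$ of the right-hand side converges to $\EE^\theta_{\operatorname{GUE}}\bigl[\prod_{v:\,j_v=1}(1-\psi(t_v,\xi^{t_v}_{s_v,1}))\bigr]$. This holds for every such $\psi$, which is precisely the mode of convergence used in Corollary~\ref{cor:week_converence}, now applied to the restricted processes.

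It remains to identify the limit. By Definition~\ref{def:marked_GUE}, the points of $\PP^\theta_{\operatorname{GUE}}$ with mark $1$ are obtained from a GUE-corners realization by keeping each point $(t,\mu)$ independently with probability $\theta(t)$. This is by definition an independent thinning of the GUE-corners process, so the restricted limit is the thinned GUE-corners process in the sense of~\cite{BP04, BP06, CG23}.

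As a check, or as an alternative route if one prefers kernels, summing \eqref{eq:marked_kernel} only over $j=1$ gives a determinantal process with kernel $\theta(t_1)K_{\operatorname{GUE}}$, which is the standard thinning kernel. The one point to keep straight is the convention: in the statement's wording, ``deletion probability $\theta$'' refers to the thinning parameter $\theta$ from \eqref{eq:theta} as in the text preceding the corollary, and the actual retention probability of a point with mark $1$ is $\theta(t)$. Beyond this convention there is no real obstacle, since all the analytic content is already contained in Corollary~\ref{cor:week_converence}.
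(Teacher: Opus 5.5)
Your proposal is correct and follows the paper's route: the paper deduces this corollary from Corollary~\ref{cor:week_converence} by observing (with reference to \cite{CG23}) that the mark-$1$ points of the limiting marked GUE-corners process form an independent thinning. Your choice $\varphi(t,\mu,j)=\delta_{1j}\psi(t,\mu)$ simply makes that reduction explicit.

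Your remark about the parameter is also right, but it is not a matter of convention. By Definition~\ref{def:marked_GUE} and \eqref{eq:marked_kernel}, a point receives mark $1$ with probability $\theta(t)$, so mark-$1$ points are retained with probability $\theta$ and deleted with probability $1-\theta$. The wording ``deletion probability $\theta$'' (here, in the surrounding text, and in the introduction) has these two roles swapped, and you should say so outright rather than reinterpret it.
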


\section{Asymptotic analysis of the correlation Kernel }\label{sec:steepest}
In this section we prove Theorem \ref{thm:limit_correlation_function}. Before going on to do so, we establish some notation and elaborate on several key notions to this discussion.

{In the model studied in the current paper, the frozen phase consists of four connected components (one in each corner of the Aztec diamond). Consequently, the part of the arctic curve separating the rough phase from the frozen phase is naturally divided into four connected segments. The boundary of those segments are the \textit{turning points}--the points where the arctic curve touches the boundary of the Aztec diamond.
This was proved in \cite{Ber21} by performing a steepest descent analysis of the kernel from Theorem \ref{thm:correlation_kernel} in the bulk. 
Moreover, the geometry of the arctic curve was described through the action function $F$ defined by 
\begin{equation}
F(z,w, u,v)=
\ell\log(z-1)+\left(u-1\right)\log w-v\log z
\end{equation}

Indeed, it was established that the differential $\d F$ has precisely $4$ simple poles at~$q_0$,~$q_{\infty}$,~$p_0$,~$p_{\infty}$ and~$2\ell$ zeros, with  $2$ zeros on each of the $(\ell-1)$ compact ovals $A_k, k\in\{1,\ldots, \ell-1\}$. The location of the remaining $2$ zeros at $\zeta_1$, $\zeta_2$ determines whether the point $(u,v)$  is in: the \textit{rough phase} 
 $(\overline{\zeta_1}=\zeta_2\in \mathcal{R} \text{ and } \zeta_1,\zeta_2 \not\in \bigcup_{k=0}^{\ell-1} A_k)$; the \textit{frozen phase} 
 $(\zeta_1, \zeta_2\in A_0$, the non-compact oval); the \textit{smooth phase} 
 $(\zeta_1, \zeta_2\in A_k$ for $k\in{1,\ldots,\ell-1}$); or the \textit{arctic curve} ($\zeta_1=\zeta_2\in \cup_{k=0}^{\ell-1} A_k$),  
 of the underlying Aztec diamond.

 Each of the four segments separating the frozen phase from the rough phase corresponds to having $\zeta_1=\zeta_2$ in one of the components of $A_0\backslash \{q_0,q_\infty,p_0,p_\infty\}$. In particular, the turning points correspond to $\zeta_1=\zeta_2=q_0; q_1; p_0; p_{\infty}$.} 
The turning point at which we zoom in the process $\{u_{s}^t\}_{1\leq s\leq t\leq2\ell N}$ has $\zeta_1=\zeta_2=q_{\infty}$. It touches the boundary of the Aztec diamond along the line $2\ell x+i =2\ell N$ and hence has coordinates
 $$(\ell x+ i,2y +j)= (2\ell N, 2 \tau  N).$$
Thus the \textit{action function at the turning point} is given by
\begin{equation}\label{eqn:action_fn_tp}
G(z,w)= F(z,w, 2, \tau )=\ell \log(z-1)+\log w -\tau  \log z,
\end{equation}
and since the two zeros of $dF$ coincide with the pole at $q_{\infty}$, $dG(z,w)$ has a simple zero at $q_{\infty}$.

Now we can rewrite the correlation kernel from Theorem \ref{thm:correlation_kernel} in terms of $G(z,w)$. Prior to Theorem \ref{thm:limit_correlation_function} we introduced new coordinates \eqref{eqn:zoom_coordinates},
and associated them to an embedding of the point process in $\Lambda_{\{0,1\}}$.
 For the convenience of the proof we set
\begin{equation}\label{eqn:proof_coordinates}
(t_p, \mu_p, j_p)= (\ell r_p-i_p, \mu_p, j_p),\quad r_p=0,1,\ldots 2N-1,\quad i_p=0,1,\ldots \ell-1,\quad p=1,2,
\end{equation}
and we identify the compact subsets in these variables with the compact subsets of $\Lambda_{\{0,1\}}^2$.
In these coordinates
the correlation kernel of the process from Theorem \ref{thm:correlation_kernel} is given by
\begin{multline} \label{eqn:kerneljdjs}
K_{\operatorname{Int}}(\ell x_1+i_1, 2y_1+j_1; \ell x_2+i_2, 2y_2+j_2 )
\\
=-\mathcal{J}_{s}(\ell r_1- i_1, 2\mu_1+j_1, \ell r_2-i_2, 2\mu_2+j_2)
+\mathcal{J}_{d}(\ell r_1- i_1, 2\mu_1+j_1, \ell r_2-i_2, 2\mu_2+j_2),
\end{multline}
\begin{equation} \label{eqn:Js}
\mathcal{J}_{s}=
\frac{\one_{\ell r_1-i_1>\ell r_2-i_2}}{(2\pi\i)}\int_{\tilde \Gamma_s}
\boldsymbol{\mathcal{M}}_{i_1,j_1,i_2,j_2}(z,w;z,w) 
w^{r_1-r_2}
z^{(\mu_1-\mu_2)\sqrt{N}}
\frac{\d z}{z}
\end{equation}
and
\begin{equation} \label{eqn:Jd}
\mathcal{J}_{d}=
\frac{1}{(2\pi\i)^2}\int_{\tilde \Gamma_s}\int_{\tilde \Gamma_l}
\boldsymbol{\mathcal{M}}_{i_1,j_1,i_2,j_2}(z_1,w_1;z_2,w_2) 
\exp\left({N(G(z_2,w_2)-G(z_1,w_1))}\right)
\frac{w_1^{r_1}}{w_2^{r_2}}
 \frac{z_1^{\mu_1\sqrt{N}}}{z_2^{\mu_2\sqrt{N}}}
 \frac{\d z_2\d z_1}{z_2(z_2-z_1)}.
\end{equation}
where
\begin{equation}\label{eqn:matrix_part_integrand}
\boldsymbol{\mathcal{M}}_{i_1,j_1,i_2,j_2}\left(z_1,w_1;z_2,w_2\right) =\left(\left(\prod_{m=1}^{2i_1}\phi_m(z_1)\right)^{-1}Q(z_1,w_1)Q(z_2,w_2)\prod_{m=1}^{2i_2}\phi_m(z_2)\right)_{j_1+1,j_2+1},
\end{equation}
for $i_1, i_2=0,1,\ldots \ell-1$ and $j_1, j_2=0, 1$, is used for compactness of notation. For readability of the above expressions we disregarded the difference between $y_p=\lfloor N\tau +\sqrt{N}\mu_p\rfloor$ and $N\tau +\sqrt{N}\mu_p$ for $p=1,2$ since it does not introduce complications and is only notationally more involved.

A double contour integral of the form \eqref{eqn:Jd} can be analysed by the \textit{method of steepest descent/ascent} for double contour integrals. In loose terms, given an integral of this form this method assures that if the curves of integration can be deformed appropriately through a saddle point of $G$, the main contribution to the integral as $N\rightarrow\infty$ comes from a neighbourhood of this saddle point. For a more detailed discussion, see \cite{BB23, Oko03, OR03}.

The case of the turning point of the action function is not typical, since the integrand in \eqref{eqn:Jd}, in fact has a pole at the zero of $dG$ at $q_{\infty}$, so the saddle point analysis has to be performed with extra care. Our method in this setting is close to the approach in  \cite{JN06, OR06}, with the main difference that the contour integrals in the setting of this work are over a higher genus Riemann surface.

Most of the analysis in this section is concerned with $\mathcal{J}_d$. It is divided into
into the following subsections:
\begin{enumerate}
\item[\ref{sec:properties_G}:]  Establishing key properties of $G(z,w)$;
\item[\ref{sec:local_analysis}:] Local analysis at $q_{\infty}$ and some global estimates;
\item[\ref{sec:Jd}:]  Asymptotic analysis of $\mathcal{J}_d$;
\item[\ref{sec:Js}:] Asymptotic analysis of $\mathcal{J}_s$;
\item[\ref{sec:proof}:] Proof of Theorem \ref{thm:limit_correlation_function}.
\end{enumerate}

\subsection{Establishing key properties of $G(z,w)$}\label{sec:properties_G}
Due to the exponential dependence of the integrand in $\mathcal{J}_d$ on $N(G(z_2,w_2)-G(z_1,w_1))$, the function $G(z,w)$ governs the asymptotic behaviour of this double-contour integral. In this section we determine the local behaviour of $\exp(G)$ near $q_{\infty}$, the location of the poles and zeros of the differential of $G$, $\d G$, and some properties of the level lines given by $\exp(\im G)=\exp(\im G(q_{\infty}))$. We remark that although $G$ is not well-defined as a function on $\mathcal{R}$, $\d G$, $\exp(G)$ and $\re G$ are, and in fact the analysis to follow only requires the latter three.   

The correspondence of the zeros of the action function to the regions in the Aztec diamond allows us to define the investigated turning point in a more technical manner solely through the action function, $G$, given by the expression \eqref{eqn:action_fn_tp} above:
\begin{definition}\label{def:turning_point}
A point $(\ell x+i,2y+j)=(2\ell N,2\tau N)$ with $\tau \in (0,\ell)$ is said to be a \emph{turning point} if $\d G$ has a simple zero at $q_\infty$. 
\end{definition}
That the above definition of the turning point is well-defined follows from \cite{Ber21}, as explained in the beginning of Section \ref{sec:steepest}. Moreover in the same work it is explained that there is only one turning point on the right most side of the Aztec diamond, that is, there is a unique $\tau \in (0,\ell)$ so that $\d G$ has a simple zero at $q_\infty$. For completeness, we will prove the latter fact in Section \ref{sec:ty_sigma}, see Proposition \ref{prop:exact_values} below. Going forward, we assume that $\tau $ is such that $(2\ell N,2\tau N)$ is a turning point. 

In the discussion to follow we often need to make a choice of local coordinates on $\mathcal{R}$, that is a local chart. With respect to the description of the Riemann surface in Figure \ref{fig:rs_sheets}, there is a canonical choice of local chart away from branch points given by the projection $(z, w(z))\mapsto z$. 
In the instances that we require a local chart in the analysis to follow, we assume this convention unless explicitly stated otherwise.
That is, we identify the point $(z, w(z))\in\mathcal{R}$ (which is not a branch point) with the point $z\in\mathbb{C}$ locally on the corresponding sheet/copy of the complex plane in this representation of $\mathcal{R}$.

To be able to appropriately deform the contours in \eqref{eqn:Jd} in order to study the asymptotics of $\mathcal{J}_d$ it is important to understand the zeros and poles of $\d G$.
\begin{lemma}
\label{lem:zerospoles}
The set of zeros of $\d G$ is given by precisely $2$ zeros on each compact oval $A_k$, $k=1,\dots,\ell-1$, as well as a simple zero at $q_{\infty}$. The set of poles is given by the simple poles at $p_0$, $p_{\infty}$, $q_0$.
\end{lemma}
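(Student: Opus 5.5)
Since $\d G=\ell\,\frac{\d z}{z-1}+\frac{\d w}{w}-\tau\frac{\d z}{z}$ is a meromorphic differential on the compact surface $\mathcal R$ of genus $g=\ell-1$, the plan is to read off its poles directly and then count zeros via the Riemann--Roch identity
\[
\#\text{zeros}-\#\text{poles}=2g-2=2\ell-4 .
\]

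\textbf{Poles.} The candidates are the points where $z\in\{0,1,\infty\}$ or $w\in\{0,\infty\}$, namely $p_0,p_\infty,q_0,q_\infty$. On $\mathcal R^\circ$ all of $z$, $w$, $z-1$ are finite and nonzero, and at a branch point the differential $\d z/z$ is holomorphic (it in fact vanishes there). So we only need the residues at the four punctures.
\begin{itemize}
\item[] Near $p_0=(0,1)$, $z$ is a local coordinate and $w$ is holomorphic and nonzero. Hence $\d G$ has residue $-\tau\neq0$ there, giving a simple pole.
\item[] At $p_\infty$, with local coordinate $1/z$, the terms $\ell\log(z-1)-\tau\log z$ contribute residue $-(\ell-\tau)$. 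This is nonzero because $\tau<\ell$, so $p_\infty$ is a simple pole.
\item[] At $q_0=(1,0)$ and $q_\infty=(1,\infty)$ we must determine the orders of $z-1$ and $w$. From $P(z,w)=(1-z^{-1})^\ell\det(\Phi-wI)$ and the Newton polygon, the degenerations $w\to0,\infty$ occur only over $z=1$. The two local branches of $w$ there behave like $(z-1)^{\pm\ell}$, since the product of the eigenvalues of $\Phi$ is $1$ and $\Phi$ has a pole of order $\ell$ at $z=1$ coming from the factors $(1-z^{-1})^{-1}$.
\item[] At $q_\infty$, where $w\sim c(z-1)^{-\ell}$, the residue of $\d G$ is $\ell-\ell+0=0$. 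So $q_\infty$ is \emph{not} a pole.
\item[] At $q_0$, where $w\sim c(z-1)^{\ell}$, the residue is $2\ell\neq0$, a simple pole.
\end{itemize}
As a consistency check, the residues $-\tau,\ \tau-\ell,\ 2\ell,\ 0$ should sum to zero. They do not, which means I have slipped a sign or orientation at $p_\infty$ or $q_0$. I will fix this by a careful local computation at each point. The precise values do not matter beyond nonvanishing at $p_0,p_\infty,q_0$ and vanishing at $q_\infty$; the latter is guaranteed by Definition~\ref{def:turning_point}, which says $q_\infty$ is a simple zero. So there are exactly three simple poles.

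\textbf{Zeros.} By the Riemann--Roch identity, $\d G$ has $2\ell-4+3=2\ell-1$ zeros counted with multiplicity. One of these is the simple zero at $q_\infty$ from Definition~\ref{def:turning_point}. The remaining $2\ell-2=2(\ell-1)$ zeros should account for two on each compact oval $A_k$. To see this, note that on the real part of $\mathcal R$ the functions $z$ and $w$ are real, so $\d G$ restricted to $A_k$ is a real $1$-form. Since $A_k$ is a closed curve avoiding the poles and $\exp(G)$ is single valued, $\re G$ restricted to $A_k$ is a smooth function on a circle. Such a function has at least a maximum and a minimum, giving at least two zeros of $\d G$ on each $A_k$. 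Counting gives $1+2(\ell-1)=2\ell-1$, which exhausts the total. Hence every zero is accounted for, each exactly once, so the zeros on the ovals are exactly two each (simple) and the zero at $q_\infty$ is simple.

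\textbf{Main obstacle.} The delicate step is the local analysis at $q_0$ and $q_\infty$: confirming that $w$ has order exactly $\pm\ell$ in a uniformizing parameter and that these points are not branch points. This requires the structure of $\Phi$ near $z=1$. I would first try to compute $\Tr\Phi$'s leading pole using Lemma~\ref{lem:zeros_disc}, where $p(1)\neq0$ shows $z=1$ is not a branch point. Alternatively, I would cite the description of $\d F$ from \cite{Ber21} with $(u,v)=(2,\tau)$, where the coalescence of the two free zeros with the pole at $q_\infty$ cancels that pole.
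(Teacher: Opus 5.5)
Your proposal is essentially the paper's proof. The poles are read off from the logarithmic differentials $\d z/(z-1)$, $\d w/w$, $\d z/z$. The zero count uses $\#\text{zeros}-\#\text{poles}=2g-2$ with $g=\ell-1$. The two zeros on each compact oval come from $\d G$ being real on $A_k$ and $\re G$ being single-valued there; your max/min argument is equivalent to the paper's $\re\oint_{A_k}\d G=0$. Only $e^{\re G}$ is needed here: $e^{G}$ itself is not single-valued, since $z^{-\tau}$ is not.

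\textbf{The residue mismatch.} This is not a sign slip. The points $p_0$ and $p_\infty$ are branch points of $(z,w)\mapsto z$: they are endpoints of cuts, because $z_0=0$ is a zero of $p$ and $\deg p=2\ell-1$ is odd. So $z$ is not a local coordinate at $p_0$, and $1/z$ is not one at $p_\infty$. Instead $z$ vanishes to order two at $p_0$ and has a double pole at $p_\infty$. The correct residues are:
\begin{itemize}
\item $-2\tau$ at $p_0$;
\item $-2(\ell-\tau)$ at $p_\infty$;
\item $2\ell$ at $q_0$;
\item $0$ at $q_\infty$.
\end{itemize}
These do sum to zero. As you noted, only nonvanishing matters, so your conclusion of exactly three simple poles stands.

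\textbf{The step you flagged as the main obstacle.} This is settled by Lemma~\ref{lem:at_1}. There $q(1)=B\neq0$, so $p(1)=q(1)^2\neq 0$ and $z-1$ is a local coordinate at $q_0$ and $q_\infty$. Then \eqref{eq:at_1_2} and \eqref{eq:at_1_2_q0} give $\operatorname{ord}_{q_\infty}w=-\ell$ and $\operatorname{ord}_{q_0}w=\ell$. The paper simply asserts these residues in its proof.
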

\begin{proof}
Explicitly we have
$$\d G=\left(\frac{\ell}{z-1}+\frac{w'(z)}{w(z)}-\frac{\tau }{z}\right)\d z.$$
The differential $\d G$ has simple poles exactly at $p_0=(0,1)$, $p_{\infty}=(\infty,1)$, and $q_0=(1,0)$. The fact that $\d G$ has a pole at $(1,0)$ but not at $(1,\infty)$ follows from the observation that $\frac{w'(z)}{w(z)}\d z=\frac{\d w}{w}$ has a simple pole with residue $+\ell$ at $q_0$ while it has a simple pole with residue $-\ell$ at $q_\infty$. 

Since $\d G$ is a meromorphic differential form on a compact Riemann surface, the number of poles tells us the number of zeros of $\d G$. Indeed, by Abel's theorem, 
\begin{equation}
\#\text{\{zeros of $\d G$\}}-\#\text{\{poles of $\d G$\}}=2g-2,
\end{equation}
where $g$ is the genus of $\mathcal R$. Since $g=\ell-1$ (recall our assumption from Section \ref{sec:kernel}) we conclude that $\d G$ cannot have more zeros than the zeros given in the statement. By Definition \ref{def:turning_point}, the constant $\tau $ is defined so that $\d G$ has a zero at $q_\infty$. So, what remains to show is that $\d G$ has two zeros on $A_k$ for each $k=1,\dots,\ell-1$. 

Note that the real part of $G$ is a continuous well-defined function on $\mathcal R$. Since the differential $\d G$ has no poles on the compact oval $A_k$, for each $k=1,\dots,\ell-1$, we get that
\begin{equation}
\re \left(\int_{A_k} \d G\right)=0.
\end{equation}
Moreover, $\d G$ is real on $A_k$. These two properties imply that $\d G$ has, at least, two zeros on $A_k$. See \cite{DK21, Ber21, BB23} for details. 
\end{proof}
\begin{remark}
The structure of the zeros and poles of $\d G$ is a consequence of the fact that $\d G$ is a, so-called, \emph{imaginary normalized differential} on a, so-called, \emph{M-curve} with simple poles only on $A_0$. See \cite[Lemma 4.2]{Kri13}.
\end{remark}

We mention several properties of the action function we need in our saddle point analysis.
\begin{corollary}
 \label{lem:action_taylor}       
We have that
\begin{equation}\label{eqn:action_fn_at_other_poles}
\re G(0,1)=+\infty, \quad \re G(\infty,1)=+\infty, \quad\text{and} \quad \re G(1,0)= -\infty.
\end{equation}
Furthermore in a neighbourhood of $(z,w)=q_\infty$, with respect to the canonical choice of coordinates,
we denote the function $z\mapsto G(z,w)$ by $G(z)$ (with some abuse of notation). Then we have that
\begin{equation}
 G'(1)=0, \quad\text{and} \quad G''(1)=\sigma^2\quad \text{for some } \sigma^2>0.
\end{equation}
\end{corollary}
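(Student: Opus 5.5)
The plan is to read everything off the explicit form $G=\ell\log(z-1)+\log w-\tau\log z$ together with the local behaviour of $w$ near the special points, as already used in the proof of Lemma~\ref{lem:zerospoles}.

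\emph{Step 1: the three infinite values of $\re G$.} Recall that $\d w/w$ has residue $+\ell$ at $q_0$ and $-\ell$ at $q_\infty$. Near $p_0=(0,1)$ we have $w\to1$ and $z\to0$, so $\re G\sim-\tau\log|z|\to+\infty$ because $\tau>0$. Near $p_\infty=(\infty,1)$ we again have $w\to1$, and $\re G\sim(\ell-\tau)\log|z|$. Since $\tau\in(0,\ell)$, this tends to $+\infty$. Near $q_0=(1,0)$ the residue computation says $w$ vanishes to order $\ell$ in the local coordinate $z-1$, so $\re G\sim 2\ell\log|z-1|\to-\infty$. Equivalently, the residue of $\d G$ at $q_0$ is $\ell+\ell=2\ell>0$.

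\emph{Step 2: the Taylor data at $q_\infty$.} At $q_\infty$ the residue of $\d G$ is $\ell-\ell=0$. Hence $G$ is holomorphic in the local coordinate $z$ near $z=1$ on the relevant sheet, and $w\sim c(z-1)^{-\ell}$ there. By Definition~\ref{def:turning_point}, $\d G$ has a simple zero at $q_\infty$, which gives $G'(1)=0$ and $G''(1)\neq0$.

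\emph{Step 3: positivity of $G''(1)$.} This is the main obstacle. The first task is to check that $G''(1)$ is real. This follows because $q_\infty$ lies on the real oval $A_0$, and $z$ and $w$ are real along $A_0$ near $q_\infty$, with $w$ taking a fixed sign. So $G$ restricted to real $z$ near $1$ is real up to an additive constant, and its second derivative is real.

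For the sign, I would use the zero count of Lemma~\ref{lem:zerospoles}: on $A_0$, the only zero of $\d G$ is $q_\infty$. Parametrize the component of $A_0$ from $p_0$ through $q_\infty$ to $p_\infty$ by real $z\in(0,\infty)$, where the branch of $w$ along $A_0$ through $q_\infty$ is used. By Step 1, $\re G$ equals $+\infty$ at both ends of this arc. Suppose $q_0$ does not lie on this arc. Then $\re G$ is a smooth real function on the arc whose derivative has a single simple zero. It must therefore attain its minimum there, and so $G''(1)>0$.

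The delicate point is the arc itself. One must confirm, from Figure~\ref{fig:rs_sheets}, that $A_0$ meets the cuts only at $p_0$ and at the sheet-change at $p_\infty$. One must also confirm that $q_\infty$ and $q_0$ lie on different halves of $A_0$, namely the two copies of $(0,\infty)$. If they were on the same half, I would instead split at $q_0$ and argue on the subarc from $p_0$ to $q_0$, or from $q_0$ to $p_\infty$, that contains $q_\infty$. That argument is less direct, since $\re G$ is $+\infty$ at one end and $-\infty$ at the other. If needed, I would fall back on the explicit computation of $\sigma^2$ deferred to Section~\ref{sec:ty_sigma} to fix the sign.
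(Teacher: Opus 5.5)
Your proposal is correct and follows essentially the paper's argument. The paper also reads the three limits off $\re G=\ell\log|z-1|+\log|w|-\tau\log|z|$ with $\tau\in(0,\ell)$, gets $G'(1)=0$ and $G''(1)\neq0$ from the simple zero of $\d G$ at $q_\infty$, and fixes the sign using the real function $G$ along the half of $A_0$ through $q_\infty$; the paper uses only the end $z\to\infty$ via monotonicity, whereas you use both ends via a global minimum.

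Your hedge about $q_0$ is unnecessary: $q_0$ and $q_\infty$ both lie over the non-branch point $z=1$, so they are on different halves of $A_0$. Also, $w$ itself changes sign at $q_\infty$ when $\ell$ is odd, but this is harmless because $(z-1)^\ell w>0$ near $q_\infty$ and $\d G$ is real on $A_0$.
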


\begin{proof}
Explicitly we have that
$$\re G= \ell \log|z-1|+\log|w|-\tau \log |z|$$
and all three identities in \eqref{eqn:action_fn_at_other_poles} follow immediately from this expression, recall $\tau\in (0, \ell)$.

Using the local chart described by $(z,w)\mapsto z$ near $q_{\infty}$, due to Lemma \ref{lem:zerospoles}, $G'(z)$ has a zero of order exactly $1$ at $q_{\infty}$ and in particular $G''(1)\not=0$. 
To prove that $G''(1)>0$ we observe that $G$ is real-valued when $z\in(0,\infty)$. As $dG$ has no zeros or poles on $(1,\infty)$ due to Lemma \ref{lem:zerospoles}, it follows that $G$ is monotone on this interval and since $G(\infty)=+\infty$, $G$ is increasing and so $G''(1)>0$.
\end{proof}

\begin{remark}
It is not important for our argument, but we can explicitly compute 
$$G(1)=\log\left( \prod_{m=1}^\ell(1+\alpha_m^{-1}\beta_m)(1+\alpha_{m+1}\beta_m^{-1})\right),$$
see \eqref{eq:at_1_2} in Lemma \ref{lem:at_1}.
 and $G''(1)$, which is computed in Proposition \ref{prop:exact_values}.
\end{remark}
In the proof of Corollary \ref{lem:action_taylor} we showed that $G=\re G$ is monotone increasing along the interval $(1,\infty)$ with respect to the local chart described before and we denote the curve corresponding to this interval $(q_{\infty}, p_{\infty})$ and we could in the same way see that $\re G$ is monotone increasing on the curve going along the interval from $q_{\infty}$ to $p_0$; these curves are usually referred to as \textit{curves of steepest ascent} for $ G$. Likewise the \textit{curves of steepest descent} of $G$ have the properties that $\re G$ is decreasing along them and $\exp(\im G)$ is constant along them, and these properties characterise these curves. They are important for the upcoming double-contour integral analysis and we describe them in the next lemma. 

\begin{lemma}
 \label{lemma:curvesofsteepest}
There are two curves of steepest descent of $G$ starting at $q_{\infty}$ and ending at $q_0$ which together form a closed contour 
such that $p_0$ lies within its interior and $p_{\infty}$-- within its exterior. The argument of $z$ in a parameterisation of the curve changes by exactly $4\pi$ as we tread around it once. In particular this contour is bounded away from $p_0$ and $p_\infty$, has finite length, and treads around $p_0$ exactly once on $\mathcal{R}$. We denote it $\Gamma_{des}$. 
\end{lemma}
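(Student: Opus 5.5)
The plan is to study the level set $\{\im G = \im G(q_\infty)\}$ (mod $2\pi$, i.e.\ the set where $\exp(\i\,\im G)$ takes its value at $q_\infty$) near $q_\infty$ and then follow its branches globally, using Lemma~\ref{lem:zerospoles} and Corollary~\ref{lem:action_taylor}.

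\emph{Local picture at $q_\infty$.} Since $G'(1)=0$ and $G''(1)=\sigma^2>0$ in the canonical chart, $G(z)-G(1)\approx \frac{\sigma^2}{2}(z-1)^2$. Hence four level-line branches leave $q_\infty$:
\begin{itemize}
\item two along the real axis, namely the steepest ascent curves towards $p_\infty$ and $p_0$;
\item two in the directions $\pm\i$, which are steepest descent curves, one entering the upper half plane and one the lower half plane of the sheet containing $q_\infty$.
\end{itemize}

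\emph{Global continuation.} Along a steepest descent curve $\re G$ is strictly decreasing, and the curve can only terminate at a zero or a pole of $\d G$. It cannot stop at a zero on a compact oval $A_k$. There $\d G$ is real, so $\im G$ is locally constant along $A_k$, and a level line arriving at such a zero would force it to lie on the level set through $A_k$. One rules this out by comparing the value of $\im G$ on the ovals, where it equals a multiple of $\pi$ up to the normalization, with $\im G(q_\infty)$. Alternatively, one argues that a descent line hitting a saddle would continue anyway with $\re G$ still decreasing, and is then forced on by the same argument.

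The descent curve also cannot reach $p_0$ or $p_\infty$, where $\re G=+\infty$. Since $\re G$ is decreasing and $\mathcal R$ is compact, the curve must accumulate at a point where $\re G=-\infty$. The only such point is $q_0$, so both descent curves end at $q_0$. Near $q_0$, $\d G$ has a simple pole with residue $-\ell$ (coming from $\ell\log(z-1)$ and $\log w$). Level curves of $\im G$ therefore approach $q_0$ radially, and the two branches arrive from different directions. Together they form a closed curve $\Gamma_{des}$ of finite length, bounded away from $p_0$ and $p_\infty$.

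\emph{Topology.} The curve is invariant under the anti-holomorphic involution (complex conjugation), since $G$ is real on $A_0$. Hence the lower branch is the conjugate of the upper one, and $\Gamma_{des}$ crosses the real part of $\mathcal R$ only at $q_\infty$ and $q_0$. These lie on the two sheets, on the positive axis on either side of $p_0$. It follows that $\Gamma_{des}$ separates $p_0$ from $p_\infty$: the real segment from $q_\infty$ to $p_0$ (an ascent curve) stays inside, and the segment to $p_\infty$ stays outside.

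For the winding statement, traversing $\Gamma_{des}$ passes through a cut, going from the sheet with $q_\infty$ to the sheet with $q_0$ and back. Projecting to the $z$-plane, each half-loop winds once around $0$, so the total change of $\arg z$ is $4\pi$, i.e.\ a single loop around $p_0$ on $\mathcal R$. I would verify this using the argument principle for $\im G$. The increment of $\im G$ around $\Gamma_{des}$ is accounted for by the residues enclosed: $-\tau$ times the winding of $\log z$, plus the contribution of the $\log w$ and $\log(z-1)$ poles at $q_0$ and $q_\infty$, which lie on the curve.

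\emph{Main obstacle.} The hardest step is excluding the scenario where a descent branch runs into a zero of $\d G$ on a compact oval, or crosses the cuts in a more complicated way. I expect to handle this with the conjugation symmetry plus a counting argument. Each compact oval carries exactly two zeros, which are saddles with real level lines along the oval. A descent curve from $q_\infty$ meeting one would create a closed level curve of $\im G$ bounding a region containing no pole. This contradicts the maximum principle for the harmonic function $\re G$.
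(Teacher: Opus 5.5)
The gap is exactly at the step you call the main obstacle. You cannot exclude that a descent branch from $q_\infty$ runs into a zero of $\d G$ on a compact oval, because for some admissible weights it does.

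Your first observation is correct: $\d G$ is real on $A_k$, so $A_k$ is itself a level line of $\im G$ and can only be reached at a zero of $\d G$. Now normalize $\im G(q_\infty)=0$ and continue through the upper half plane of the $q_\infty$-sheet; there $\arg z=\pi$ on $A_k$ and $(z-1)^\ell w$ is real. This gives $\im G|_{A_k}=\pi(m_k-\tau)$ with $m_k\in\ZZ$. So your comparison only excludes oval zeros when $\tau\notin\ZZ$.

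In the two-periodic Aztec diamond ($\ell=2$, $\tau=1$) the comparison fails. There $\Tr\Phi(1/z)=\Tr\Phi(z)$, and $G(1/z,w)-G(z,w)$ is a purely imaginary constant. Hence the antiholomorphic involution $(z,w)\mapsto(1/\bar z,\bar w)$ preserves $\mathcal R$, $\re G$, the white region and $q_\infty$. The descent curve from $q_\infty$ is therefore pointwise fixed: it is the upper unit half-circle on the $q_\infty$-sheet. It ends at $z=-1$ on the oval $[z_2,z_1]$ (here $z_1z_2=1$), and this point is a zero of $\d G$.

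Your maximum-principle argument is not valid. Meeting such a zero creates no closed level curve bounding a pole-free region; the only closed piece of that level set is $A_k$ itself, which does not separate $\mathcal R$. For the same reason, your claim that ``the curve must accumulate where $\re G=-\infty$'' overlooks finite critical points. Your topological claim that $\Gamma_{des}$ meets the real part of $\mathcal R$ only at $q_\infty$ and $q_0$ is also false in this case.

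What is missing is the paper's case (2), which you mention only in passing (``it would continue anyway''). A level line can reach $A_k$ only at the zero where $\re G|_{A_k}$ is maximal, since there the normal direction ascends and the oval directions descend. From there you must continue along $A_k$, where $\im G$ is constant and $\re G$ decreases, to the other zero, where $G''>0$ in the canonical chart. You then leave that zero in the normal direction into the white region. Strict monotonicity of $\re G$ makes this happen only finitely often, so the curve still ends at $q_0$. The count $\Delta\arg z=2\pi$ per branch survives, because the positive real axes of both sheets are invariant and cannot be crossed. Your separation argument then has to be redone for a contour that contains arcs of compact ovals.

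A minor slip: the residue of $\d G$ at $q_0$ is $2\ell$, not $-\ell$, since $\ell\,\d z/(z-1)$ and $\d w/w$ each contribute $+\ell$. A residue of $-\ell$ would give $\re G\to+\infty$ at $q_0$.
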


\begin{proof}
Now due to Corollary \ref{lem:action_taylor}, near $q_{\infty}$, in the local chart, described in the beginning of this section,
$$G(z)=G(q_{\infty})+\frac{\sigma^2}{2}(z-1)^2+\mathcal{O}((z-1)^3)$$
and in particular locally the level lines of 
$\exp(\im G)$ are close to the corresponding level lines of 
\\ $\exp(\im \frac{\sigma^2}{2}(z-1)^2)$, given by $\re z=1,  \im z=0$. Thus there are $4$ level lines of $\exp(\im G)$ starting at $q_{\infty}$ in 4 different directions.
 \item 

Since we are constructing a curve starting at $q_{\infty}$ along the level line of $\exp(\im G)$ on which $\re G$ is strictly decreasing, due to \eqref{eqn:action_fn_at_other_poles}, this curve is bounded away from $p_0$ and $p_{\infty}$. 
The two curves of steepest descent are symmetric with respect to the copy of the real line (coming from $(z,w)\mapsto z$) on each sheet of the surface, since $\re {G}(z,w)= \re G(\overline{z}, \overline{w})$ and so we restrict our attention to one of them. 
We argue that there is a curve of steepest descent starting at $q_{\infty}$ and treading first locally up along the imaginary axis and ends at $q_0$ so that it is contained in the white region of the underlying Riemann surface in Figure
\ref{fig:rs_sheets}, see Figure \ref{fig:schematic_curve}. 
Namely whenever the curve intersects a copy of the real line either:
\begin{enumerate}
\item it passes through a cut in which case we choose to continue treading along the the other sheet of the Riemann surface in the white region;
\item it passes through a compact oval in which case, since $\im G= constant$ on the compact oval we can let the curve thread towards a zero on that compact oval along the real line. Since for at least one of the zeros, $\zeta$, on the compact oval it would hold that $G''(\zeta)>0$ (with respect to the canonical local chart) we can eventually choose the contour of steepest descent at that zero to continue to tread inside the white region in Figure \ref{fig:rs_sheets}, see Figure \ref{fig:schematic_curve}; 
\item it intersects the non-compact oval on the sheet containing $q_0$ and then by the same argument as in the proof of Lemma \ref{lem:action_taylor}, the contour of steepest descent threads along the corresponding direction of the real line towards $q_0$ and terminates there. 
\end{enumerate}
As the so-constructed curve does not self-intersect (since the real part is monotone), any of the above cases can only emerge finitely many times and the curve eventually terminates at $q_0$, see Figure  \ref{fig:schematic_curve}.
Thus on this piece of the contour of steepest descent the angle with respect to $p_0$ changes by exactly $2\pi$ and the curve constructed from the union of its reflection and itself, threads around $p_0$ exactly once on $\mathcal{R}$.
\color{black}
\end{proof}

\begin{figure}
\begin{tikzpicture}[>=stealth, scale=0.84]

  \begin{scope}[shift={(-5cm,0)}]
  
    \begin{scope}
      \clip (-6,-2.8) rectangle (3,0);
      \fill[gray!10] (-6,-2.9) rectangle (3,0);
    \end{scope}
    
    \draw[thick] (-6,0) -- (-5,0);
    \draw[thick] (-4,0) -- (-3,0);
    \draw[thick] (-2.5,0) -- (-1,0);
    \draw[thick] (0,0) -- (3,0);
    
    \draw[dashed, thick] (-5,0) -- (-4,0);
    \draw[dashed, thick] (-3,0) -- (-2.5,0);
    \draw[dashed, thick] (-1,0) -- (0,0);
    
    \fill (0,0) circle (2pt) node[below] {$p_0$};
    \fill (1,0) circle (2pt) node[below] {$q_{\infty}$};
    \fill (3,0) circle (2pt) node[below] {$p_{\infty}$};
    
    \draw[red, thick, decoration={markings, mark=at position 0.5 with {\arrow{>}}}, postaction={decorate}] 
          (1,0) .. controls (1,0.7) and (1.2,1.4) .. (0.9,1.9) 
          .. controls (0.5,2.2) and (0.1,1.8) .. (-0.2,2.1) 
          .. controls (-0.6,2.4) and (-0.9,2.0) .. (-1.1,1.6) 
          .. controls (-1.3,1.1) and (-1.2,0.5) .. (-1.5,0);
    
    \draw[red, thick, decoration={markings, mark=at position 0.7 with {\arrow{>}}}, postaction={decorate}] 
          (-1.5,0) -- (-2,0);
    
    \draw[red, thick, decoration={markings, mark=at position 0.5 with {\arrow{>}}}, postaction={decorate}] 
          (-2,0) .. controls (-2,0.6) and (-1.8,1.2) .. (-2.1,1.7) 
          .. controls (-2.4,2.1) and (-2.9,1.9) .. (-3.2,2.2) 
          .. controls (-3.6,2.5) and (-3.9,2.1) .. (-4.1,1.5) 
          .. controls (-4.3,1.0) and (-4.6,0.6) .. (-4.5,0);
    
    \draw[orange, thick, dashed, decoration={markings, mark=at position 0.5 with {\arrow{<}}}, postaction={decorate}] 
          (1,0) .. controls (1,-0.7) and (1.2,-1.4) .. (0.9,-1.9) 
          .. controls (0.5,-2.2) and (0.1,-1.8) .. (-0.2,-2.1) 
          .. controls (-0.6,-2.4) and (-0.9,-2.0) .. (-1.1,-1.6) 
          .. controls (-1.3,-1.1) and (-1.2,-0.5) .. (-1.5,0);
    
    \draw[orange, thick, dashed, decoration={markings, mark=at position 0.3 with {\arrow{<}}}, postaction={decorate}] 
          (-1.5,0) -- (-2,0);
    
    \draw[orange, thick, dashed, decoration={markings, mark=at position 0.5 with {\arrow{<}}}, postaction={decorate}] 
          (-2,0) .. controls (-2,-0.6) and (-1.8,-1.2) .. (-2.1,-1.7) 
          .. controls (-2.4,-2.1) and (-2.9,-1.9) .. (-3.2,-2.2) 
          .. controls (-3.6,-2.5) and (-3.9,-2.1) .. (-4.1,-1.5) 
          .. controls (-4.3,-1.0) and (-4.6,-0.6) .. (-4.5,0);
    
    \fill (-1.5,0) circle (1.5pt);
    \fill (-2,0) circle (1.5pt);
    \fill (-4.5,0) circle (1.5pt);
    
  \end{scope}
  
  \begin{scope}[shift={(5cm,0)}]
  
    \begin{scope}
      \clip (-6,0) rectangle (3,2.8);
      \fill[gray!10] (-6,0) rectangle (3,2.9);
    \end{scope}
    
    \draw[thick] (-6,0) -- (-5,0);
    \draw[thick] (-4,0) -- (-3,0);
    \draw[thick] (-2.5,0) -- (-1,0);
    \draw[thick] (0,0) -- (3,0);
    
    \draw[dashed, thick] (-5,0) -- (-4,0);
    \draw[dashed, thick] (-3,0) -- (-2.5,0);
    \draw[dashed, thick] (-1,0) -- (0,0);
    
    \fill (0,0) circle (2pt) node[below] {$p_0$};
    \fill (1,0) circle (2pt) node[below] {$q_0$};
    \fill (3,0) circle (2pt) node[below] {$p_{\infty}$};
    
    \draw[red, thick, decoration={markings, mark=at position 0.5 with {\arrow{>}}}, postaction={decorate}] 
          (-4.5,0) .. controls (-4.5,-0.5) and (-4.3,-1.0) .. (-4.1,-1.2) 
          .. controls (-3.9,-1.3) and (-3.8,-1.1) .. (-3.7,-0.8) 
          .. controls (-3.7,-0.5) and (-3.8,-0.2) .. (-3.8,0);
    
    \draw[red, thick, decoration={markings, mark=at position 0.7 with {\arrow{>}}}, postaction={decorate}] 
          (-3.8,0) -- (-3.2,0);
    
    \draw[red, thick, decoration={markings, mark=at position 0.5 with {\arrow{>}}}, postaction={decorate}] 
          (-3.2,0) .. controls (-3.2,-0.5) and (-3.0,-0.9) .. (-2.6,-1.1) 
          .. controls (-2.2,-1.3) and (-1.9,-1.2) .. (-1.6,-1.0) 
          .. controls (-1.3,-0.8) and (-1.0,-1.1) .. (-0.7,-1.2) 
          .. controls (-0.3,-1.3) and (0.2,-1.0) .. (0.5,-0.7) 
          .. controls (0.8,-0.4) and (1,-0.3) .. (1,0);
    
    \draw[orange, thick, dashed, decoration={markings, mark=at position 0.5 with {\arrow{<}}}, postaction={decorate}] 
          (-4.5,0) .. controls (-4.5,0.5) and (-4.3,1.0) .. (-4.1,1.2) 
          .. controls (-3.9,1.3) and (-3.8,1.1) .. (-3.7,0.8) 
          .. controls (-3.7,0.5) and (-3.8,0.2) .. (-3.8,0);
    
    \draw[orange, thick, dashed, decoration={markings, mark=at position 0.3 with {\arrow{<}}}, postaction={decorate}] 
          (-3.8,0) -- (-3.2,0);
    
    \draw[orange, thick, dashed, decoration={markings, mark=at position 0.5 with {\arrow{<}}}, postaction={decorate}] 
          (-3.2,0) .. controls (-3.2,0.5) and (-3.0,0.9) .. (-2.6,1.1) 
          .. controls (-2.2,1.3) and (-1.9,1.2) .. (-1.6,1.0) 
          .. controls (-1.3,0.8) and (-1.0,1.1) .. (-0.7,1.2) 
          .. controls (-0.3,1.3) and (0.2,1.0) .. (0.5,0.7) 
          .. controls (0.8,0.4) and (1,0.3) .. (1,0);
             
    \fill (-4.5,0) circle (1.5pt);
    \fill (-3.8,0) circle (1.5pt);
    \fill (-3.2,0) circle (1.5pt);
    \fill (1,0) circle (1.5pt);
    
  \end{scope}

\end{tikzpicture}
\caption{Schematic representation of the curve of steepest descent on ~$\mathcal R$ constructed in the proof of Lemma \ref{lemma:curvesofsteepest} in red together with its reflection in orange. The image on the left represents the curve on the sheet of ~$\mathcal R$ containing $q_{\infty}$ and the image on the right represents the part of the curve on the sheet containing ~$q_0$. The red curve was constructed to exclusively tread in the non-shaded region in the picture.} \label{fig:schematic_curve}
\end{figure}
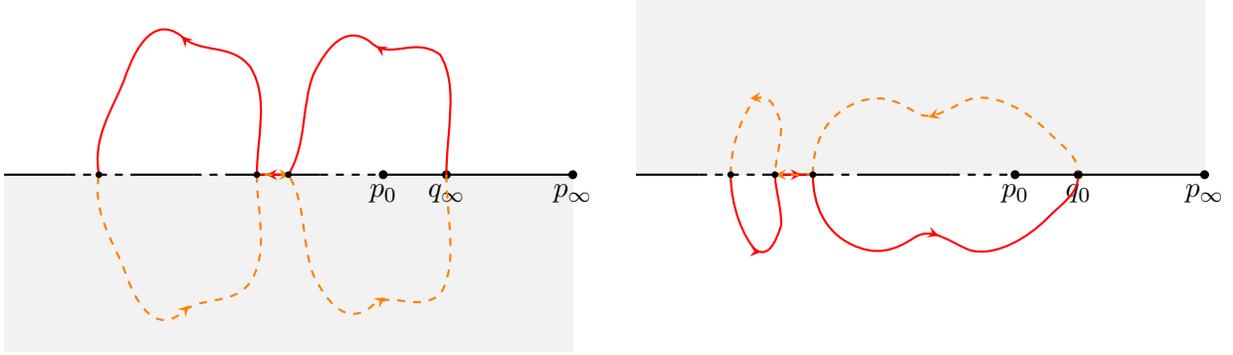

We now have sufficient information about the action function to proceed with studying the double-contour integrals. 

\subsection{Local analysis at $q_{\infty}$ and some global estimates}\label{sec:local_analysis}
In this section, we compute the asymptotic behaviour of the integrand in the double contour integral in a neighbourhood of~$q_\infty$. We also prove global bounds necessary to prove that the main contribution from the double contour integral comes from a neighbourhood of~$q_\infty$. We first study separately the matrix part of the  integrand, $\boldsymbol{\mathcal{M}}_{i_1,j_1,i_2,j_2}$, depending only on the $i_k, j_k,$ for $k=1,2$.
 After that we perform the local analysis for the rest of the integrand (which depends on $N$).

We will use the following identities that can be found either in~\cite{Ber21} or~\cite{BNR25}. For completeness, we state and prove these identities here. 
\begin{lemma}\label{lem:at_1}
The following identities hold:
\begin{equation}\label{eq:at_1_1}
Q(q_\infty)=\frac{1}{1+\beta_\ell^{-1} \alpha_1}
\begin{pmatrix}
1 \\
\alpha_1
\end{pmatrix}
\begin{pmatrix}
1 & \beta_\ell^{-1}
\end{pmatrix},
\end{equation}
\begin{equation}\label{eq:at_1_2}
\left.(z-1)^\ell w\right|_{(z,w)=q_{\infty}}=\prod_{m=1}^\ell(1+\alpha_m^{-1}\beta_m)(1+\alpha_{m+1}\beta_m^{-1}),
\end{equation}
\begin{equation}\label{eq:at_1_2_q0}
\left.(z-1)^{-\ell} w\right|_{(z,w)=q_{0}}=\left(\prod_{m=1}^\ell(1+\alpha_m^{-1}\beta_m)(1+\alpha_{m+1}\beta_m^{-1})\right)^{-1},
\end{equation}
and
\begin{equation}\label{eq:at_1_3}
\left.(z-1)^{i-i'}\prod_{m=2i'+1}^{2i}\phi_m(z)\right|_{z=1}
=\prod_{m=i'+1}^i(1+\alpha_m^{-1}\beta_m)\prod_{m=i'+1}^{i-1}(1+\alpha_{m+1}\beta_m^{-1})
\begin{pmatrix}
1 \\
\alpha_{i'+1}
\end{pmatrix}
\begin{pmatrix}
1 & \beta_i^{-1}
\end{pmatrix},
\end{equation}
for~$0\leq i'<i\leq \ell$.
\end{lemma}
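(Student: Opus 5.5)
The plan is to treat everything as a local computation at $z=1$. The source of all the rank-one structure is the pole of $\phi_{2i}$ there. Since $\frac{1}{1-z^{-1}}=\frac{z}{z-1}$, we have
\begin{equation}
\left.(z-1)\phi_{2i}(z)\right|_{z=1}=\begin{pmatrix}1\\ \beta_i\end{pmatrix}\begin{pmatrix}1 & \beta_i^{-1}\end{pmatrix},
\end{equation}
which has rank one, while $\phi_{2i-1}(1)$ is finite. I would prove \eqref{eq:at_1_3} first and then deduce the other three identities from it.

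\textbf{Proof of \eqref{eq:at_1_3}, by induction on $i-i'$.} Write the product $\phi_{2i'+1}\phi_{2i'+2}\cdots\phi_{2i}$ evaluated at $z=1$, with each even factor replaced by its rank-one residue above. The first odd factor gives
\begin{equation}
\phi_{2i'+1}(1)\begin{pmatrix}1\\ \beta_{i'+1}\end{pmatrix}=(1+\alpha_{i'+1}^{-1}\beta_{i'+1})\begin{pmatrix}1\\ \alpha_{i'+1}\end{pmatrix},
\end{equation}
which produces the left vector $(1,\alpha_{i'+1})^T$ and the first scalar. At each interior junction $k=i'+1,\dots,i-1$ a scalar appears:
\begin{equation}
\begin{pmatrix}1 & \beta_k^{-1}\end{pmatrix}\phi_{2k+1}(1)\begin{pmatrix}1\\ \beta_{k+1}\end{pmatrix}=(1+\alpha_{k+1}\beta_k^{-1})(1+\alpha_{k+1}^{-1}\beta_{k+1}).
\end{equation}
This is a direct expansion: the left side equals $1+\beta_k^{-1}\alpha_{k+1}+\alpha_{k+1}^{-1}\beta_{k+1}+\beta_k^{-1}\beta_{k+1}$, which factors as stated. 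The last row vector is $(1,\beta_i^{-1})$. Multiplying out yields exactly the claimed scalar times $(1,\alpha_{i'+1})^T(1,\beta_i^{-1})$.

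\textbf{Proof of \eqref{eq:at_1_2} and \eqref{eq:at_1_2_q0}.} Take $i'=0$ and $i=\ell$ in \eqref{eq:at_1_3}, using periodicity $\alpha_{\ell+1}=\alpha_1$. This gives $(z-1)^\ell\Phi(z)\to c\,uv^T$, where $u=(1,\alpha_1)^T$, $v^T=(1,\beta_\ell^{-1})$ and
\begin{equation}
c=\prod_{m=1}^\ell(1+\alpha_m^{-1}\beta_m)\prod_{m=1}^{\ell-1}(1+\alpha_{m+1}\beta_m^{-1}).
\end{equation}
Since $\det\Phi=1$, the two eigenvalues $w,w'$ satisfy $ww'=1$ and $w+w'=\Tr\Phi$. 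Hence at $q_\infty$ the large eigenvalue obeys
\begin{equation}
(z-1)^\ell w\to c\,v^Tu=c\,(1+\alpha_1\beta_\ell^{-1}),
\end{equation}
while $(z-1)^\ell w'\to0$. The extra factor $1+\alpha_1\beta_\ell^{-1}$ is precisely the missing $m=\ell$ term of the second product, which gives \eqref{eq:at_1_2}. At $q_0$ we have $w=1/w'$ with $w'$ the large eigenvalue, so \eqref{eq:at_1_2_q0} is the reciprocal of \eqref{eq:at_1_2}.

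\textbf{Proof of \eqref{eq:at_1_1}.} For a $2\times2$ matrix, $\adj(A)=\Tr(A)I-A$. Therefore
\begin{equation}
\adj(wI-\Phi)=\Phi-w'I \quad\text{and}\quad \partial_w\det(wI-\Phi)=2w-\Tr\Phi=w-w',
\end{equation}
so $Q=(\Phi-w'I)/(w-w')$, which is the spectral projector onto the $w$-eigenvector. Multiply numerator and denominator by $(z-1)^\ell$ and let $z\to1$ on the sheet of $q_\infty$, using $(z-1)^\ell w'\to0$. The limit is
\begin{equation}
Q(q_\infty)=\frac{c\,uv^T}{c\,v^Tu}=\frac{uv^T}{1+\alpha_1\beta_\ell^{-1}},
\end{equation}
which is \eqref{eq:at_1_1}.

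\textbf{Main obstacle.} The only delicate point is the bookkeeping that the scalar products telescope with the correct index ranges, in particular the boundary terms at $i'+1$ and at $i$. The limits themselves are legitimate because $\Phi$ is meromorphic in $z$ and the eigenvalue branch near $q_\infty$ is determined by its growth rate, so no branch ambiguity arises.
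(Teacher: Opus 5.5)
Your proof is correct and follows essentially the same route as the paper. Both arguments use the rank-one factorization of $(z-1)\phi_{2m}(z)$ at $z=1$ to get \eqref{eq:at_1_3}; the paper also factors $\phi_{2m-1}(1)$, which you show is unnecessary. Both then use the trace together with $\det\Phi=1$ to isolate $(z-1)^\ell w$ at $q_\infty$ and its reciprocal at $q_0$, and the limit of the rescaled adjugate to get $Q(q_\infty)$. Your formula $Q=(\Phi-w'I)/(w-w')$, obtained from $\adj(A)=\Tr(A)I-A$, is only a cosmetic repackaging of the paper's direct evaluation of $\adj\left(I-uv^T/(v^Tu)\right)$.
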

\begin{proof}
We begin by proving~\eqref{eq:at_1_3}. Recall the definition~\eqref{eq:transition_matrices} of~$\phi_m$. For~$m=1,\dots,\ell$,
\begin{equation}
\phi_{2m-1}(1)=
\begin{pmatrix}
1 \\
\alpha_m
\end{pmatrix}
\begin{pmatrix}
1 & \alpha_m^{-1}
\end{pmatrix}
\quad \text{and} \quad 
(z-1)\phi_{2m}(z)|_{z=1}=
\begin{pmatrix}
1 \\
\beta_m
\end{pmatrix}
\begin{pmatrix}
1 & \beta_m^{-1}
\end{pmatrix}.
\end{equation}
Multiplying these equalities for~$m=i'+1,\dots,i$, we obtain the fourth equality in the statement.

Taking the trace of~\eqref{eq:at_1_3} with~$i'=0$ and~$i=\ell$, we get
\begin{equation}\label{eq:at_1_trace}
\Tr\left((z-1)^\ell \Phi(z)|_{z=1}\right)=\prod_{m=1}^\ell(1+\alpha_m^{-1}\beta_m)(1+\alpha_{m+1}\beta_m^{-1}).
\end{equation}
On the other hand,
\begin{equation}\label{eq:trace_eigenvalues}
\Tr\left((z-1)^\ell \Phi(z)\right)=(z-1)^\ell(w_1(z)+w_2(z)),
\end{equation}
where~$(z,w_1(z))$ and~$(z,w_2(z))$ are local coordinates in a neighborhood of~$q_\infty=(1,\infty)$ and~$q_0=(1,0)$, respectively. We are using here that~$w_1(z)$ and~$w_2(z)$ are the two eigenvalues of~$\Phi(z)$ close to~$z=1$. If we take~$z\to 1$ in~\eqref{eq:trace_eigenvalues}, the right-hand side tends to~$(z-1)^\ell w_1(z)$, and combining this limit with~\eqref{eq:at_1_trace}, leads to~\eqref{eq:at_1_2}.

Since $\det \Phi(z)=1$, we have $w_1(z)w_2(z)=1$ where $w_1$ and $w_2$ are as above. So, \eqref{eq:at_1_2_q0} follows from \eqref{eq:at_1_2} by taking $z\to 1$ in the equality
\begin{equation}
1=(z-1)^\ell w_1(z) (z-1)^{-\ell} w_2(z).
\end{equation}

To prove~\eqref{eq:at_1_1}, we first note that~$\partial_w\det(wI-\Phi(z))=2w-\Tr\Phi(z)$. Recall also~\eqref{eq:adjuagate_matrix}. It follows from~\eqref{eq:at_1_2},~\eqref{eq:at_1_trace}, and~\eqref{eq:at_1_3} with~$i'=0$ and~$i=\ell$, that
\begin{equation}
\left.\frac{\adj\left(wI-\Phi(z)\right)}{\partial_w\det(wI-\Phi(z))}\right|_{(z,w)=q_\infty}=\adj\left(I-\frac{1}{1+\alpha_1\beta_\ell^{-1}}
\begin{pmatrix}
1 & \beta_\ell^{-1} \\
\alpha_1 & \alpha_1\beta_\ell^{-1}
\end{pmatrix}\right)
=
\frac{1}{1+\alpha_1\beta_\ell^{-1}}
\begin{pmatrix}
1 & \beta_\ell^{-1} \\
\alpha_1 & \alpha_1\beta_\ell^{-1}
\end{pmatrix},
\end{equation}
which proves~\eqref{eq:at_1_1}. 
\end{proof}

We continue to provide the leading behavior of the matrix part of the integrand.
\begin{lemma}\label{lem:matrix_local}
For $(z_1,w_1)$ and $(z_2,w_2)$ in a neighbourhood of $q_{\infty}$, let
~$z_i=1+\frac{Z_i}{N^\frac{1}{2}}$,~$i=1,2$, and set
\begin{equation}
\tilde g(\ell x+i,2y+j)=\prod_{m=1}^i(1+\alpha_m^{-1}\beta_m)(1+\alpha_{m+1}\beta_m^{-1})\alpha_{i+1}^{-j},
\end{equation}
and let~$\nu$ be given in~\eqref{eq:period_function}. Then, as~$N\to \infty$, 
\begin{multline} \label{eqn:matrix_precise}
\boldsymbol{\mathcal{M}}_{i_1,j_1,i_2,j_2}\left(z_1,w_1;z_2,w_2\right)
= \nu(\ell-i_2,j_2)\frac{Z_1^{i_1}}{Z_2^{i_2}}\frac{\tilde g(\ell x_2+i_2,2y_2+j_2)}{\tilde g(\ell x_1+i_1,2y_1+j_1)}\frac{N^{\frac{i_2}{2}}}{N^{\frac{i_1}{2}}}\left(1+\Ordo\left(\frac{|Z_1|+|Z_2|}{N^{\frac{1}{2}}}\right)\right),
\end{multline}
where, assuming $|Z_k|\leq N^{\frac{1}{2}}$, the implicit constant in the remainder term is independent of $Z_k, x_k, i_k, y_k, j_k$, $k=1,2$.
\end{lemma}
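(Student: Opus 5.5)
The plan is to exploit the rank-one structure of all the matrices involved at $z=1$ and to expand each factor of $\boldsymbol{\mathcal{M}}_{i_1,j_1,i_2,j_2}$ separately around $q_\infty$. Near $q_\infty$ the point $z=1$ is not a branch point, so $z$ is a local coordinate and $w=w_1(z)$ is the eigenvalue of $\Phi(z)$ with a pole of order $\ell$ at $z=1$ (cf. the proof of Lemma~\ref{lem:at_1}). Write $P_i(z)=\prod_{m=1}^{2i}\phi_m(z)$. Every entry of $(z-1)^{i}P_i(z)$, of $(z-1)^{\ell}w$ and of $Q(z,w)$ is analytic in $z$ near $1$. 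A first-order Taylor expansion therefore gives relative errors $\Ordo(|z-1|)=\Ordo(|Z|N^{-1/2})$, uniformly in the finitely many indices $i_k,j_k$, as soon as the leading coefficients are nonzero. This condition is guaranteed because all weights are positive.

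\emph{Step 1: the right factor.} By \eqref{eq:at_1_1} and \eqref{eq:at_1_3} with $i'=0$,
\[
Q(z_2,w_2)P_{i_2}(z_2)=(z_2-1)^{-i_2}\bigl(Q(q_\infty)\,c_{i_2}\,u_1v_{i_2}^T+\Ordo(|z_2-1|)\bigr),
\]
where $u_1=(1,\alpha_1)^T$, $v_i=(1,\beta_i^{-1})^T$, and $c_{i}=\prod_{m=1}^{i}(1+\alpha_m^{-1}\beta_m)\prod_{m=1}^{i-1}(1+\alpha_{m+1}\beta_m^{-1})$.

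\emph{Step 2: the left factor, avoiding the inverse.} Inverting $P_{i_1}$ directly would naively produce a factor $(z_1-1)^{-i_1}$, which has the wrong sign of the exponent. Instead I use that $Q(z,w)$ is the spectral projection of $\Phi(z)$ onto its $w$-eigenspace, so $\Phi Q=wQ$. Since $\Phi=P_{i}\prod_{m=2i+1}^{2\ell}\phi_m$, this yields
\[
P_{i_1}(z_1)^{-1}Q(z_1,w_1)=w_1^{-1}\Bigl(\prod_{m=2i_1+1}^{2\ell}\phi_m(z_1)\Bigr)Q(z_1,w_1).
\]
Now \eqref{eq:at_1_2} gives $w_1^{-1}=(z_1-1)^{\ell}/B\,(1+\Ordo(|z_1-1|))$, and \eqref{eq:at_1_3} with $i'=i_1$, $i=\ell$ gives the product as $(z_1-1)^{-(\ell-i_1)}$ times an explicit rank-one matrix $u_{i_1+1}v_\ell^T$, with $u_i=(1,\alpha_i)^T$, plus a relative error. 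Hence this factor is $(z_1-1)^{i_1}$ times an explicit rank-one matrix times $Q(q_\infty)$, with relative error $\Ordo(|z_1-1|)$.

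\emph{Step 3: assembling.} The middle product is $Q(z_1,w_1)Q(z_2,w_2)=Q(q_\infty)^2+\Ordo(|z_1-1|+|z_2-1|)$. Since $Q(q_\infty)=u_1v_\ell^T/(v_\ell^Tu_1)$ is a projection, the leading term collapses to a scalar times $u_{i_1+1}v_{i_2}^T$. Taking the $(j_1+1,j_2+1)$ entry produces $\alpha_{i_1+1}^{j_1}\beta_{i_2}^{-j_2}$ times products of the constants $c_i$, $B$, and $1+\alpha_1\beta_\ell^{-1}$. Substituting $z_k-1=Z_kN^{-1/2}$ gives the factor $Z_1^{i_1}Z_2^{-i_2}N^{(i_2-i_1)/2}$.

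It remains to verify that the scalar equals $\nu(\ell-i_2,j_2)\tilde g(\cdot_2)/\tilde g(\cdot_1)$. The factor $\alpha_{i_1+1}^{j_1}$ matches $\tilde g(\cdot_1)^{-1}$, and $\beta_{i_2}^{-j_2}=\alpha_{i_2+1}^{-j_2}\,(\alpha_{i_2+1}\beta_{i_2}^{-1})^{j_2}$. In the second identity, $\alpha_{i_2+1}^{-j_2}$ is the $j$-dependent part of $\tilde g(\cdot_2)$, and $(\alpha_{i_2+1}\beta_{i_2}^{-1})^{j_2}/(1+\alpha_{i_2+1}\beta_{i_2}^{-1})$ is exactly $\nu(\ell-i_2,j_2)$. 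The leftover normalisation $1+\alpha_{i_2+1}\beta_{i_2}^{-1}$ must combine with $c_{i_2}$, $c_{i_1}$-type products and $B$ to give the $i$-dependent part of $\tilde g$. Periodicity $\alpha_{\ell+1}=\alpha_1$ and the definition of $B$ should make the telescoping work.

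I expect this constant bookkeeping in Step 3 to be the main obstacle, together with the case $i_k=0$, where the empty products must be handled consistently. The uniformity of the error term is routine given analyticity, the restriction $|Z_k|\le N^{1/2}$ that keeps $z_k$ in a fixed neighbourhood of $1$, and the finiteness of the index set.
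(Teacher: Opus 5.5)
Your proposal is correct and follows the paper's proof step by step. The paper also uses $\Phi Q=wQ$ to rewrite $\bigl(\prod_{m=1}^{2i_1}\phi_m\bigr)^{-1}Q$ as $w^{-1}\prod_{m=2i_1+1}^{2\ell}\phi_m\,Q$, evaluates the rank-one leading terms at $q_\infty$ via Lemma~\ref{lem:at_1}, multiplies them, and concludes by a Taylor expansion. The constant bookkeeping you left open does close: $c_{i_2}(1+\alpha_{i_2+1}\beta_{i_2}^{-1})=\prod_{m=1}^{i_2}(1+\alpha_m^{-1}\beta_m)(1+\alpha_{m+1}\beta_m^{-1})$, and $B^{-1}(1+\alpha_1\beta_\ell^{-1})\prod_{m=i_1+1}^{\ell}(1+\alpha_m^{-1}\beta_m)\prod_{m=i_1+1}^{\ell-1}(1+\alpha_{m+1}\beta_m^{-1})=\bigl(\prod_{m=1}^{i_1}(1+\alpha_m^{-1}\beta_m)(1+\alpha_{m+1}\beta_m^{-1})\bigr)^{-1}$. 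The case $i_2=0$ fits the same formula if you set $c_0=(1+\alpha_1\beta_\ell^{-1})^{-1}$.
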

The gauge function~$\tilde g$ in the previous lemma is part of the gauge function~$g$ in~\eqref{eq:gauge_function}.
\begin{proof}
To prove the lemma, we prove that
\begin{multline}\label{eq:at_1_full}
\left.\frac{(z_2-1)^{i_2}}{(z_1-1)^{i_1}}\left(\prod_{m=1}^{2i_1}\phi_m(z_1)\right)^{-1}Q(z_1,w_1)Q(z_2,w_2)\prod_{m=1}^{2i_2}\phi_m(z_2)\right|_{(z_k,w_k)=q_\infty, k=1,2} \\
=\frac{\prod_{m=1}^{i_2}(1+\alpha_m^{-1}\beta_m)(1+\alpha_{m+1}\beta_m^{-1})}
{\prod_{m=1}^{i_1}(1+\alpha_m^{-1}\beta_m)(1+\alpha_{m+1}\beta_m^{-1})}
\begin{psmallmatrix}
1 & 0 \\
0 & \alpha_{i_1+1}
\end{psmallmatrix}
\frac{
\begin{psmallmatrix}
1 & \alpha_{i_2+1}\beta_{i_2}^{-1} \\
1 & \alpha_{i_2+1}\beta_{i_2}^{-1}
\end{psmallmatrix}}
{1+\alpha_{i_2+1}\beta_{i_2}^{-1}}
\begin{psmallmatrix}
1 & 0 \\
0 & \alpha_{i_2+1}^{-1}
\end{psmallmatrix}.
\end{multline}
The~$(j_1+1)(j_2+1)$-entry of the right-hand side is equal to the leading term on the right-hand side of the equality in the statement. Since the expression on the left-hand side of the equality in the statement is rational in~$z_1$ and~$z_2$, the result of the lemma follows from a Taylor expansion.

We begin with the factor depending on~$(z_2,w_2)$. It follows from Lemma~\ref{lem:at_1} that
\begin{equation}\label{eq:at_1_factor_1}
\left.(z_2-1)^{i_2}Q(z_2,w_2)\prod_{m=1}^{2i_2}\phi_m(z_2)\right|_{(z_2,w_2)=q_\infty}=
\prod_{m=1}^{i_2}(1+\alpha_m^{-1}\beta_m)\prod_{m=1}^{i_2-1}(1+\alpha_{m+1}\beta_m^{-1})
\begin{pmatrix}
1 \\
\alpha_{1}
\end{pmatrix}
\begin{pmatrix}
1 & \beta_{i_2}^{-1}
\end{pmatrix}.
\end{equation}

We continue with the factor depending on~$(z_1,w_2)$. By definition~\eqref{eq:adjuagate_matrix} of~$Q$, it follows that for~$(z,w)\in \mathcal R$,~$\Phi(z)Q(z,w)=wQ(z,w)$. This implies that
\begin{equation}
(z_1-1)^{-i_1}\left(\prod_{m=1}^{2i_1}\phi_m(z_1)\right)^{-1}Q(z_1,w_1)
=\frac{(z_1-1)^{\ell-i_1}}{(z_1-1)^\ell w_1}\prod_{m=2i_1+1}^{2\ell}\phi_m(z_1)Q(z_1,w_1).
\end{equation}
We evaluate the previous equality at~$(z_1,w_1)=q_\infty$. Using Lemma~\ref{lem:at_1}, we get that the right-hand side is equal to
\begin{multline}\label{eq:at_1_factor_2}
\frac{\prod_{m=i_1+1}^\ell(1+\alpha_m^{-1}\beta_m)\prod_{m=i_1+1}^{\ell-1}(1+\alpha_{m+1}\beta_m^{-1})}
{\prod_{m=1}^\ell(1+\alpha_m^{-1}\beta_m)(1+\alpha_{m+1}\beta_m^{-1})}
\begin{pmatrix}
1 \\
\alpha_{i_1+1}
\end{pmatrix}
\begin{pmatrix}
1 & \beta_\ell^{-1}
\end{pmatrix} \\
=
\frac{1}{\prod_{m=1}^{i_1}(1+\alpha_m^{-1}\beta_m)(1+\alpha_{m+1}\beta_m^{-1})}\frac{1}{1+\alpha_1\beta_\ell^{-1}}
\begin{pmatrix}
1 \\
\alpha_{i_1+1}
\end{pmatrix}
\begin{pmatrix}
1 & \beta_\ell^{-1}
\end{pmatrix}.
\end{multline}

Multiplying~\eqref{eq:at_1_factor_1} and~\eqref{eq:at_1_factor_2} together with the equality 
\begin{equation}
\begin{pmatrix}
1 \\
\alpha_{i_1+1}
\end{pmatrix}
\begin{pmatrix}
1 & \beta_{i_2}^{-1}
\end{pmatrix}
=
\begin{pmatrix}
1 & 0 \\
0 & \alpha_{i_1+1}
\end{pmatrix}
\begin{pmatrix}
1 & \alpha_{i_2+1}\beta_{i_2}^{-1} \\
1 & \alpha_{i_2+1}\beta_{i_2}^{-1}
\end{pmatrix}
\begin{pmatrix}
1 & 0 \\
0 & \alpha_{i_2+1}^{-1}
\end{pmatrix}
\end{equation}
yield~\eqref{eq:at_1_full}.
\end{proof}

We also need to provide uniform bounds on the same quantity that we investigated locally around~$q_\infty$ in the previous lemma. For that we have the following lemma.
\begin{lemma} \label{lem:global_matrix_meromorphicity}
The 1-forms
\begin{equation}
\mathcal R\ni (z_k,w_k)\mapsto \boldsymbol{\mathcal{M}}_{i_1,j_1,i_2,j_2}\left(z_1,w_1;z_2,w_2\right) \frac{\d z_1 \d z_2}{z_2(z_2-z_1)}, 
\end{equation}
for $k=1,2$ are meromorphic with possible poles only at $p_\infty$, $p_0$, $q_\infty$, $q_0$, and $(z_1,w_1)=(z_2,w_2)$. 
\end{lemma}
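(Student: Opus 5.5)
The plan is to treat the two factors of the integrand separately and track where each can fail to be holomorphic on $\mathcal R$. The product $\left(\prod_{m=1}^{2i_1}\phi_m(z_1)\right)^{-1}Q(z_1,w_1)$ depends only on $(z_1,w_1)$, and $Q(z_2,w_2)\prod_{m=1}^{2i_2}\phi_m(z_2)$ depends only on $(z_2,w_2)$. So it suffices to show two things: each of these matrix-valued functions is meromorphic on $\mathcal R$ with poles only in $\{p_0,p_\infty,q_0,q_\infty\}$, and the scalar form $\frac{\d z_1\d z_2}{z_2(z_2-z_1)}$ has poles only at these points and on the diagonal. The only other possible singularities are those of the entries of $\phi_m$, the zeros of $\partial_w\det(wI-\Phi(z))$, and the zeros of $\d z$ (branch points), and we check each in turn.

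First, the transition matrices. By \eqref{eq:transition_matrices}, the entries of $\phi_m(z)$ are rational in $z$ with poles only at $z=0$, $z=1$, and, for $\phi_{2i}$, at $z=\infty$ after rescaling. Since $\det\phi_{2i-1}=1-z^{-1}$ and $\det\phi_{2i}=(1-z^{-1})^{-1}$, the inverses $\phi_m^{-1}$ are also rational with poles only over $z\in\{0,1,\infty\}$. On $\mathcal R$ the fibres over these values are exactly $p_0$, $\{q_0,q_\infty\}$ and $p_\infty$, because $z_0=0$ is a branch point and $z=\infty$ is the end of the last cut. Hence these factors are meromorphic with poles only at the four marked points.

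Second, the function $Q$. Its numerator $\adj(wI-\Phi(z))$ is polynomial in $w$ and rational in $z$, with $z$-poles only at $0,1,\infty$. The denominator is $\partial_w\det(wI-\Phi(z))=2w-\Tr\Phi(z)=w-w^{-1}$, using $\det\Phi=1$. This vanishes exactly where $w=\pm1$, i.e.\ where the two eigenvalues coincide, which happens precisely at the branch points $z_k$ by Lemma~\ref{lem:zeros_disc}. The key observation is that $Q\,\d z$, rather than $Q$ alone, is the relevant object. At a branch point with local parameter $s$, $z-z_k\sim s^2$, so $\d z\sim 2s\,\d s$ has a simple zero, while $w-w^{-1}\sim c\,s$ has a simple zero. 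Hence $Q\,\d z$ stays holomorphic there. This is the standard fact that $\frac{\d z}{\partial_wP}$ is holomorphic on a smooth curve, and the genus assumption $g=\ell-1$ ensures the branch points are simple. I would make this local computation explicit; the one subtle point is the branch point $z_0=0=p_0$, which is in any case allowed as a pole. Away from branch points, $z$ is a local coordinate, so $\d z_k$ introduces no poles except over $z=\infty$, i.e.\ at $p_\infty$.

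Finally, $\frac{1}{z_2(z_2-z_1)}$ is singular only at $z_2=0$ (giving $p_0$), at $z_2=\infty$ (giving $p_\infty$), and where $z_1=z_2$. The last case is the main obstacle. The condition $z_1=z_2$ includes points on different sheets, $(z,w)$ and $(z,w^{-1})$, which are not on the diagonal of $\mathcal R\times\mathcal R$, and the claim lists only $(z_1,w_1)=(z_2,w_2)$. To handle this I would use the rank-one projector structure: $Q(z,w)$ is the spectral projector of $\Phi(z)$ onto the eigenvalue $w$, and projectors for distinct eigenvalues annihilate each other, so $Q(z,w)Q(z,w^{-1})=0$. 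After cancelling the commuting product of $\phi_m$ factors, the numerator $\boldsymbol{\mathcal M}$ therefore vanishes when $z_1=z_2$ with $w_1\ne w_2$, which cancels the simple zero of $z_2-z_1$ there. This leaves singularities only on the true diagonal and at the four marked points.
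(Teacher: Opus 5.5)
Your proposal is correct and follows essentially the same route as the paper: poles of the $\phi_m$-factors and of $\adj(wI-\Phi)$ lie only over $z\in\{0,1,\infty\}$, the form $\d z/\partial_w\det(wI-\Phi(z))$ is holomorphic at the (simple) branch points, and the off-diagonal singularity at $z_1=z_2$, $w_1\neq w_2$ is removed via $Q(z,w_1)Q(z,w_2)=0$. The paper derives that vanishing from $w_1QQ=Q\Phi Q=w_2QQ$, which is your spectral-projector argument. One small slip: no ``cancelling the commuting product of $\phi_m$ factors'' is needed (and those factors need not commute), since $Q(z,w_1)Q(z,w_2)$ sits in the middle of $\boldsymbol{\mathcal M}$ and its vanishing kills the whole product regardless of the outer factors.
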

\begin{proof}
As a function on $\mathcal R$, the matrix $\adj(w_kI-\Phi(z_k))$ can only have poles at $p_\infty$, $p_0$, $q_\infty$ and $q_0$. Since $\mathcal R$ is non-singular, the 1-form $\d z_k/\partial_{w_k}\det(w_kI-\Phi(z_k))$ is holomorphic. Indeed, both $\partial_{w_k}\det(w_kI-\Phi(z_k))$ and $\d z_k$ have simple zeros at the branch points, so they cancel each other, and $\partial_{w_k}\det(w_kI-\Phi(z_k))$ has no other zeros. Or differently put, the identity
\begin{equation}
\frac{\d z_k}{\partial_{w_k}\det (w_kI-\Phi(z_k))}=-\frac{\d w_k}{\partial_{z_k}\det (w_kI-\Phi(z_k))},
\end{equation}
which is derived by differentiating $\det(w_kI-\Phi(z_k))=0$, implies that the 1-form is analytic also at the branch points. 

Since the 1-forms from the statement are simply the product of $\adj(w_kI-\Phi(z_k))$, $\d z_k/\partial_{w_k}(w_kI-\Phi(z_k))$, $\prod_{m=1}^{2i}\phi_m(z_k)$ or $\left(\prod_{m=1}^{2i'}\phi(z_k)\right)^{-1}$, and $\frac{1}{z_2(z_2-z_1)}$, what remains to show is that the forms have no pole at $z_1=z_2$ if $w_1\neq w_2$. 

If $(z,w)\in \mathcal R$, then, by definition of $Q$ \eqref{eq:adjuagate_matrix}, 
\begin{equation}
wQ(z,w)=\Phi(z)Q(z,w)=Q(z,w)\Phi(z).
\end{equation}
So, if $(z,w_1), (z,w_2)\in \mathcal R$, 
\begin{equation}
w_1Q(z,w_1)Q(z,w_2)=Q(z,w_1)\Phi(z)Q(z,w_2)=w_2Q(z,w_1)Q(z,w_2).
\end{equation}
In particular, if $w_1\neq w_2$, then
\begin{equation}
Q(z,w_1)Q(z,w_2)=0,
\end{equation}
and, hence,
\begin{equation}
\mathcal M_{i_1,j_1,i_2,j_2}(z,w_1;z,w_2)=0.
\end{equation} 
This shows that the 1-forms have removable singularities at $z_1=z_2$ if $w_1\neq w_2$.
\end{proof}
\begin{remark}
The previous lemma tells us that we do not need to consider a residue at $z_1=z_2$ if the corresponding points on $\mathcal R$ lies on different sheets as we deform the contours in the double contour integral \eqref{eqn:Jd}. 
\end{remark}

We have now dealt with the matrix part of the integrand in \eqref{eqn:Jd}.
We proceed to investigate the scalar part of the integrand that also depends on $N$. We prove the following local estimate.

\begin{lemma}\label{lemma:local_action_function}
Let~$z_k=1+\frac{Z_k}{N^\frac{1}{2}}$,~$k=1,2$. Then, as~$N\to \infty$, for $(z_1,w_1)$,  $(z_2,w_2)$ in the neighbourhood of $q_{\infty}$, we have that
\begin{multline} \label{eqn:integrand_near_q_infinity}
\exp\left({N(G(z_2,w_2)-G(z_1,w_1))}\right)
\frac{w_1^{r_1}}{w_2^{r_2}}
 \frac{z_1^{\mu_1\sqrt{N}}}{z_2^{\mu_2\sqrt{N}}}
 \frac{1}{z_2(z_2-z_1)}
\\= N^{\frac{1}{2}} \frac{\left(1+\mathcal{O}\left( \frac{|Z_1|^3+|Z_2|^3}{N^{\frac{1}{2}}} \right)\right)}{Z_2-Z_1} 
 N^{\frac{\ell r_1}{2}-\frac{\ell r_2}{2}} 
B^{ r_1- r_2} 
\frac{Z_2^{\ell r_2}}{Z_1^{\ell r_1}}
\exp\left(\frac{\sigma^2}{2}(Z_2^2-Z_1^2)+Z_1\mu_1-Z_2\mu_2\right),
\end{multline}
where $B=\left(\prod_{m=1}^\ell(1+\alpha_m^{-1}\beta_m)(1+\alpha_{m+1}\beta_m^{-1})\right)$ and the implicit constant in the remainder term is uniform for compact subsets of $\Lambda_{\{0,1\}}^2$. 
 For $|Z_1|, |Z_2| \leq N^{\frac{1}{12}}$, the remainder term
 is uniformly $\mathcal{O}(N^{-\frac{1}{4}})$, as $N\rightarrow\infty$.
\end{lemma}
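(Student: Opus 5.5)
The plan is to expand each factor of the integrand separately in the local chart $z\mapsto z$ near $q_\infty$, with $z_k=1+Z_kN^{-1/2}$, and then collect the leading terms. Throughout I use that near $q_\infty$ the function $h(z):=(z-1)^\ell w(z)$ is analytic and nonvanishing. By \eqref{eq:at_1_2} in Lemma~\ref{lem:at_1} it satisfies $h(1)=B$, and hence $w(z)=B(z-1)^{-\ell}(1+\Ordo(z-1))$. By Corollary~\ref{lem:action_taylor}, with $G(z)=\ell\log(z-1)+\log w-\tau\log z=\log h(z)-\tau\log z$ analytic near $z=1$ (as a branch), we have the expansion
\begin{equation}
G(z)=G(1)+\tfrac{\sigma^2}{2}(z-1)^2+\Ordo((z-1)^3).
\end{equation}

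The steps are as follows.

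First, the exponential factor. We have $N(G(z_2)-G(z_1))=\tfrac{\sigma^2}{2}(Z_2^2-Z_1^2)+\Ordo((|Z_1|^3+|Z_2|^3)N^{-1/2})$, since the constant $G(1)$ cancels between the two points.

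Second, the factors $w_1^{r_1}/w_2^{r_2}$. Here $w_k=BN^{\ell/2}Z_k^{-\ell}(1+\Ordo(Z_kN^{-1/2}))$. Since $r_k$ ranges over a compact set (indeed $t_k=\ell r_k-i_k$ is bounded), this gives the factor
\begin{equation}
N^{\ell(r_1-r_2)/2}B^{r_1-r_2}Z_2^{\ell r_2}Z_1^{-\ell r_1}\bigl(1+\Ordo((|Z_1|+|Z_2|)N^{-1/2})\bigr).
\end{equation}

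Third, the factors $z_k^{\pm\mu_k\sqrt N}$. Write $\sqrt N\log(1+Z_kN^{-1/2})=Z_k-Z_k^2/(2\sqrt N)+\Ordo(|Z_k|^3/N)$. Multiplying by $\mu_k$, which is bounded, the first factor yields $\exp(Z_1\mu_1-Z_2\mu_2)$. The correction is $\Ordo(|Z_k|^2N^{-1/2})$, which I absorb into the stated error: it is at most $\Ordo((1+|Z_k|^3)N^{-1/2})$ and is dominated once $|Z_k|\ge1$; for small $Z_k$ it is already $\Ordo(N^{-1/2})$. The floor in $y_p$ contributes a factor $z^{\Ordo(1)}=1+\Ordo(|Z|N^{-1/2})$ and is handled the same way.

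Fourth, the remaining rational factor. We have $1/(z_2(z_2-z_1))=N^{1/2}(Z_2-Z_1)^{-1}(1+\Ordo(|Z_2|N^{-1/2}))$.

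Multiplying these four expansions gives \eqref{eqn:integrand_near_q_infinity}. The product of the $(1+\Ordo(\cdot))$ factors is again of this form because each error is bounded in the relevant range. The uniformity on compact subsets of $\Lambda_{\{0,1\}}^2$ comes from $r_k$ and $\mu_k$ being bounded there. The only dependence on $i_k,j_k$ is through the matrix part, which Lemma~\ref{lem:matrix_local} already handles.

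The final claim is then immediate. If $|Z_k|\le N^{1/12}$, every error term is at most a constant times $N^{3/12}N^{-1/2}=N^{-1/4}$, and $e^{\Ordo(N^{-1/4})}=1+\Ordo(N^{-1/4})$.

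The main point needing care is the second step. The logarithm of $w$ itself is singular at $q_\infty$, so the expansion must go through the analytic function $h=(z-1)^\ell w$ rather than through $\log w$. The terms $\ell r_k\log(z_k-1)=\ell r_k(\log Z_k-\tfrac12\log N)$ must be kept exactly, which produces the powers $Z_k^{\ell r_k}$ and $N^{\ell r_k/2}$. Only $\log h$ is Taylor expanded.

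A related subtlety is making sure the branches of $G$ at $z_1$ and $z_2$ are chosen consistently, so that $G(1)$ genuinely cancels. This holds because $\exp(NG)$ is single-valued and only $h$ and $z$ enter analytically near $z=1$.
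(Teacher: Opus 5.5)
Your proposal is correct and follows essentially the same route as the paper. The paper also expands $\exp(NG)$ via Corollary~\ref{lem:action_taylor}, $w_k^{r_k}$ via the analytic function $(z-1)^\ell w$ (equal to $B$ at $q_\infty$ by Lemma~\ref{lem:at_1}) and $z_k^{\mu_k\sqrt N}$ via the logarithm, which is Lemma~\ref{lemma:expansions}, and then multiplies these expansions. As in the paper's proof, what you actually obtain is a remainder $\mathcal{O}\bigl(N^{-1/2}\sum_k(|Z_k|+|Z_k|^2+|Z_k|^3)\bigr)$, and this matches the stated $|Z|^3$ form only when the $|Z_k|$ stay away from $0$, as they do on the contours where the lemma is applied; so your remark that the small-$Z_k$ correction is ``already $\mathcal{O}(N^{-1/2})$'' does not literally give the stated bound there, though the $\mathcal{O}(N^{-1/4})$ claim for $|Z_k|\le N^{1/12}$ holds as you say.
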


Since the local estimates necessary to prove this statement are useful in the asymptotic analysis we include them in a separate lemma. 
\begin{lemma}\label{lemma:expansions}
Let~$z_k=1+\frac{Z_k}{N^\frac{1}{2}}$,~$k=1,2$, with $|Z_k|< N^{\frac{1}{2}}$ (on the canonical local chart in this text, at a neighbourhood of $q_{\infty}$). Then, as~$N\to \infty$, we have that
\begin{equation} \label{eqn:local_action_contribution}
\exp(NG(z_k,w_k))= \exp\left(NG(1)+\frac{\sigma^2}{2}Z_k^2+\mathcal{O}\left(\frac{|Z_k|^3}{N^{\frac{1}{2}}}\right)\right),
\end{equation}
\begin{equation} \label{eqn:local_w_contribution}
w_k^{r_k}=
Z_k^{-\ell r_k} 
N^{\frac{\ell r_k}{2}} 
\left(B+\mathcal{O}\left(\frac{|Z_k|}{N^{\frac{1}{2}}}\right)\right)^{r_k}
=
Z_k^{-\ell r_k} 
N^{\frac{\ell r_k}{2}} 
B^{r_k}\left(1+\mathcal{O}\left(\frac{r_k |Z_k|}{N^{\frac{1}{2}}}\right)\right),
\end{equation}
and
\begin{equation} \label{eqn:local_z_contribution}
z_i^{\sqrt{N}\mu_k}=\left(1+\frac{Z_k}{ \sqrt{N}}\right)^{\mu_k\sqrt{N}}=\exp\left(\mu_k Z_k+\mathcal{O}\left(\frac{\mu_k |Z_k|^2}{N^{\frac{1}{2}}}\right)\right),
\end{equation}
where $B$ is as in Lemma \ref{lemma:local_action_function}.
\end{lemma}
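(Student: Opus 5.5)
The plan is to treat the three expansions separately, each by a Taylor expansion in the local chart $z\mapsto (z,w(z))$ near $q_\infty$. This chart is valid because $q_\infty=(1,\infty)$ is not a branch point: $z=1$ is not a zero of $p$ by Lemma~\ref{lem:zeros_disc}, since the zeros of $p$ are $0$ and negative reals. Hence $w$ is a meromorphic function of $z$ near $z=1$ on the sheet containing $q_\infty$.

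For \eqref{eqn:local_action_contribution}, I use Corollary~\ref{lem:action_taylor}, which gives $G'(1)=0$ and $G''(1)=\sigma^2$. Note that $G(z)=\ell\log(z-1)+\log w-\tau\log z=\log\big((z-1)^\ell w\big)-\tau\log z$. By \eqref{eq:at_1_2}, $(z-1)^\ell w$ is analytic and nonvanishing at $z=1$, so $G$ (up to the choice of branch, which is irrelevant after exponentiation) is analytic in a fixed disc $|z-1|<\delta$. Taylor's theorem with a uniform bound on $G'''$ in that disc then gives
\begin{equation}
G(z)=G(1)+\tfrac{\sigma^2}{2}(z-1)^2+\Ordo(|z-1|^3).
\end{equation}
Substituting $z-1=Z_kN^{-1/2}$ and multiplying by $N$ yields the claim.

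For \eqref{eqn:local_w_contribution}, I write $w_k=(z_k-1)^{-\ell}h(z_k)$ with $h(z)=(z-1)^\ell w(z)$. The function $h$ is analytic near $1$ and, by \eqref{eq:at_1_2}, $h(1)=B>0$. Hence
\begin{equation}
h(z_k)=B+\Ordo(|Z_k|N^{-1/2}),
\end{equation}
and $(z_k-1)^{-\ell r_k}=Z_k^{-\ell r_k}N^{\ell r_k/2}$, which gives the first equality. For the second equality I factor out $B$ and use $(1+\epsilon)^{r}=1+\Ordo(r\epsilon)$, valid when $r\epsilon$ is bounded. In the intended application $r_k$ is bounded on compact sets, since $t_k$ bounded forces $r_k$ bounded. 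I state the bound under that proviso, taking $r_k|Z_k|N^{-1/2}\le C$.

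For \eqref{eqn:local_z_contribution}, I write
\begin{equation}
z_k^{\sqrt N\mu_k}=\exp\big(\mu_k\sqrt N\log(1+Z_kN^{-1/2})\big)
\end{equation}
and use $\log(1+u)=u+\Ordo(|u|^2)$ for $|u|\le 1/2$. The hypothesis $|Z_k|<N^{1/2}$ should really be read as $|Z_k|\le cN^{1/2}$ with $c$ small enough that the point lies in the chart disc.

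The main (mild) obstacle is uniformity. I need to fix a neighbourhood of $q_\infty$ on which the chart, the analyticity of $G$ and of $h$, and the Taylor remainders all hold with uniform constants, and to check that $\mu_k$ and $r_k$ range over bounded sets. Once these are in place, everything else is routine bookkeeping.
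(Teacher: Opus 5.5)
Your proposal is correct and follows the paper's proof. The first expansion is a Taylor expansion of $G$ using $G'(1)=0$ and $G''(1)=\sigma^2$ from Corollary~\ref{lem:action_taylor}; the second uses analyticity of $(z-1)^\ell w$ at $q_\infty$ with value $B$ from \eqref{eq:at_1_2}; the third comes from expanding the logarithm. Your added provisos (taking $|Z_k|\le cN^{1/2}$ with $c<1$ so the Taylor remainders are uniform, and requiring $r_k|Z_k|N^{-1/2}$ bounded for the second equality in \eqref{eqn:local_w_contribution}) are legitimate refinements the paper leaves implicit, and they hold in the ranges where the lemma is later used.
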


\begin{proof}[Proof of Lemma \ref{lemma:expansions}]
The first identity \eqref{eqn:local_action_contribution} follows from Lemma \ref{lem:action_taylor}.
Since $(z-1)^{\ell}w$ is analytic at $q_{\infty}$ as a function of $z$ and due to Lemma \ref{lem:at_1}, $(z-1)^{\ell} w=B+\mathcal{O}(z-1)$ it follows that 
\begin{equation}
w_k^{r_k}=
Z_k^{-\ell r_k} 
N^{\frac{\ell r_k}{2}} 
\left(B+\mathcal{O}\left(\frac{|Z_k|}{N^{\frac{1}{2}}}\right)\right)^{r_k}
=
Z_i^{-\ell r_k} 
N^{\frac{\ell r_k}{2}} 
B^{r_k}\left(1+\mathcal{O}\left(\frac{r_k|Z_k|}{N^{\frac{1}{2}}}\right)\right).
\end{equation}
and \eqref{eqn:local_w_contribution} follows. Finally \eqref{eqn:local_z_contribution} is a consequence of Taylor expanding the logarithm.
\end{proof}

Lemma \ref{lemma:local_action_function} now follows immediately.
\begin{proof}[Proof of Lemma \ref{lemma:local_action_function}]

Plugging in \eqref{eqn:local_action_contribution}, \eqref{eqn:local_w_contribution}, \eqref{eqn:local_z_contribution} in the left-hand side of \eqref{eqn:integrand_near_q_infinity} given the prescribed change of variables yields that
\begin{multline} 
\exp\left({N(G(z_2,w_2)-G(z_1,w_1))}\right)
\frac{w_1^{r_1}}{w_2^{r_2}}
 \frac{z_1^{\mu_1\sqrt{N}}}{z_2^{\mu_2\sqrt{N}}}
 \frac{1}{z_2(z_2-z_1)}
\\
= N^{\frac{1}{2}} \frac{(1+\mathcal{O}\left( N^{-\frac{1}{2}}
\sum_{k=1}^{2} (\mu_k|Z_k|^2 +r_k |Z_k|+|Z_k|^3) \right)}{Z_2-Z_1} 
N^{\frac{\ell r_1}{2}-\frac{\ell r_2}{2}} 
B^{ r_1- r_2} 
\\ \times 
\frac{Z_2^{\ell r_2}}{Z_1^{\ell r_1}}
\exp\left(\frac{\sigma^2}{2}(Z_2^2-Z_1^2)+Z_1\mu_1-Z_2\mu_2\right),\quad \text{as}\quad N\rightarrow\infty.
\end{multline} 
We deduce the result. 
\end{proof}
We also need the following lemma in a neighbourhood of $q_{0}$. 
\begin{lemma}\label{lemma:G_at_q0}
The function 
$\exp(G(z,w))= (z-1)^{\ell} w z^{-\tau }$ has a zero of order $2\ell$ at $q_0$. Hence in a neighbourhood of $q_0$ setting $z=1+\frac{Z}{\sqrt{N}}$, with respect to the (canonical) local chart coming from $(z,w)\mapsto z$ we have that
\begin{equation} \label{eqn:afnearq0}
w_k^{-r_k}\exp(NG(z_k,w_k))=\left(B^{-1}+\mathcal{O}\left(\frac{|Z_k|}{N^{\frac{1}{2}}}\right)\right)^{N-r_2}\left(\frac{Z_k}{N^{\frac{1}{2}}}\right)^{2\ell N-r_k\ell},\quad\text{for}\quad k=1,2,
\end{equation}
where $B>0$ is as in Lemma \ref{lemma:local_action_function}.
\end{lemma}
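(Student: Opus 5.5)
The plan is to work in the canonical local chart $(z,w)\mapsto z$ near $q_0=(1,0)$. I first check that $q_0$ is not a branch point. By Lemma~\ref{lem:zeros_disc} all zeros of $p$ lie in $(-\infty,0]$, so $z=1$ is not a branch point. Hence on the sheet containing $q_0$ the eigenvalue $w=w_2(z)$ is a single-valued meromorphic function of $z$ near $z=1$, and $z$ itself is a local coordinate there.

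\emph{Order of the zero.} The key input is \eqref{eq:at_1_2_q0} of Lemma~\ref{lem:at_1}. It shows that $h(z):=(z-1)^{-\ell}w_2(z)$ tends to $B^{-1}$ as $z\to 1$. Since $w_2$ is meromorphic near $z=1$, $h$ extends analytically to $z=1$ with $h(1)=B^{-1}\neq 0$. In the proof of Lemma~\ref{lem:at_1} one has $w_2=w_1^{-1}$ with $(z-1)^\ell w_1$ analytic and nonzero at $z=1$, so this analyticity is immediate. Then
\begin{equation}
\exp(G(z,w))=(z-1)^{\ell}w\,z^{-\tau}=(z-1)^{2\ell}\,h(z)\,z^{-\tau}.
\end{equation}
The factor $z^{-\tau}$ is analytic and nonzero near $z=1$. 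Hence $\exp(G)$ has a zero of order exactly $2\ell$ at $q_0$.

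\emph{The expansion.} Substitute $z=1+Z/\sqrt N$ and write $w^{-r}=h(z)^{-r}(z-1)^{-\ell r}$. This gives
\begin{equation}
w^{-r}\exp(NG(z,w))=h(z)^{N-r}\,z^{-\tau N}\left(\frac{Z}{N^{\frac12}}\right)^{2\ell N-\ell r}.
\end{equation}
The factor $h(z)^{N-r}$ equals $(B^{-1}+\mathcal O(|Z|/N^{1/2}))^{N-r}$ by Taylor expansion of $h$ at $1$. The only point requiring care is $z^{-\tau N}$, which is \emph{not} $1+o(1)$ when $|Z|$ is large. I absorb it into the power by writing $z^{-\tau N}=\bigl(z^{-\tau N/(N-r)}\bigr)^{N-r}$. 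For $r\le 2N-1$ bounded on the compact sets under consideration (indeed $r$ is fixed there, since $t$ ranges over a compact set), $N/(N-r)$ is bounded. Then $z^{-\tau N/(N-r)}=1+\mathcal O(|Z|/N^{1/2})$ uniformly for $|Z|<N^{1/2}$ in the chart neighbourhood. Multiplying by $h(z)$ keeps the form $B^{-1}+\mathcal O(|Z|/N^{1/2})$, which yields \eqref{eqn:afnearq0}. Here the exponent in the statement should be read as $N-r_k$.

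I expect no serious obstacle. The only subtlety is making the constants in the $\mathcal O$-term uniform, specifically the absorption of $z^{-\tau N}$ just described. This uses only that $N/(N-r_k)$ stays bounded for the relevant range of $r_k$.
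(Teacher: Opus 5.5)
Your proof is correct and follows the same route as the paper, whose proof just cites \eqref{eq:at_1_2_q0}: writing $w=(z-1)^{\ell}h(z)$ near $q_0$ with $h$ analytic and $h(1)=B^{-1}$ gives both the zero of order $2\ell$ and the expansion. Two details the paper leaves implicit, and which you handle correctly, are the absorption of $z^{-\tau N}$ into the $(N-r_k)$-th power (valid because $r_k$ is bounded on compact sets) and the reading of the exponent $N-r_2$ as $N-r_k$.
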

\begin{proof}
The result follows from \eqref{eq:at_1_2_q0} in Lemma \ref{lem:at_1}.
\end{proof}
We are now ready to proceed to study the asymptotic behaviour of the contour integrals in \eqref{eqn:kerneljdjs}.
\subsection{Asymptotic analysis of $\mathcal{J}_d$}\label{sec:Jd}
In this section we prove the asymptotic behaviour of the double contour integral, \eqref{eqn:Jd}, in the expression for the kernel of the studied process. 

To keep the notation compact, we introduce another version of the gauge factor from Theorem~\ref{thm:limit_correlation_function},
\begin{equation}\label{eqn:new_gauge}
h(\ell x+i,2y+j)=\sigma^{-(\ell x+i)}g(\ell x+i,2y+j).
\end{equation} 
Then, in the coordinates given in \eqref{eqn:proof_coordinates},
\begin{equation}\label{eqn:gauge_proof_coordinates}
\frac{h(\ell x_1+i_1,2y_1+j_1)}{h(\ell x_2+i_2,2y_2+j_2)}
=N^{\frac{\ell r_2-i_2}{2}-\frac{\ell r_1-i_1}{2}} 
B^{r_2-r_1}
\frac{\tilde g(\ell x_1+i_1,2y_1+j_1)}{\tilde g(\ell x_2+i_2,2y_2+j_2)}.
\end{equation}

We first prove that we can restrict our attention to a double-contour integral for which the contours of integration are contained in a small neighbourhood of $q_{\infty}$.
\begin{lemma}
\label{lem:gcan}
Let $R_2>R_1>0$, then
 \begin{multline}\label{eqn:gcan}
\mathcal{J}_d=
\frac{1}{(2\pi\i)^2}\int_{\tilde \Gamma_s'}\int_{\tilde \Gamma_l'}
\boldsymbol{\mathcal{M}}_{i_1,j_1,i_2,j_2}(z_1,w_1;z_2,w_2) 
\exp\left({N(G(z_2,w_2)-G(z_1,w_1))}\right)
\frac{w_1^{r_1}}{w_2^{r_2}}
 \frac{z_1^{\mu_1\sqrt{N}}}{z_2^{\mu_2\sqrt{N}}}
 \frac{\d z_2\d z_1}{z_2(z_2-z_1)}
\\+N^{-\frac{1}{2}}\frac{h(\ell x_2+i_2,2y_2+j_2)}{h(\ell x_1+i_1,2y_1+j_1)}\mathcal{O}\left(e^{-\frac{\sigma^2}{4}\log^2 N}\right),\quad \text{as}\quad N\rightarrow\infty
\end{multline}
where the curves of integration have been replaced by $\widetilde{\Gamma}_s'$, a circle of radius $\frac{R_1}{\sqrt{N}}$ around $q_{\infty}$ and $\widetilde{\Gamma}_l'$ is a curve to the right of $\widetilde{\Gamma}_s'$ first treading over a half-circle of radius  $\frac{R_2}{\sqrt{N}}$ and then from  $z_2=1+i\frac{R_2}{\sqrt{N}}$ to $z_2=1+i\frac{\log N}{\sqrt{N}}$ along the $y$ axis and from $z_2=1-i\frac{R_2}{\sqrt{N}}$ to $z_2=1-i \frac{\log N}{\sqrt{N}}$ along the $y$-axis, see Figure \ref{fig:local_sight}. The implicit constant in the remainder term is uniform on compact subsets of $\Lambda_{\{0,1\}}^2$.
\end{lemma}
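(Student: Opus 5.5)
The plan is to deform $\tilde\Gamma_s$ and $\tilde\Gamma_l$ into contours adapted to the level structure of $\re G$, and then cut off everything outside an $N^{-1/2}\log N$-neighbourhood of $q_\infty$.

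\textbf{Choice of contours.} For the $z_2$ variable, where the integrand is $\exp(NG(z_2,w_2))$, we need $\re G(z_2)<\re G(q_\infty)$ away from $q_\infty$. We therefore take $\tilde\Gamma_s$ to be a small circle of radius $R_1/\sqrt N$ around $q_\infty$. Since $\tilde\Gamma_s$ must encircle $p_0$ and $q_\infty$, we justify this by deforming it and picking up residues. The integrand in $z_2$ is meromorphic on $\mathcal R$ with poles only at $p_0,p_\infty,q_0,q_\infty$ and at $z_2=z_1$ (Lemma~\ref{lem:global_matrix_meromorphicity}). The piece of $\tilde\Gamma_s$ around $p_0$ can be pushed outward onto a contour through $q_\infty$ homologous to $\Gamma_{des}$. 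Alternatively, and more naturally, we note that $\re G\to+\infty$ at $p_0$ and use the steepest descent contour $\Gamma_{des}$ of Lemma~\ref{lemma:curvesofsteepest}, which surrounds $p_0$ exactly once and passes through $q_\infty$ and $q_0$. Along it $\re G$ decreases strictly from $q_\infty$. We then indent it near $q_\infty$ by a circle of radius $R_1/\sqrt N$, so that $q_\infty$ lies inside.

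For the $z_1$ variable, with integrand $\exp(-NG(z_1,w_1))$, we want $\re G(z_1)>\re G(q_\infty)$. We take $\tilde\Gamma_l$ to be a steepest ascent contour through $q_\infty$, namely locally the vertical line $\re z=1$, closed up through regions where $\re G$ is large around $p_0$ and $p_\infty$. This uses $\re G(p_0)=\re G(p_\infty)=+\infty$ and the monotonicity of $G$ on $(1,\infty)$ and on $(0,1)$ from Corollary~\ref{lem:action_taylor}. We modify it near $q_\infty$ by a half-circle of radius $R_2/\sqrt N$ to the right of $\tilde\Gamma_s'$. When the contours cross, any residue at $z_1=z_2$ on the same sheet produces a single integral; I would expect it to cancel or be absorbed in the nesting, given the remark after Lemma~\ref{lem:global_matrix_meromorphicity}. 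I would check that the deformations respect the nesting condition of Theorem~\ref{thm:correlation_kernel}.

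\textbf{Tail estimate.} Once the contours are in place, we split each into the part within $|z-1|\le \log N/\sqrt N$ of $q_\infty$ and the rest. On the rest:
\begin{itemize}
\item By the quadratic expansion of Corollary~\ref{lem:action_taylor}, we have $\re G(z_2)-G(1)\le -\tfrac{\sigma^2}{2}\cdot\tfrac{\log^2N}{N}(1+o(1))$ near the cutoff on the descent contour.
\item Similarly, $\re G(z_1)-G(1)\ge \tfrac{\sigma^2}{2}\cdot\tfrac{\log^2 N}{N}$ on the vertical segment, since $\re (Z^2)=-|Z|^2$ there.
\item By monotonicity along the steepest curves, these bounds persist globally, and compactness gives a uniform gap $\delta>0$ further away.
\end{itemize}
Multiplying by $N$ gives factors $e^{-\frac{\sigma^2}{2}\log^2N}$.

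The remaining factors are controlled as follows. The matrix factor $\boldsymbol{\mathcal M}$ is polynomially bounded by Lemma~\ref{lem:matrix_local} and meromorphicity. The factors $w^{r}$ and $z^{\mu\sqrt N}$ contribute at most $e^{C\sqrt N|z-1|}$-type growth, which is $e^{O(\log N)}$ near the cutoff. Near $q_0$ we use Lemma~\ref{lemma:G_at_q0}, which shows the integrand vanishes to high order. The $1/(z_2-z_1)$ factor is bounded because the contours are separated. Comparing with the gauge normalization $h_2/h_1$ from \eqref{eqn:gauge_proof_coordinates} absorbs the powers $N^{\ell r/2}B^{r}$. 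The resulting error is $e^{-\frac{\sigma^2}{2}\log^2N+O(\log N)}\le e^{-\frac{\sigma^2}{4}\log^2N}$ for large $N$, uniformly on compacts.

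\textbf{Main obstacle.} The hardest step is the global geometry: showing that $\tilde\Gamma_l$ can be chosen as a genuine ascent contour on the higher genus surface. It must enclose $p_0$ and $q_\infty$, but not $q_0$, while keeping $\re G\ge G(1)+c\log^2N/N$ away from $q_\infty$. It must also handle crossings of cuts and compact ovals, as in the proof of Lemma~\ref{lemma:curvesofsteepest}. I would build it by the same procedure, following level lines of $\im G$ with $\re G$ increasing and ending at $p_0$ and $p_\infty$. By symmetry under conjugation, this yields a closed curve of the required homotopy class.
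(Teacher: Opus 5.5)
There is a genuine gap, located in the step you yourself call the main obstacle, and it comes from attaching the variables to the wrong contours. In \eqref{eqn:Jd} the inner integral is in $z_2$ over $\tilde\Gamma_l$ and the outer one is in $z_1$ over $\tilde\Gamma_s$; this is also how the limit in Lemma~\ref{lem:Jdvalue} comes out, with $z_1$ on the circle.

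The $z_2$-part $(z_2-1)^{\ell N}w_2^{N-r_2}z_2^{-y_2-1}$ has a pole of order roughly $\tau N$ at $p_0$. So it is the $z_2$-contour that must keep surrounding $p_0$. For it, your choice of $\Gamma_{des}$ and your tail estimate are essentially the paper's.

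The $z_1$-part $(z_1-1)^{-\ell N}w_1^{r_1-N}z_1^{y_1}$, however, has a \emph{zero} of order $y_1$ at $p_0$. Since $(z_1-1)^\ell w_1\to B$ at $q_\infty$, it has only a pole of order $\ell r_1$ there. The remaining factor $\boldsymbol{\mathcal M}\,\d z_1\d z_2/(z_2(z_2-z_1))$ does not depend on $N$. By Lemma~\ref{lem:global_matrix_meromorphicity} it is singular only at $p_0,p_\infty,q_0,q_\infty$ and at $z_1=z_2$.

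Hence, for large $N$, the $z_1$-integrand is analytic inside $\tilde\Gamma_s$ except at $q_\infty$. So $\tilde\Gamma_s$ can be shrunk directly to $|z_1-1|=R_1/\sqrt N$ without picking up any residue, and on that circle $N\re(G(z_1,w_1)-G(1))=\Ordo(1)$. This is the missing idea: no contour adapted to $\re G$ is needed for $z_1$ at all.

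The only global deformation is then $\tilde\Gamma_l\to\Gamma_{des}$. Passing through $q_0$ is harmless by Lemma~\ref{lemma:G_at_q0}, and no residue at $z_1=z_2$ appears because the small circle stays inside. Near $q_\infty$ the contour is replaced by vertical segments and the half-circle of radius $R_2/\sqrt N$ on the right.

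The ascent contour you propose would also fail as written. Since $G(z)-G(1)=\frac{\sigma^2}{2}(z-1)^2+\Ordo((z-1)^3)$ with $\sigma^2>0$, the vertical line $\re z=1$ is the steepest \emph{descent} direction at $q_\infty$; it is exactly how $\Gamma_{des}$ leaves $q_\infty$. Your claim that $\re G(z_1)-G(1)\ge\frac{\sigma^2}{2}\frac{\log^2N}{N}$ on the vertical segment therefore has the wrong sign. There $\re(Z_1^2)=-|Z_1|^2$ gives $\re G(z_1)-G(1)\approx-\frac{\sigma^2}{2}\frac{|Z_1|^2}{N}$. Consequently $|\e^{-NG(z_1,w_1)}|\approx\e^{-NG(1)}\e^{\frac{\sigma^2}{2}|Z_1|^2}$ grows and destroys the cut-off.

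The true ascent directions at $q_\infty$ are horizontal, towards $p_0$ and towards $p_\infty$. These lie on opposite sides of $\Gamma_{des}$, on which $\re G<G(1)$ away from $q_\infty$. So it is not even clear that a closed ascent contour around $p_0$ and $q_\infty$ of the kind you describe exists.

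Relatedly, your final configuration does not reproduce the nesting in the statement. You have the descent contour indented by a circle of radius $R_1/\sqrt N$ as the inner contour, and an ascent contour with a half-circle of radius $R_2/\sqrt N$ as the outer one. The hoped-for cancellation of residues at $z_1=z_2$ becomes unnecessary once the contours are assigned correctly.
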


\begin{proof}

We denote the integrand
\begin{equation}\label{eqn:integrand_to_bound}
I_N=\boldsymbol{\mathcal{M}}_{i_1,j_1,i_2,j_2}(z_1,w_1;z_2,w_2) 
\exp\left({N(G(z_2,w_2)-G(z_1,w_1))}\right)
\frac{w_1^{r_1}}{w_2^{r_2}}
 \frac{z_1^{\mu_1\sqrt{N}}}{z_2^{\mu_2\sqrt{N}}}
 \frac{1}{z_2(z_2-z_1)}.
\end{equation}
As a function of $(z_1,w_1)$, it has a zero at $p_0$ and not a pole and hence is analytic in the interior of $\Gamma_s$ except at $q_{\infty}$, see Lemma \ref{lem:global_matrix_meromorphicity}. Thus we can deform $\widetilde{\Gamma}_s$ to $\widetilde{\Gamma}_s'$ (a small circle around $q_{\infty}$), without changing the value of the integral.

We define the contour $\Gamma_{des}^{(N)}$ to be the union of several pieces, see Figure \ref{fig:local_sight}:
\begin{enumerate}
\item The contour of steepest descent $\Gamma_{des}$, of $G(z,w)$ except in a ball of radius $N^{-\frac{1}{4}+\varepsilon}$ centered at $q_{\infty}$. 

\item Let $\zeta_N$ be the point in $\Gamma_{des}\cap \partial B(q_{\infty}, N^{-\frac{1}{4}+\varepsilon})$, with positive imaginary part. We choose this piece of ($\Gamma_{des}^{(N)}$) to thread from $\zeta_N$ to $1+iN^{-1/4+\varepsilon}$ along the shortest arc of $\partial B(q_{\infty}, N^{-\frac{1}{4}+\varepsilon})$ connecting them. Symmetrically we let the curve also tread along the shortest arc connecting $\overline{\zeta_N}$ to $1-iN^{-\frac{1}{4}+\varepsilon}$.

\item $\Gamma_{des}^{(N)}$ treads from $1+iN^{-\frac{1}{4}+\varepsilon}$ to $1+i\frac{\log N}{N^{\frac{1}{2}}}$ and $1-i N^{-\frac{1}{4}}$ to $1-i\frac{\log N}{N^{\frac{1}{2}}}$, parallel to the imaginary axis. %
\item The last piece is the curve $\widetilde{\Gamma}_s'$ described in the statement.

 \end{enumerate}
 \begin{figure}
 \begin{center}
\begin{tikzpicture}[scale=1.1, >=Stealth]

\draw[lightgray, thin, ->] (-3.1,0) -- (3.1,0) node[right, black] {$\Re$};
\draw[lightgray, thin, ->] (0,-3.3) -- (0,3.4) node[above, black] {$\Im$};

\filldraw[black] (0,0) circle (1pt) node[below=0pt, xshift=-2pt] {$q_{\infty}$};
\filldraw[black] (0,1.5) circle (1pt) node[left] {$i\log N$};
\filldraw[black] (0,-1.5) circle (1pt) node[left] {$-i\log N$};
\filldraw[black] (0,2.6) circle (1pt) node[left] {$iN^{\frac{1}{4}-\varepsilon}$};
\filldraw[black] (0,-2.6) circle (1pt) node[left] {$-iN^{\frac{1}{4}-\varepsilon}$};

\filldraw[black] (0,0.4) circle (1pt);
\node[black, left] at (-0.1,0.45) {$iR_1$}; 
\filldraw[black] (0,-0.5) circle (1pt);
\node[black, right] at (0.1,-0.55) {$-iR_2$}; 

\draw[blue, thick] (0,0) circle (0.4);
\node[blue, left] at (-0.4,0.1) {$\widetilde{\Gamma}_s'$};

\draw[red, thick] (0,0.5) arc (90:-90:0.5);
\draw[red, thick] (0,0.5) -- (0,1.5);
\draw[red, thick] (0,-0.5) -- (0,-1.5);
\node[red, right] at (0.3,0.8) {$\widetilde{\Gamma_{\ell}}'$};
\node[red, right, font=\large] at (0.02,-1.0) {${}^{(4)}$}; 

\draw[orange, thick] (0,1.5) -- (0,2.6);
\draw[orange, thick] (0,-1.5) -- (0,-2.6);
\node[orange, right, font=\large] at (0.02,2.1) {${}^{(3)}$};
\node[orange, right, font=\large] at (0.02,-2.1) {${}^{(3)}$};

\draw[lightgray, dashed] (0,0) circle (2.6);

\pgfmathsetmacro{\ycoord}{sqrt(2.6^2 - 0.8^2)} 

\coordinate (zeta) at (0.8, \ycoord);
\coordinate (tzeta) at (0.8, -\ycoord);

\filldraw[black] (zeta) circle (1pt) node[above right] {$\zeta_N$};
\filldraw[black] (tzeta) circle (1pt) node[below right] {$\widetilde{\zeta_N}$};

\draw[green!50!black, thick] (zeta) arc[start angle=72.1, end angle=90, radius=2.6];
\draw[green!50!black, thick] (tzeta) arc[start angle=-72.1, end angle=-90, radius=2.6];

\node[green!50!black, font=\large] at (0.4, 2.75) {${}^{(2)}$};
\node[green!50!black, font=\large] at (0.4, -2.75) {${}^{(2)}$};

\pgfmathsetmacro{\xone}{1*0.8}
\pgfmathsetmacro{\yone}{1*\ycoord}
\pgfmathsetmacro{\xonefive}{1.3*0.8}
\pgfmathsetmacro{\yonefive}{1.3*\ycoord}
\pgfmathsetmacro{\xonesix}{1.4*0.8}
\pgfmathsetmacro{\yonesix}{1.4*\ycoord}

\draw[purple, thick] (\xone, \yone) -- (\xonefive, \yonefive);
\draw[purple, thick, dashed] (\xonefive, \yonefive) -- (\xonesix, \yonesix);

\node[purple, right, font=\large] at ({1.25*0.8}, {1.25*\ycoord}) {${}^{(1)}$};

\pgfmathsetmacro{\xtildeone}{1*0.8}
\pgfmathsetmacro{\ytildeone}{-1*\ycoord}
\pgfmathsetmacro{\xtildeonefive}{1.3*0.8}
\pgfmathsetmacro{\ytildeonefive}{-1.3*\ycoord}
\pgfmathsetmacro{\xtildeonesix}{1.4*0.8}
\pgfmathsetmacro{\ytildeonesix}{-1.4*\ycoord}

\draw[purple, thick] (\xtildeone, \ytildeone) -- (\xtildeonefive, \ytildeonefive);
\draw[purple, thick, dashed] (\xtildeonefive, \ytildeonefive) -- (\xtildeonesix, \ytildeonesix);

\node[purple, left, font=\large] at ({1.25*0.8}, {-1.25*\ycoord}) {${}^{(1)}$};

\end{tikzpicture}

 \end{center}
\caption{Schematic representation on the sheet of $\mathcal{R}$ containing $q_{\infty}$ of the local sight of the curves $\widetilde{\Gamma}_{s}'$ (in blue), and $\Gamma_{des}^{(N)}$   with coordinates centred at $q_{\infty}$ and rescaled by $N^{\frac{1}{2}}$. The incomplete purple curve corresponds to piece $4$ of $\Gamma_{des}^{(N)}$. Correspondingly in green, yellow, and red are drawn pieces 2., 3., and 4. 
} \label{fig:local_sight} 
\end{figure}
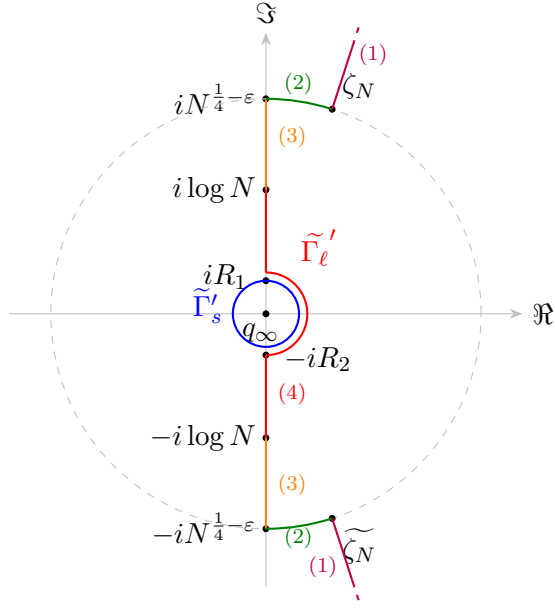

We deform $\widetilde{\Gamma}_{\ell}$ to $\Gamma_{des}^{(N)}$ which can be done since $p_0$ and $q_{\infty}$ are in the interior of both $\widetilde{\Gamma}_{\ell}$ and $\Gamma_{des}^{(N)}$.

In the rest of the proof we bound the integral in \eqref{eqn:integrand_to_bound} on the different pieces of $\widetilde{\Gamma}_{des}^{(N)}$ and $\widetilde{\Gamma}_s'$. 
We begin with several bounds valid for $(z_1,w_1)\in\widetilde{\Gamma}_s'$ and on $(z_2,w_2)$ on any piece of $\Gamma_{des}^{(N)}$. 
\begin{equation}\label{eqn:z1bound}
\left|z_1^{\mu_1 N^{\frac{1}{2}}}\right|\leq\left|1+{R_1}{N^{-\frac{1}{2}}}\right|^{\mu_1 N^{\frac{1}{2}}}\leq e^{\mu_1 R_1} \text{ and  } 
|w_1^{r_1}|
\leq
B^{r_1} R_1^{-\ell r_1} 
N^{\frac{\ell r_1}{2}} 
\left(1+\mathcal{O}\left( r_1N^{-\frac{1}{2}}\right)\right)
\text{ for }(z_1,w_1)\in\widetilde{\Gamma}_s'.
\end{equation}
Since $\Gamma_{des}^{(N)}$ is bounded away from $p_0$ and $p_{\infty}$ for some $C>1$ independent of $N$, we have that
 \begin{equation}\label{eqn:z2bound}
 \left| {z_2^{-\mu_2N^{\frac{1}{2}}}}\right|\leq C^{\mu_2N^{\frac{1}{2}}}=\exp\left(\mu_2N^{\frac{1}{2}}\log C\right),\quad\text{for}\quad (z_2,w_2)\in \Gamma_{des}^{(N)}.
 \end{equation}
By the same reasoning due to the Lemma \ref{lem:global_matrix_meromorphicity} 
$\boldsymbol{\mathcal{M}}_{i_1,j_1,i_2,j_2}(z_1,w_1;z_2,w_2)$ (which can only have poles of order at most $\ell$ at $q_0$ and $q_{\infty}$) for some $C>0$ (independent of $N$) satisfies
\begin{multline}\label{eqn:global_matrix_proof}
\left|\boldsymbol{\mathcal{M}}_{i_1,j_1,i_2,j_2}(z_1,w_1;z_2,w_2)\right | \leq C\left(1+\frac{1}{|z_1-1|^{\ell}}\right)\left(1+\frac{1}{ |z_2-1 |^{\ell}}\right),
 \\\text{for}\quad (z_1,w_1)\in\widetilde{\Gamma}_s',\quad (z_2,w_2)\in\Gamma_{des}^{(N)}.
\end{multline}
In the setup for the analysis it is convenient to treat the case when $(z_2,w_2)$ is in a small neighbourhood of $q_0$ separately, though the integral is easy to control there. With some abuse of notation we denote the neighbourhood of $q_{0}$ corresponding to the ball $B(1, R_2 N^{-\frac{1}{2}})$ in the local chart to be $B(q_0, R_2 N^{-\frac{1}{2}})$. We use Lemmas \ref{lem:global_matrix_meromorphicity}, \ref{lemma:G_at_q0} to bound terms in \eqref{eqn:integrand_to_bound} depending on $(z_2,w_2)\in B(q_0, R_2 N^{-\frac{1}{2}})$, and Lemma  \ref{lemma:expansions} to bound terms depending only on $(z_1,w_1)\in \widetilde{\Gamma}_s'$, together with the bounds
  $|z_2-1|\leq R_2 N^{-\frac{1}{2}}$ and $|z_1-1|={R_1}{N^{-\frac{1}{2}}}$, to obtain that
 \begin{multline} \label{eqn:INq0}
 \left| \exp(N(G(z_2,w_2)-G(z_1,w_1)) \frac{w_1^{r_1}}{w_2^{r_2}} \frac{\boldsymbol{\mathcal{M}}_{i_1,j_1,i_2,j_2}(z_1,w_1;z_2,w_2)}{(z_2-z_1) z_2} \frac{z_1^{\mu_1 N^{\frac{1}{2}}}}{z_2^{\mu_2N^{\frac{1}{2}}}} \right|
 \\ \leq \exp\left(-2\ell N(1+o(1)) \frac{1}{2}\log N  \right)
 \text{ for $(z_2,w_2)\in B(q_0, R_2 N^{-\frac{1}{2}})$,  $(z_1,w_1)\in \widetilde{\Gamma}_s'$}.
 \end{multline}
We also have that
\begin{equation} \label{eqn:trivial}
z_2^{-1}=\mathcal{O}(1)\quad\text{and}\quad \frac{1}{(z_2-z_1)}=\mathcal{O}\left(\frac{N^{\frac{1}{2}}}{R_2-R_1}\right)
=\mathcal{O}(N^{\frac{1}{2}}), \text{ for } z_1\in \widetilde{\Gamma}_s',\quad  z_2\in\Gamma_{des}^{(N)}\setminus B(q_0, R_2N^{-\frac{1}{2}}).
\end{equation}
We proceed with bounds specific to the different pieces of $\Gamma_{des}^{(N)}$.
 
\begin{enumerate}
 \item
We begin with a bound on piece 2. 
On the local chart given by $(z,w)\mapsto z$ due to Lemma \ref{lemma:curvesofsteepest} we have $G'(1)>0$, and so 
 $\zeta_N=1+iN^{-\frac{1}{4}+\varepsilon}(1+o(1))$ (and $\overline{\zeta_N}= 1-iN^{-\frac{1}{4}+\varepsilon}(1+o(1))$)
and the same holds for the corresponding $z_2$ on piece 2.
  Evaluating the right-hand side of \eqref{eqn:local_action_contribution} in Lemma \ref{lemma:expansions} at
 $Z_2=\sqrt{N}(\zeta-1)=\pm iN^{\frac{1}{4}+\varepsilon}(1+o(1))$, 
we see that 
 \begin{multline}\label{eqn:on_arc}
\exp(N\re(G(\zeta,w(\zeta))-G(z_1,w_1))
 \leq \exp\left(-\frac{\sigma^2}{2} N^{\frac{1}{2}+2\varepsilon}(1+o(1))\right)
 \quad
 \\
 \text{for}
 \quad
 (z_1,w_1)\in \widetilde{\Gamma}_s',
\quad (\zeta, w(\zeta))\quad \text{as chosen above.}
\end{multline}
\item
We proceed to bound simultaneously in regard to piece 1 and piece 2 from this point onwards.  To bound the integrand on piece 1, we make use of the curve of steepest descent in the following way. We have that for $(z_2, w_2)$ on piece 1 of $\Gamma_{des}^{(N)}$,
 \begin{equation}
\exp(N\re(G(z_2,w_2)))
 \leq
  \exp\left(N\re G(\zeta_N, w(\zeta_N))\right)
 \quad \text{for} 
 \quad (z_2,w_2)
 \in \Gamma_{des}\setminus B(q_{\infty}, N^{-\frac{1}{4}+\varepsilon}),
 \end{equation}
and so it holds that
 \begin{multline}\label{eqn:steepestdescentafb}
 \exp(N\re(G(z_2,w_2)-G(z_1,w_1))
 \leq \exp\left(-\frac{\sigma^2}{2} N^{\frac{1}{2}+2\varepsilon}(1+o(1))\right),
 \\ \text{ for }
 (z_2,w_2)\text{ on piece 1 and 2 of $\Gamma_{des}^{(N)}$ and }
 (z_1,w_1)\in \widetilde{\Gamma}_s'.
 \end{multline}

Now since $w_2$ has a pole of order $\ell$ at $q_{\infty}$ and a zero of that order at $q_0$ (and at no other locations), it follows that for some $C>0$,
\begin{equation} \label{eqn:w_2_global_bound}
|w_2^{-r_2}|\leq C^{{r_2}} 
N^{\frac{\ell r_2}{2}} \quad \text{for}\quad w_2\in\Gamma_{des}^{(N)}\setminus B(q_0, {R_2}N^{-\frac{1}{2}}).
\end{equation}

Due to \eqref{eqn:INq0}, whenever  $(z_2,w_2)\in B(q_0, R_2 N^{-\frac{1}{2}})$ and else 
combining the bounds in \eqref{eqn:z1bound}, \eqref{eqn:z2bound}, 
\eqref{eqn:global_matrix_proof},
\eqref{eqn:w_2_global_bound}, \eqref{eqn:trivial},
 and \eqref{eqn:steepestdescentafb} for the different terms on the right-hand side of \eqref{eqn:integrand_to_bound},
 we see that for
$(z_1,w_1)\in \widetilde{\Gamma}_s'$ and $(z_2,w_2)$ 
on piece 1 or piece 2 of $\Gamma_{des}^{(N)}$,
\begin{equation}
I_N
=\mathcal{O}
\left(
\exp \left(
-\frac{\sigma^2}{2}N^{\frac{1}{2}+2\varepsilon}(1+o(1))
\right)
\right),\quad \text{as}\quad N\rightarrow\infty.
\end{equation}
Furthermore on compact subsets of $\Lambda_{\{0,1\}}^2$ the bound is uniform. As $\widetilde{\Gamma}_s'$ has length $\frac{2\pi R_1}{\sqrt{N}}$ and $\Gamma_{des}^{(N)}$ has a finite length, it follows that
\begin{multline} 
\left| 
\frac{1}{2\pi i}
\int_{\widetilde{\Gamma}_s'} 
\int_{\Gamma_{des}^{(N)}\setminus B(q_{\infty}, N^{-\frac{1}{4}+\varepsilon})}
I_N
dz_1 dz_2
\right|
=\mathcal{O}
\left(
\exp
\left(-\frac{\sigma^2}{2}N^{\frac{1}{2}+2\varepsilon}(1+o(1))
\right)
\right),\quad \text{as}\quad N\rightarrow\infty,
\end{multline}
uniformly on compact subsets of $\Lambda_{\{0,1\}}^2$. Since the gauge factor, \eqref{eqn:gauge_proof_coordinates}, is 
$\mathcal{O}\left(N^{\frac{\ell r_2-i_2}{2}-\frac{\ell r_1-i_1}{2}}\right)$ it follows straight away that 
\begin{multline} \label{eqn:cut_piece_1}
\frac{h(\ell x_1+i_1,2y_1+j_1)}{h(\ell x_2+i_2,2y_2+j_2)}
\left| 
\frac{1}{2\pi i}
\int_{\widetilde{\Gamma}_s'} 
\int_{\Gamma_{des}^{(N)}\setminus B(q_{\infty}, N^{-\frac{1}{4}+\varepsilon})}
I_N
dz_1 dz_2
\right|
\\=\mathcal{O}
\left(
\exp
\left(-\frac{\sigma^2}{2}N^{\frac{1}{2}+2\varepsilon}(1+o(1))
\right)
\right),\quad \text{as}\quad N\rightarrow\infty,
\end{multline}
uniformly on compact subsets of $\Lambda_{\{0,1\}}^2$.
We have now bounded the contribution to $\mathcal{J}_d$ from the first two pieces of $\Gamma_{des}^{(N)}$. 

\item
It remains to bound the contribution from piece 3 of $\Gamma_{des}^{(N)}$. We make the local change of variables around $q_{\infty}$, $z_k=1+\frac{Z_k}{\sqrt{N}},\quad \d z_k=N^{-\frac{1}{2}}\d Z_k,$ for $ k=1,2$. In these coordinates we integrate over the straight line where $Z_2\in i[\log N, N^{\frac{1}{4}+\varepsilon}] $ (and analogously $Z_2\in-i[\log N, N^{\frac{1}{4}+\varepsilon}])$ and $|Z_1|=R_1$.
We once again estimate the individual terms in $I_N$.
Due to Lemmas \ref{lemma:expansions}, \ref{lem:matrix_local} we have that for $|Z_1|=R_1$, $Z_2\in i[\log N, N^{\frac{1}{4}+\varepsilon}]$, correspondingly adapting \eqref{eqn:local_action_contribution}, \eqref{eqn:local_z_contribution}, \eqref{eqn:matrix_precise}, as $N\rightarrow\infty$,
\begin{equation}\label{eqn:piece4action}
\exp(N(G(z_2,w_2)-G(z_1,w_1)))=\exp\left(-\frac{\sigma^2}{2}Z_2^2(1+\mathcal{O}(N^{-\frac{1}{4}}))\right),
\end{equation}
\begin{equation}\label{eqn:piece4w}
w_1^{r_1}w_2^{-r_2}=
B^{r_1-r_2}
N^{\frac{\ell r_1}{2}-\frac{\ell r_2}{2}}
 {Z_2^{\ell r_2}}{R_1^{-\ell r_1}}\left(1+\mathcal{O}(N^{-\frac{1}{4}+\varepsilon})\right),
\end{equation}
\begin{equation}\label{eqn:piece4matrix}
\boldsymbol{\mathcal{M}}_{i_1,j_1,i_2,j_2}\left(z_1,w_1;z_2,w_2\right)
= 
{N^{\frac{i_2-i_1}{2}}}
\nu(\ell-i_2,j_2){Z_2^{-i_2}}
\frac{\tilde g(\ell x_2+i_2,2y_2+j_2)}{\tilde g(\ell x_1+i_1,2y_1+j_1)}
{R_1^{i_1}}
\left(1+\Ordo\left(N^{-1/2}\right)\right).
\end{equation}
Together with the observation that for $Z_2\in i\mathbb{R}$, $\left|1+\frac{Z_2}{N^{\frac{1}{2}}}\right|\geq 1$ and  \eqref{eqn:z1bound}, \eqref{eqn:piece4action}, \eqref{eqn:piece4w}, \eqref{eqn:piece4matrix} yield that in these new coordinates, as $N\rightarrow\infty$,
\begin{multline} \label{eqn:in_bound1}
|I_N|
\leq \exp(\mu_1 R_1) R_1^{-\ell r_1+i_1}(1+o(1))(\log N-R_1)^{-1}
\nu(\ell-i_2,j_2)
\\\times
B^{r_1-r_2} 
\frac{\tilde g(\ell x_2+i_2,2y_2+j_2)}{\tilde g(\ell x_1+i_1,2y_1+j_1)}
N^{\frac{\ell r_1 -i_1-\ell r_2+i_2}{2}} 
\\\times|Z_2|^{\ell r_2-i_2}
\exp\left(-\frac{\sigma^2}{2}|Z_2|^2(1+\mathcal{O}(N^{-\frac{1}{4}}))\right) \times N^{\frac{1}{2}},
\end{multline}
where the estimates are uniform on compact subsets of $\Lambda_{\{0,1\}}^2$. Thus, as the length of the closed curve is $2\pi R_1$, 
recognising the second line in \eqref{eqn:in_bound1} to be the inverse of the gauge factor, \eqref{eqn:gauge_proof_coordinates}, we have that
\begin{multline}\label{eqn:cut_piece_2}
N^{\frac{1}{2}}
\frac{h(\ell x_1+i_1,2y_1+j_1)}{h(\ell x_2+i_2,2y_2+j_2)}
\int_{Z_2\in i[\log N,N^{\frac{1}{4}+\varepsilon}]}\int_{|Z_1|=R_1} |I_N| \frac{|\d Z_1| |\d Z_2|}{N}
\\= 
\nu(\ell-i_2,j_2)
%
\times\mathcal{O}
\left(
\int_{\log N}^{N^{\frac{1}{4}+\varepsilon}}\xi^{\ell r_2-i_2} \exp\left(-\frac{\sigma^2}{2}\xi^2(1+o(1)) \right) \d \xi
\right)
\\=%
\mathcal{O}(N^{-\frac{\sigma^2}{2} \log N(1+o(1))}),\quad \text{as}\quad N\rightarrow\infty,
\end{multline}
where in the last line we used an estimate for Gaussian tails and the bound is uniform on compact subsets of $\Lambda_{\{0,1\}}^2$. 
The bounds in \eqref{eqn:cut_piece_1}, \eqref{eqn:cut_piece_2} prove \eqref{eqn:gcan}. 
\end{enumerate}
\end{proof}

We continue by computing asymptotic behaviour of the double-contour integral appearing in the previous lemma (on the right-hand side of \eqref{eqn:gcan}), which carries the main contribution to $\mathcal{J}_d$. Since this integral is completely contained in a small neighbourhood of $q_{\infty}$, we can fix our choice of local chart described in the beginning of the section and analyse it as an integral on $\mathbb{C}$. 
\begin{lemma}\label{lem:Jdvalue}
Let $\widetilde{\Gamma}_s'$ and $\widetilde{\Gamma}_l'$ be the curves in Lemma \ref{lem:gcan} and denote
\begin{equation}
\mathcal{\widetilde{J}}_d=\frac{1}{(2\pi\i)^2}\int_{\tilde \Gamma_s'}\int_{\tilde \Gamma_l'}
\boldsymbol{\mathcal{M}}_{i_1,j_1,i_2,j_2}(z_1,w_1;z_2,w_2) 
\exp\left({N(G(z_2,w_2)-G(z_1,w_1))}\right)
\frac{w_1^{r_1}}{w_2^{r_2}}
 \frac{z_1^{\mu_1\sqrt{N}}}{z_2^{\mu_2\sqrt{N}}}
 \frac{\d z_2\d z_1}{z_2(z_2-z_1)}.
 \end{equation}
 Then we have that, uniformly on compact subsets of $\Lambda_{\{0,1\}}^2$,
 \begin{multline}
 N^{\frac{1}{2}}
\frac{h(\ell x_1+i_1,2y_1+j_1)}{h(\ell x_2+i_2,2y_2+j_2)}
 \mathcal{\widetilde{J}}_d
 \\=
\frac{\nu(\ell-i_2,j_2)}{(2\pi\i)^2}\left(
\int_{|z_1|=R_1}
\int_{-i\infty}^{i\infty}
\e^{\frac{\sigma^2}{2}
(z_2^2-z_1^2)}\e^{\mu_1 z_1-\mu_2 z_2}\frac{z_2^{\ell r_2-i_2}}{z_1^{\ell r_1-i_1}}\frac{\d z_1\d z_2}{z_2-z_1}+\mathcal{O}(N^{-\frac{1}{4}})\right),
\quad\text{as}\quad N\rightarrow\infty,
\end{multline}
where $R_1$ is as in Lemma \ref{lem:gcan}, the closed contour around the origin is oriented counter-clockwise, and the contour connection of $-i\infty$ and $+i\infty$ is to the right of this contour. 
\end{lemma}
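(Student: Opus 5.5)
The plan is to change variables $z_k=1+Z_k/\sqrt N$, $k=1,2$, in the local chart at $q_\infty$. After this substitution $\widetilde\Gamma_s'$ becomes the circle $|Z_1|=R_1$. The contour $\widetilde\Gamma_l'$ becomes the half-circle $|Z_2|=R_2$ in $\re Z_2\ge 0$ together with the vertical segments $Z_2\in \pm i[R_2,\log N]$. Since $\d z_1\d z_2=N^{-1}\d Z_1\d Z_2$, we multiply the integrand by $N^{-1}$.

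Next we insert Lemma \ref{lemma:local_action_function} for the scalar part of the integrand and Lemma \ref{lem:matrix_local} for $\boldsymbol{\mathcal M}_{i_1,j_1,i_2,j_2}$. The powers $N^{\frac{\ell r_1-\ell r_2}{2}}$, $N^{\frac{i_2-i_1}{2}}$ and $B^{r_1-r_2}$, together with the ratio of $\tilde g$'s, are exactly the inverse of the gauge ratio \eqref{eqn:gauge_proof_coordinates}. The factor $N^{1/2}$ from Lemma \ref{lemma:local_action_function}, combined with $N^{-1}$ from the Jacobian and the prefactor $N^{1/2}$ in the statement, gives order one. After these cancellations, $N^{1/2}\frac{h_1}{h_2}\widetilde{\mathcal J}_d$ equals
\begin{equation}
\frac{\nu(\ell-i_2,j_2)}{(2\pi\i)^2}\int_{|Z_1|=R_1}\int_{\gamma_N}\e^{\frac{\sigma^2}{2}(Z_2^2-Z_1^2)+\mu_1Z_1-\mu_2Z_2}\frac{Z_2^{\ell r_2-i_2}}{Z_1^{\ell r_1-i_1}}\frac{1+E_N(Z_1,Z_2)}{Z_2-Z_1}\d Z_1\d Z_2 ,
\end{equation}
where $\gamma_N$ is the rescaled image of $\widetilde\Gamma_l'$. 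The error $E_N$ collects the remainders from the two lemmas: $\Ordo((|Z_1|^3+|Z_2|^3)N^{-1/2})$ from the action and $\Ordo((|Z_1|+|Z_2|)N^{-1/2})$ from the matrix part, with constants uniform on compact subsets of $\Lambda_{\{0,1\}}^2$ because $r_k,i_k,\mu_k$ stay bounded there.

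The key step is controlling $E_N$. On $\gamma_N$ we have $|Z_2|\le \log N\le N^{1/12}$ for large $N$, so the last assertion of Lemma \ref{lemma:local_action_function} gives $E_N=\Ordo(N^{-1/4})$ uniformly. Care is needed because a relative error in the exponent multiplies $\e^{\sigma^2 Z_2^2/2}$, and the real part of $Z_2^2$ is nonpositive only near the imaginary axis. This is exactly why the contour is chosen vertical outside the half-circle: there $\re Z_2^2=-|Z_2|^2$. The exponent error $\Ordo(|Z_2|^3N^{-1/2})=\Ordo((\log N)^3N^{-1/2})$ is therefore $o(1)$ and can be absorbed multiplicatively. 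The remaining integrand, $|Z_2|^{\ell r_2-i_2}\e^{-\sigma^2|Z_2|^2/2}$ on the vertical pieces and bounded on the half-circle, is integrable and bounded independently of $N$. The distance $|Z_2-Z_1|\ge R_2-R_1>0$ is fixed, and $|Z_1|=R_1$ is fixed, so the $Z_1$-factors are bounded. Hence the total contribution of $E_N$ is $\Ordo(N^{-1/4})$.

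It then remains to extend $\gamma_N$ from $\pm i\log N$ to $\pm i\infty$. The added Gaussian tails $\int_{\log N}^\infty \xi^{\ell r_2-i_2}\e^{-\sigma^2\xi^2/2}\d\xi$ are $\Ordo(N^{-c\log N})$, far smaller than $N^{-1/4}$, as in \eqref{eqn:cut_piece_2}. The resulting contour runs from $-i\infty$ to $+i\infty$ and passes to the right of $|Z_1|=R_1$, as required. I expect no real obstacle beyond the uniformity bookkeeping in the $E_N$ step. The discrepancy between $y_p$ and $N\tau+\sqrt N\mu_p$ caused by the floor function only shifts $\mu_p$ by $\Ordo(N^{-1/2})$ and is absorbed into the same error term.
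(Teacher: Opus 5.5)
Your proposal is correct and follows essentially the same route as the paper. In both, one rescales at $q_\infty$ and inserts Lemmas \ref{lem:matrix_local} and \ref{lemma:local_action_function}, then recognises the inverse gauge factor \eqref{eqn:gauge_proof_coordinates}. The $\Ordo(N^{-1/4})$ relative error is then controlled by the absolute integrals over the half-circle and the vertical segments (the paper's $J_1$, $J_2$), and the Gaussian tails beyond $\pm \i\log N$ are added to reach the full line (the paper's $J_3$).
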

\begin{proof}
We make the local change of variables $z_i=1+\frac{Z_i}{N^\frac{1}{2}}$, $dz_i=\frac{dZ_i}{N^{\frac{1}{2}}}$ ~$i=1,2$, near $q_{\infty}$.
From the local expansions in Lemmas \ref{lem:matrix_local} and \ref{lemma:local_action_function} it follows that 
\begin{multline*}
N^{\frac{1}{2}}N^{\frac{\ell r_2-i_2}{2}-\frac{\ell r_1-i_1}{2}}
 B^{r_2-r_1}
\frac{\tilde g(\ell x_1+i_1,2y_1+j_1)}{\tilde g(\ell x_2+i_2,2y_2+j_2)}
\mathcal{\widetilde{J}}_d
=
\frac{\nu(\ell-i_2,j_2)}{(2\pi\i)^2}
\times
\\
\int_{|Z_1|=R_1}
\int_{\sqrt{N}(-1+\widetilde{\Gamma}_l')}
\frac{Z_2^{\ell r_2-i_2}}{Z_1^{\ell r_1-i_1}}\exp\left(\frac{\sigma^2}{2}(Z_2^2-Z_1^2)+Z_1\mu_1-Z_2\mu_2\right)
\frac{\left(1+\mathcal{O}(N^{-\frac{1}{4}})\right)}{(Z_2-Z_1)}
\d Z_1\d Z_2.
\end{multline*}
We recognise the gauge factor, \eqref{eqn:gauge_proof_coordinates}, and see that the pre-factor before the integral is the desired one. It remains to analyse the integral.
What follows is a comparison between the double-contour integral over the red and blue contours in Figures \ref{fig:local_sight} and \ref{fig:contours_gue_corners}.
 We separate the error term in the integrand and bound the integral that arises from it, see $J_1$ and $J_2$ below. We add and subtract the integral corresponding to connecting the $Z_2$ contour
 along the straight line from $i\log{N}$ to $i\infty$ and $-i\log N$ to $-i\infty$, contained in $J_3$ below. We obtain that as $N\rightarrow\infty$,
\begin{multline}\label{eqn:decomposition_local_computation}
\frac{1}{(2\pi\i)^2}\int_{|Z_1|=R_1}
\int_{\sqrt{N}(-1+\Gamma_l')}
\frac{Z_2^{\ell r_2-i_2}}{Z_1^{\ell r_1-i_1}}\exp\left(\frac{\sigma^2}{2}(Z_2^2-Z_1^2)+Z_1\mu_1-Z_2\mu_2\right)
\frac{\left(1+\mathcal{O}(N^{-1/4})\right)}{(Z_2-Z_1)}
\d Z_1\d Z_2
\\=
\frac{1}{(2\pi\i)^2}\int_{|Z_1|=R_1}
\int_{-i\infty}^{i\infty}
\frac{Z_2^{\ell r_2-i_2}}{Z_1^{\ell r_1-i_1}}\exp{\left(\frac{\sigma^2}{2}(Z_2^2-Z_1^2)+Z_1\mu_1-Z_2\mu_2\right)}
\frac{\d Z_1\d Z_2}{(Z_2-Z_1)}
\\+ \mathcal{O}\left(N^{-\frac{1}{4}}J_1(\ell_2 r_2-i_2, \mu_2, \ell_1r_1-i_1, \mu_1)+N^{-\frac{1}{4}}J_2(\ell_2 r_2-i_2, \mu_2, \ell_1r_1-i_1, \mu_1)\right)
+J_3,
\end{multline}
where the contour connection of $-i\infty$ and $i\infty$ is by construction to the right of the closed contour around the origin and
\begin{equation}
J_1= \int_{|Z_1|=R_1}\int_{Z_2=R_2e^{it2\pi}, t\in[-\pi/2,\pi,2]}
\frac{|Z_2|^{\ell r_2-i_2}}{|Z_1|^{\ell r_1-i_1}}\left|\exp\left(\frac{\sigma^2}{2}(Z_2^2-Z_1^2)+Z_1\mu_1-Z_2\mu_2\right)\right|
\frac{|\d Z_1||\d Z_2|}{|Z_2-Z_1|},
\end{equation}
\begin{equation}
J_2=  \int_{|Z_1|=R_1}\int_{Z_2\in[iR_2, i\log N]\bigcup [-i\log N, -iR_2]} 
\frac{|Z_2|^{\ell r_2-i_2}}{|Z_1|^{\ell r_1-i_1}}\left|\exp\left(\frac{\sigma^2}{2}(Z_2^2-Z_1^2)+Z_1\mu_1-Z_2\mu_2\right)\right|
\frac{|\d Z_1||\d Z_2|}{|Z_2-Z_1|},
\end{equation}
\begin{equation}
J_3=\int_{|Z_1|=R_1}\int_{Z_2\in[i\log N, i\infty]\bigcup (-i\infty, -i\log N]} 
\frac{Z_2^{\ell r_2-i_2}}{Z_1^{\ell r_1-i_1}}\exp{\left(\frac{\sigma^2}{2}(Z_2^2-Z_1^2)+Z_1\mu_1-Z_2\mu_2\right)}
\frac{\d Z_1\d Z_2}{(Z_2-Z_1)}.
\end{equation}
Finally we see that
\begin{equation}
J_1\leq \frac{2\pi^2 R_1R_2}{R_2-R_1} \frac{R_2^{\ell r_2-i_2}}{R_1^{\ell r_1-i_1}}
\exp\left(\frac{\sigma^2}{2}(R_1^2+R_2^2)+R_1\mu_1+R_2\mu_2\right), 
\end{equation}
which on compact subsets of $\Lambda_{\{0,1\}}^2$ (in the described embedding) is uniformly bounded. Similarly
\begin{equation}
J_2\leq 4\pi \frac{R_1}{R_2-R_1} R_1^{i_1-\ell r_1}e^{\frac{\sigma^2}{2} R_1^2+R_1\mu_1}\int_{R_2}^{\infty} t^{\ell r_2-i_2} e^{-\frac{\sigma^2}{2} t^2}\d t,
\end{equation}
is uniformly bounded on compact subsets of $\Lambda_{\{0,1\}}^2$. Finally identically to $J_2$, we bound $J_3$,
\begin{equation}
\left|J_3\right|
\leq 4\pi \frac{R_1}{\log N-R_1} R_1^{i_1-\ell r_1}
e^{\frac{\sigma^2}{2} R_1^2+R_1\mu_1}
\int_{\log N}^{\infty} t^{\ell r_2-i_2} e^{-\frac{\sigma^2}{2} t^2}\d t= \mathcal{O}\left(e^{-\frac{\sigma^2}{2} (\log N)^2(1+o(1))}\right),
\end{equation}
 as $N\rightarrow\infty$. We use the bounds on $J_1$, $J_2$, $J_3$ in \eqref{eqn:decomposition_local_computation} to deduce the result. 
\end{proof}

\subsection{Asymptotic analysis of $\mathcal{J}_s$}\label{sec:Js}
We analyse $\mathcal{J}_s$ (recall \eqref{eqn:Js}) in this section. 
\begin{lemma}
\label{lemma:Js}
Assuming $\ell r_1-i_1> \ell r_2-i_2$, we have that 
\begin{multline}
N^{\frac{1}{2}}
\frac{h(\ell x_1+i_1,2y_1+j_1)}{h(\ell x_2+i_2,2y_2+j_2)}
 \mathcal{J}_{s}=
\\=\begin{cases}
\mathcal{O}
\left(N^{\frac{\ell r_2-i_2}{2}-\frac{\ell r_1-i_1}{2}+\frac{1}{2}} \exp\left(-\sqrt{N}\log 2(\mu_1-\mu_2)\right)\right) 
 &\text{for } \mu_1 \leq \mu_2,
\\
\frac{\nu(\ell-i_2,j_2)}{2\pi i}
\int_{|Z|=1} 
  Z^{\ell r_2-i_2-\ell r_1+i_1}
\exp((\mu_1-\mu_2) Z)
dZ
+
\mathcal{O}\left(N^{-\frac{1}{2}}\right)
&\text{for} \quad \mu_1>\mu_2,
\end{cases}
\end{multline}
as $N\rightarrow\infty$ and in particular the left-hand side is uniformly bounded in $N$ on compact subsets of $\Lambda_{\{0,1\}}^2$, under the constructed embedding, and converges point-wise.
\end{lemma}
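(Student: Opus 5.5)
The single integral $\mathcal J_s$ in \eqref{eqn:Js} is a residue-type integral. The plan is to shrink $\tilde\Gamma_s$ onto a neighbourhood of the only relevant singularity, $q_\infty$, and then read off the leading term from the local expansions of Lemma~\ref{lem:matrix_local} and Lemma~\ref{lemma:expansions}.

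First, the integrand is $\boldsymbol{\mathcal M}_{i_1,j_1,i_2,j_2}(z,w;z,w)\,w^{r_1-r_2}z^{(\mu_1-\mu_2)\sqrt N}\,\d z/z$. By Lemma~\ref{lem:global_matrix_meromorphicity} (applied with both points equal), together with the factor $1/z$ and the power of $z$, the only singularities inside $\tilde\Gamma_s$ are at $p_0$ and $q_\infty$.

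\textbf{Case $\mu_1>\mu_2$.} Here $z^{(\mu_1-\mu_2)\sqrt N}$ vanishes at $p_0$ to high order, which cancels the simple pole of $\d z/z$ there. Moreover, $w$ is $1$ at $p_0$ and $\boldsymbol{\mathcal M}$ is at most polynomially singular there. So for $N$ large the integrand is analytic at $p_0$, and $\tilde\Gamma_s$ can be deformed to the circle $|z-1|=N^{-1/2}$ around $q_\infty$ on its sheet.

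Substituting $z=1+Z/\sqrt N$, Lemma~\ref{lem:matrix_local} (with $Z_1=Z_2=Z$) gives the matrix factor as $\nu(\ell-i_2,j_2)Z^{i_1-i_2}N^{(i_2-i_1)/2}\tilde g_2/\tilde g_1\,(1+\Ordo(N^{-1/2}))$. Equation~\eqref{eqn:local_w_contribution} gives $w^{r_1-r_2}=Z^{-\ell(r_1-r_2)}N^{\ell(r_1-r_2)/2}B^{r_1-r_2}(1+\Ordo(N^{-1/2}))$, and \eqref{eqn:local_z_contribution} gives $z^{(\mu_1-\mu_2)\sqrt N}=\e^{(\mu_1-\mu_2)Z}(1+\Ordo(N^{-1/2}))$. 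Finally $\d z/z=N^{-1/2}\d Z(1+\Ordo(N^{-1/2}))$.

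Multiplying by $N^{1/2}h_1/h_2$ and using \eqref{eqn:gauge_proof_coordinates}, all powers of $N$, $B$ and $\tilde g$ cancel. What remains is $\nu(\ell-i_2,j_2)\frac{1}{2\pi\i}\oint_{|Z|=1}Z^{\ell r_2-i_2-\ell r_1+i_1}\e^{(\mu_1-\mu_2)Z}\d Z$. The error is $\Ordo(N^{-1/2})$, because on the fixed circle $|Z|=1$ every remainder is uniformly $\Ordo(N^{-1/2})$ on compacts.

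\textbf{Case $\mu_1\le\mu_2$.} The power $z^{(\mu_1-\mu_2)\sqrt N}$ now has a large pole at $p_0$, so we must not shrink toward it. Instead I would keep the contour as the circle $|z-1|=\tfrac12$ around $q_\infty$ on its sheet. This circle may first need adjusting so that it still encloses $p_0$ and stays away from $q_0$ and the branch points, or else one splits into two contours, one around each of $p_0$ and $q_\infty$.

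A cleaner route is to note that the residue at $p_0$ plus the residue at $q_\infty$ equals minus the contributions outside $\tilde\Gamma_s$, namely at $p_\infty$ and $q_0$. The simplest option, though, is to bound the original integral directly. I would choose $\tilde\Gamma_s$ to pass through points with $|z|\ge 2$ where possible, aiming for the stated factor $\e^{-\sqrt N\log 2\,(\mu_1-\mu_2)}=2^{(\mu_2-\mu_1)\sqrt N}$, which would come from $|z|^{(\mu_1-\mu_2)\sqrt N}$ with $|z|=2$.

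Concretely, I would take the contour near $q_\infty$ to be a circle $|z-1|=N^{-1/2}$ and bound crudely: $|w^{r_1-r_2}|\lesssim N^{\ell(r_1-r_2)/2}$, $|\boldsymbol{\mathcal M}|\lesssim N^{\ell}$, and the length is $N^{-1/2}$. Combined with the gauge factor $N^{(\ell r_2-i_2)/2-(\ell r_1-i_1)/2}$, this yields a polynomial bound of the stated form.

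\textbf{Main obstacle.} The hardest step is to make the $\mu_1\le\mu_2$ bound honest: choosing a contour on $\mathcal R$ that encircles both $p_0$ and $q_\infty$, avoids $q_0$, and on which $|z|$ is controlled. I expect the stated estimate to be tailored to such a contour, possibly with the $\log 2$ factor absorbed into the bounded factor when $\mu_1=\mu_2$. In either case the left-hand side stays uniformly bounded on compacts; pointwise convergence for $\mu_1<\mu_2$ is to zero, since the polynomial prefactor is killed by the exponential.
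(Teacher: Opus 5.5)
Your case $\mu_1>\mu_2$ matches the paper's argument. The paper also drops the loop around $p_0$, which contributes nothing once $(\mu_1-\mu_2)\sqrt N$ exceeds the pole order there, and reads off the $|Z|=1$ integral from Lemmas~\ref{lem:matrix_local} and~\ref{lemma:expansions}. The case $\mu_1\le\mu_2$, however, has a genuine gap.

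The contour you finally settle on, the circle $|z-1|=N^{-1/2}$ around $q_\infty$, is not a valid deformation of $\tilde\Gamma_s$. For $\mu_1\le\mu_2$ the point $p_0$ is a pole of order growing like $(\mu_2-\mu_1)\sqrt N$, and you never control its contribution.

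More importantly, the $q_\infty$ piece on its own is not small. Your local computation does not depend on the sign of $\mu_1-\mu_2$. So, writing $t_p=\ell r_p-i_p$, that circle contributes after the gauge factor $\nu(t_2,j_2)\frac{(\mu_1-\mu_2)^{t_1-t_2-1}}{(t_1-t_2-1)!}+\Ordo(N^{-1/2})$. This is nonzero for $\mu_1\neq\mu_2$, and it is cancelled by the $p_0$ residue. Hence no estimate of the $q_\infty$ circle alone can produce the exponential factor. Nor can it give the pointwise convergence to $0$ for $\mu_1<\mu_2$, which is needed to discard $\mathcal J_s$ in the proof of Theorem~\ref{thm:limit_correlation_function}. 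In fact your crude bound $|\boldsymbol{\mathcal M}|\lesssim N^\ell$ would give a bound that grows polynomially in $N$.

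The missing step is to push the contour \emph{outward}, away from both $p_0$ and $q_\infty$. You gesture at this with ``$|z|\ge 2$'' and the residue remark, but then abandon it.
\begin{itemize}
\item Since $\boldsymbol{\mathcal M}_{i_1,j_1,i_2,j_2}(z,w;z,w)\,w^{r_1-r_2}$ vanishes at $q_0$, the only singularities of the integrand are $p_0$, $q_\infty$ and $p_\infty$. The paper therefore replaces $\tilde\Gamma_s$ by (the preimage on $\mathcal R$ of) the circle $|z|=2$.
\item On that circle $|z^{(\mu_1-\mu_2)\sqrt N}|=2^{-(\mu_2-\mu_1)\sqrt N}$, and every other factor is independent of $N$ and bounded on compacts.
\item Write $h_p=h(\ell x_p+i_p,2y_p+j_p)$ and similarly $\tilde g_p$. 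The prefactor is $N^{1/2}h_1/h_2=N^{\frac12+\frac{t_2-t_1}{2}}B^{r_2-r_1}\tilde g_1/\tilde g_2$, a nonpositive power of $N$ since $t_1-t_2\ge 1$.
\item Multiplying gives $\Ordo\bigl(N^{\frac{t_2-t_1+1}{2}}2^{-(\mu_2-\mu_1)\sqrt N}\bigr)$. This is uniformly bounded on compacts and decays for $\mu_1<\mu_2$.
\end{itemize}
Equivalently, $\mathcal J_s=-\mathrm{Res}_{p_\infty}$, which even vanishes for $\mu_1<\mu_2$ and large $N$.

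Watch the sign. On $|z|=2$ one gets $2^{-(\mu_2-\mu_1)\sqrt N}$, not $2^{(\mu_2-\mu_1)\sqrt N}$ as you wrote. The exponent in the displayed statement has a sign typo. The decaying version is what the proof yields, and it is what your closing sentence tacitly uses.

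Finally, ``$N$ large'' in your $\mu_1>\mu_2$ case depends on $\mu_1-\mu_2$. So it does not give uniform boundedness as $\mu_1\downarrow\mu_2$. The paper keeps the loop $\Gamma_s^{(1)}$ around $p_0$ and bounds its contribution by $\Ordo(N^{(t_2-t_1+1)/2})$ for $\mu_1\ge\mu_2$.
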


\begin{proof}
From Lemmas \ref{lem:at_1} and \ref{lem:matrix_local} we can deduce that
$\boldsymbol{\mathcal{M}}_{i_1,j_1,i_2,j_2}(z,w;z,w) 
w^{r_1-r_2}$
has a pole of order $(\ell r_1-i_1)-(\ell r_2-i_2)$ at $q_{\infty}$ and a zero at $q_0$.
\begin{enumerate}
\item
Now if $\mu_1 \leq \mu_2$, since $q_{\infty}$ and $p_0$ are in the interior of the curve $\tilde{\Gamma}_s$ and the integrand is analytic at $q_0$, we deform the contour so that $|z|=2$. Hence we have that 
\begin{equation}
(2\pi)^{-1}\left|\boldsymbol{\mathcal{M}}_{i_1,j_1,i_2,j_2}(z,w;z,w) 
w^{r_1-r_2}
z^{(\mu_1-\mu_2)\sqrt{N}-1}\right|
\leq H(\ell r_1, i_1, \ell r_2, i_2) 2^{-\sqrt{N}(\mu_2-\mu_1)-1}
,
\end{equation}
where $H$ is uniformly bounded on compact sets of $\Lambda_{\{0,1\}}$. 
Hence since the length of the curve of integration is finite and recalling the form of the gauge factor, \eqref{eqn:gauge_proof_coordinates},  
there is a constant $C>0$ (independent of $N$) such that
$$N^{\frac{1}{2}}
\frac{h(\ell x_1+i_1,2y_1+j_1)}{h(\ell x_2+i_2,2y_2+j_2)}  |\mathcal{J}_s|\leq 
 C B^{r_2-r_1} 
N^{\frac{\ell r_2-i_2}{2}-\frac{\ell r_1-i_1}{2}+\frac{1}{2}}
 H(\ell r_1, i_1, \ell r_2, i_2)  2^{-\sqrt{N}(\mu_2-\mu_1)-1}.
 $$
In particular since $\ell r_1-i_1-\ell r_2 +i_2 \geq 1$ we have a uniform bound on $\mathcal{J}_s$ for $\mu_1\leq \mu_2$ (on compact subsets of $\Lambda_{\{0,1\}}^2$). %
Pointwise, for $\mu_1<\mu_2$, we have that
\begin{equation}
N^{\frac{1}{2}}
\frac{h(\ell x_1+i_1,2y_1+j_1)}{h(\ell x_2+i_2,2y_2+j_2)}
 |\mathcal{J}_{s}|
=\mathcal{O}
\left(\exp\left(-\sqrt{N}\log 2(\mu_2-\mu_1)\right)\right), \quad \text{as}\quad N\rightarrow\infty.
\end{equation}
\item
Let $\mu_1\geq \mu_2$. In this case the pole at $p_0$ is finite and we can deform $\Gamma_s$ to a simple closed curve treading around $p_0$, $\Gamma_s^{(1)}$ with $p_{\infty}, q_{0}, q_{\infty}$ in its exterior, and a simple curve around $q_{\infty}$, $\Gamma_s^{(2)}$ with $p_0$, $q_0$, $p_{\infty}$ in its exterior. 

 We choose  $\Gamma_s^{(2)}$ to be a circle of radius $\frac{1}{\sqrt{N}}$ around $q_{\infty}$, and we make the change of variables $z=1+\frac{Z}{\sqrt{N}}, dz=\frac{dZ}{\sqrt{N}}$ (on the circle $|Z|=1$). By the identities proved in Lemmas \ref{lem:matrix_local} and \ref{lemma:expansions}, we have that
\begin{multline}
N^{\frac{1}{2}}
\frac{h(\ell x_1+i_1,2y_1+j_1)}{h(\ell x_2+i_2,2y_2+j_2)}
\frac{\one_{\ell r_1-i_1>\ell r_2-i_2}}{(2\pi\i)}\int_{ \Gamma_s^{(2)}}
\boldsymbol{\mathcal{M}}_{i_1,j_1,i_2,j_2}(z,w;z,w) 
w^{r_1-r_2}
z^{(\mu_1-\mu_2)\sqrt{N}}
\frac{\d z}{z}
\\=
%
\frac{\nu(\ell-i_2,j_2)}{2\pi i}
\int_{|Z|=1} Z^{\ell r_2-i_2-\ell r_1+i_1}
\exp((\mu_1-\mu_2) Z)
 \left(1+\mathcal{O}(N^{-\frac{1}{2}})\right)dZ
\\= \nu(\ell-i_2,j_2)
\left(
 \frac{1}{2\pi i}\int_{|Z|=1} 
  Z^{\ell r_2-i_2-\ell r_1+i_1}
\exp((\mu_1-\mu_2) Z)
\d Z
+\mathcal{O}\left(N^{-\frac{1}{2}}\right)\right),
\text{  as  }
 N\rightarrow\infty,
  \end{multline}
where the implicit constant is uniformly bounded on compact subsets of $\Lambda_{\{0,1\}}^2$.

The integral over $\Gamma_s^{(1)}$ is uniformly bounded, since the pole at $p_0$ of the integrand is of finite order (in fact $\leq 1$). Thus we have that uniformly on compact subsets of $\Lambda_{\{0,1\}}^2$,
\begin{multline}\label{eqn:justify_last_line}
N^{\frac{1}{2}}
\frac{h(\ell x_1+i_1,2y_1+j_1)}{h(\ell x_2+i_2,2y_2+j_2)}
\frac{\one_{\ell r_1-i_1>\ell r_2-i_2}}{(2\pi\i)}\int_{ \Gamma_s^{(1)}}
\boldsymbol{\mathcal{M}}_{i_1,j_1,i_2,j_2}(z,w;z,w) 
w^{r_1-r_2}
z^{(\mu_1-\mu_2)\sqrt{N}}
\frac{\d z}{z}
\\=\mathcal{O}\left(N^{\frac{\ell r_2-i_2-\ell r_1+i_1+1}{2}}\right),\quad\text{as } N\rightarrow\infty.
\end{multline}
If $\mu_1>\mu_2$, for $N$ sufficiently large, the integrand of $\mathcal{J}_s$ is analytic at $p_0$ and so the integral over $\Gamma_s^{(1)}$ is 0.
\end{enumerate}
The proof is now complete.
\end{proof}

\begin{remark}
A residue computation shows that
\begin{equation}
 \frac{1}{2\pi i}\int_{|Z|=1} 
  Z^{t_2-t_1}
\exp((\mu_1-\mu_2) Z)
\d Z
=
\begin{cases}
 \frac{(\mu_1-\mu_2)^{t_1-t_2-1}}{(t_1 - t_2-1)!}\quad\text{if}\quad t_1> t_2,
 \\ 0,\quad\text{otherwise.}
 \end{cases}
\end{equation}
This is the term given in \cite{OR06}.
\end{remark}
We now have all the ingredients to prove a convergence of the correlation kernel in Theorem \ref{thm:limit_correlation_function}.

\subsection{Proof of Theorem \ref{thm:limit_correlation_function}}\label{sec:proof}
We collect the asymptotic results proved in this section to prove Theorem \ref{thm:limit_correlation_function}. 
\begin{proof}[Proof of Theorem \ref{thm:limit_correlation_function}]
Recall once again the coordinates we introduced in this section, \eqref{eqn:kerneljdjs}, and the gauge transform in these coordinates, \eqref{eqn:gauge_proof_coordinates}, and observe that due to the periodicity of $\nu$,
$$\nu(\ell-i_2,j_2)= \nu(\ell r_2-i_2,j_2)=\nu(t_2,j_2).$$
Thus it follows from Lemmas \ref{lem:gcan}, \ref{lem:Jdvalue}, and \ref{lemma:Js} that for $\mu_1\not=\mu_2$, as $N\rightarrow\infty$,
\begin{multline}\label{eqn:limit_in_proof}
 N^{\frac{1}{2}}
\frac{h(\ell x_1+i_1,2y_1+j_1)}{h(\ell x_2+i_2,2y_2+j_2)}
K_{\operatorname{Int}}(\ell x_1+i_1, 2y_1+j_1; \ell x_2+i_2, 2y_2+j_2 )
\\\rightarrow
\frac{\nu(\ell-i_2,j_2)}{(2\pi\i)^2}
\int_{|z_1|=R_1}
\int_{-i\infty}^{i\infty}
\e^{\frac{\sigma^2}{2}
(z_2^2-z_1^2)}\e^{\mu_1 z_1-\mu_2 z_2}\frac{z_2^{\ell r_2-i_2}}{z_1^{\ell r_1-i_1}}\frac{\d z_1\d z_2}{z_2-z_1}
\\ - \one_{\ell r_1-i_1>\ell r_2-i_2, \mu_1>\mu_2}
 \frac{\nu(\ell-i_2,j_2)}{2\pi i}\int_{|z|=1} 
 z^{\ell r_2-i_2-\ell r_1+i_1}
e^{(\mu_1-\mu_2) z}
dz
\\=
\frac{\nu(t_2,j_2)}{(2\pi\i)^2}
\int_{|z_1|=R_1}
\int_{-i\infty}^{i\infty}
\e^{\frac{\sigma^2}{2}
(z_2^2-z_1^2)}\e^{\mu_1 z_1-\mu_2 z_2}\frac{z_2^{t_2}}{z_1^{t_1}}\frac{\d z_1\d z_2}{z_2-z_1}
\\ - \one_{t_1>t_2, \mu_1>\mu_2}
 \frac{\nu(t_2, j_2)}{2\pi i}\int_{|z|=1} 
 z^{t_2-t_1}
e^{(\mu_1-\mu_2) z}
dz,
\end{multline}
 and the contour connection of $-i\infty$ and $i\infty$ is to the right of the counter-clockwise oriented circle $|z|=R_1$.
The 
sequence on the left-hand side is uniformly bounded on compact subsets of $\Lambda_{\{0,1\}}^2$ (even for $\mu_1=\mu_2$).

To bring the limiting correlation kernel to the form stated in the theorem we make the following observations:
\begin{enumerate}
\item
 For $t_1\leq t_2$, $ z^{t_2-t_1}
e^{(\mu_1-\mu_2) z}$ is holomorphic and so
$$- \one_{t_1>t_2, \mu_1>\mu_2}
 \frac{1}{2\pi i}\int_{|z|=1} 
 z^{t_2-t_1}
e^{(\mu_1-\mu_2) z}
dz=- \one_{\mu_1>\mu_2}
 \frac{1}{2\pi i}\int_{|z|=1} 
 z^{t_2-t_1}
e^{(\mu_1-\mu_2) z}
dz.$$
\item
In the limiting double-contour integral, in the case $\mu_1>\mu_2$, moving the contour connection of $-i\infty$ to $i\infty$ to the left of $\{|z_1|=R_1\}$ corresponds exactly to picking up a residue at $z_1=z_2$ for $|z_1|=R_1$, that is an extra term of
$$+  \frac{\nu(t_2, j_2)}{2\pi i}\int_{|z|=R_1} 
 z^{t_2-t_1}
\exp((\mu_1-\mu_2) z)
\d z,$$
which then cancels the single integral.
\end{enumerate}
These observations, after multiplying the left-hand side in \eqref{eqn:limit_in_proof} with $\sigma^{t_2-t_1}$ (recall \eqref{eqn:new_gauge}) show that
\begin{multline*}
 N^{\frac{1}{2}}
\frac{g(\ell x_1+i_1,2y_1+j_1)}{g(\ell x_2+i_2,2y_2+j_2)}
K_{\operatorname{Int}}(\ell x_1+i_1, 2y_1+j_1; \ell x_2+i_2, 2y_2+j_2 )
\\\rightarrow
\sigma^{t_2-t_1}
\frac{\nu(t_2,j_2)}{(2\pi\i)^2}
\int_{|z_1|=R_1}
\int_{\gamma_{\ell}'}
\e^{\frac{\sigma^2}{2}
(z_2^2-z_1^2)}\e^{\mu_1 z_1-\mu_2 z_2}\frac{z_2^{t_2}}{z_1^{t_1}}\frac{\d z_1\d z_2}{z_2-z_1},
\end{multline*}
with contours of integration as in Figure \ref{fig:contours_gue_corners}.
The proof is complete, after making the change of variables $z_k=\sigma^{-1}\zeta_k$, for $k=1,2$ inside the integral.
\end{proof}

\section{The parameters~$\tau $ and~$\sigma^2$}\label{sec:ty_sigma} 
In this section we express~$\tau $ and~$\sigma^2$ in terms of the edge weights. Recall that
\begin{equation}\label{eq:action_function_derivative}
\d G=\left(\frac{\ell}{z-1}+\frac{w'(z)}{w(z)}-\frac{\tau }{z}\right)\d z,
\end{equation}
where $w(z)$ is the map $(z,w)\mapsto w$, and the constant~$\tau $ is defined so that~$\d G(q_\infty)=0$. Let~$G(z)$ be the action function defined in the local coordinates around~$q_\infty$ given by the map $(z,w)\mapsto z$. Then~$\sigma^2$ is defined as~$\sigma^2=G''(1)$. See Lemma~\ref{lem:zerospoles} and Corollary~\ref{lem:action_taylor}.

The goal of this section is to prove the following proposition.
\begin{proposition}\label{prop:exact_values}
Let~$\tau $ and~$\sigma^2$ be as above and set~$a_k=\alpha_k\beta_{k-1}^{-1}$ and~$b_k=\alpha_k^{-1}\beta_k$. Then
\begin{equation}\label{eq:ty_sum}
\tau =
\sum_{k=1}^\ell \frac{1+a_k+a_kb_k+a_kb_ka_{k+1}}
{(1+a_k)(1+b_k)(1+a_{k+1})}
\end{equation}
and
\begin{equation}\label{eq:sigma_sum}
\sigma^2=
\sum_{k=1}^\ell \frac{(1+a_k+a_kb_k+a_kb_ka_{k+1})(b_k+a_{k+1}+b_ka_{k+1}+a_ka_{k+1})}
{(1+a_k)^2(1+b_k)^2(1+a_{k+1})^2}
\end{equation}
where~$a_{\ell+1}=a_1$.
\end{proposition}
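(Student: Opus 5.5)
We work in the local coordinate $z$ near $q_\infty$, where $w=w_1(z)$ is the eigenvalue of $\Phi(z)$ with a pole of order $\ell$ at $z=1$. By \eqref{eq:action_function_derivative}, $G'(z)=\ell/(z-1)+w'/w-\tau/z$. The condition $\d G(q_\infty)=0$ gives $\tau$ as the constant term of $\ell/(z-1)+(\log w)'$ at $z=1$. Then $\sigma^2=G''(1)$ is the linear coefficient of $(\log w)'$ plus $\tau$. The plan is therefore to expand
\[
\log\bigl((z-1)^\ell w(z)\bigr)=\log B+c_1(z-1)+c_2(z-1)^2+\Ordo((z-1)^3).
\]
With this expansion, $\tau=c_1$ and $\sigma^2=2c_2+\tau$, using $G''=(\log((z-1)^\ell w))''+\tau/z^2$. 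Lemma~\ref{lem:at_1} already gives the zeroth-order term $B$.

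To get $c_1,c_2$, I will use $w_1+w_2=\Tr\Phi$ and $w_1w_2=1$. Since $w_2=\Ordo((z-1)^\ell)$, we have $(z-1)^\ell w_1=(z-1)^\ell\Tr\Phi(z)+\Ordo((z-1)^{2\ell})$. For $\ell\ge2$ this error does not affect the coefficients up to order $(z-1)^2$. The case $\ell=1$ needs a small separate correction from $w_2$, or can be checked directly. So it suffices to expand $T(z)=\Tr\bigl(\prod_{m=1}^{2\ell}\tilde\phi_m(z)\bigr)$ to second order. Here $\tilde\phi_{2i-1}=\phi_{2i-1}$ and $\tilde\phi_{2i}=(z-1)\phi_{2i}=z\begin{psmallmatrix}1&\beta_i^{-1}z^{-1}\\ \beta_i&1\end{psmallmatrix}$.

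At $z=1$ every factor is rank one, $u_mv_m^T$ (as in the proof of Lemma~\ref{lem:at_1}). So write $\tilde\phi_m(z)=u_mv_m^T+(z-1)E_m+(z-1)^2F_m+\dots$, where $E_m,F_m$ are explicit: only the $z^{-1}$ entry varies, plus the scalar factor $z$ for even $m$. Expanding the trace of the product, the first-order term is a sum over $m$ of $v_{m-1}^TE_mu_{m+1}$ multiplied by the product of the scalar overlaps $v_k^Tu_{k+1}$, which are $1+\alpha_k^{-1}\beta_k$ and $1+\alpha_{k+1}\beta_k^{-1}$. Dividing by $B$ makes $c_1$ a sum of local terms, each depending only on neighbouring weights. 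Grouping the odd/even pair $2k-1,2k$ should produce the summand of \eqref{eq:ty_sum} in the variables $a_k,b_k,a_{k+1}$.

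For $c_2=\tfrac{T''}{2T}-\tfrac12\bigl(\tfrac{T'}{T}\bigr)^2$ at $z=1$, the second-order term of $T$ has three kinds of contributions: the $F_m$ terms, and pairs $E_m,E_{m'}$ with $m<m'$. The rank-one structure makes the trace factorize along the cycle. In the pair terms it splits into the product of two "paths" between $m$ and $m'$. The $-\tfrac12(T'/T)^2$ subtraction should cancel the non-local part of these pair products, leaving only adjacent-pair corrections. The result should then collapse to a sum of local terms, to be matched with \eqref{eq:sigma_sum} after adding $\tau$.

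The main obstacle is this cancellation and the final algebraic identification. It is routine in principle but bookkeeping-heavy. I would first verify it for $\ell=1$ and for the two-periodic example, where the expected values are $\tau=1$ and $\sigma^2=2/(a+a^{-1})^2$. Then I would carry out the general cyclic computation, using the constraint $\prod\alpha_i=\prod\beta_i$, which gives $\prod_k a_kb_k=1$, wherever telescoping is needed.
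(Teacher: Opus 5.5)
\textbf{Gap: the final identification with \eqref{eq:sigma_sum} cannot succeed, because that identity is false.}

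Your route is the paper's route. With $q(z)=(z-1)^\ell\Tr\Phi(z)$ one has $\tau=q'(1)/q(1)$ and, for $\ell\ge 2$, $\sigma^2=q''(1)/q(1)-\tau^2+\tau$; these are your $c_1$ and $2c_2+\tau$. The trace is then expanded using the rank-one structure at $z=1$. The paper groups the factors as $\varphi_k=(z-1)\phi_{2k-1}\phi_{2k}$, which is linear in $z$ with $\varphi_k'=\begin{psmallmatrix}1&0\\ \alpha_k+\beta_k&1\end{psmallmatrix}$, so no $F$-terms occur. Your derivation of \eqref{eq:ty_sum} goes through as planned.

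The step that fails is the last one for $\sigma^2$. The adjacent-pair corrections you anticipate do not vanish. For uniform weights, $(z-1)^\ell w=(1+\sqrt z)^{2\ell}$ exactly, which gives $\tau=\ell/2$ and $\sigma^2=G''(1)=\ell/8$, whereas \eqref{eq:sigma_sum} gives $\ell/4$. In the two-periodic example, $q''(1)=2\Tr(\varphi_1'\varphi_2')=4$ (the trace of a unipotent matrix) and $q(1)=4(a+a^{-1})^2$. So $\sigma^2=(a+a^{-1})^{-2}$, half of the value you list as expected. Your own planned sanity check would therefore expose the discrepancy.

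The paper reaches \eqref{eq:sigma_sum} by asserting $q''(1)/q(1)=2\sum_{n<k}\tau_n\tau_k$, where $\tau_k$ is the $k$th summand of \eqref{eq:ty_sum}. This treats adjacent pairs as if they factorized, and yields $\sigma^2=\sum_k\tau_k(1-\tau_k)$, which is exactly the right-hand side of \eqref{eq:sigma_sum}.

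The correction is explicit. Write $\varphi_k(1)=(1+b_k)u_kv_k^T$ with $u_k=(1,\alpha_k)^T$, $v_k=(1,\beta_k^{-1})^T$, so that $v_k^Tu_{k+1}=1+a_{k+1}$. Between $\varphi_n'$ and $\varphi_{n+1}'$ insert
\[
I=\frac{u_{n+1}v_n^T+\tilde u\tilde v^T}{1+a_{n+1}},\qquad \tilde u=(\beta_n^{-1},-1)^T,\quad \tilde v=(\alpha_{n+1},-1)^T.
\]
After dividing by $q(1)$, the first piece gives $\tau_n\tau_{n+1}$. Since $\varphi_n'\tilde u=\beta_n^{-1}u_n$ and $\tilde v^T\varphi_{n+1}'=-\beta_{n+1}v_{n+1}^T$, the second piece gives $-a_{n+1}b_{n+1}(1+b_n)^{-1}(1+b_{n+1})^{-1}(1+a_{n+1})^{-2}$. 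Hence, for $\ell\ge2$ with cyclic indices,
\[
\sigma^2=\sum_{k=1}^\ell\tau_k(1-\tau_k)-2\sum_{k=1}^\ell\frac{a_kb_k}{(1+b_{k-1})(1+b_k)(1+a_k)^2}.
\]
For $\ell=2$ the single pair is adjacent on both sides and receives both corrections. This formula reproduces $\ell/8$ and $(a+a^{-1})^{-2}$ above, and it also checks against a direct computation of $q$ for generic $\ell=2$ weights.

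So your method proves \eqref{eq:ty_sum}, but no method can prove \eqref{eq:sigma_sum}, since it is false. The $\sigma^2$ part of the statement should be replaced by the formula above. The steepest-descent analysis itself only uses $\sigma^2=G''(1)$, so only the closed form is affected.
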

Before we prove the proposition, we introduce, for convenience, the polynomial
\begin{equation}
q(z)=(z-1)^\ell \Tr \Phi(z).
\end{equation} 
Recall from Section \ref{sec:kernel} that~$p(z)=q(z)^2-(z-1)^{2\ell}$ and, locally around~$(z,w)=q_\infty$,~$(z-1)^\ell w(z)=q(z)+\frac{1}{2}\sqrt{p(z)}$, where~$p$ is defined in Lemma~\ref{lem:zeros_disc}, and the square root is the principle branch. The following lemma will be useful in the proof of Proposition~\ref{prop:exact_values}.
\begin{lemma}\label{lem:pq}
The following equalities hold:
\begin{enumerate}[(a)]
\item~$p(1)=q(1)^2$, \label{eq:pq_1}
\item~$\frac{p'(1)}{p(1)}=2\frac{q'(1)}{q(1)}$, \label{eq:pq_2}
\item~$\left.\frac{w'(z)}{w(z)}+\frac{\ell}{z-1}\right|_{z=1}=\frac{q'(1)}{q(1)}$, \label{eq:pq_3}
\item~$\left.\frac{\d}{\d z}\left(\frac{w'(z)}{w(z)}+\frac{\ell}{z-1}\right)\right|_{z=1}=\frac{q''(1)}{q(1)}-\left(\frac{q'(1)}{q(1)}\right)^2$. \label{eq:pq_4}
\end{enumerate}
\end{lemma}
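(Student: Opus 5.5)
The plan is to reduce all four identities to the local behaviour at $z=1$ of the single analytic function $W(z)=(z-1)^\ell w(z)$, where $w(z)$ is the branch of $w$ near $q_\infty$. By Lemma~\ref{lem:at_1}, $W$ is analytic at $z=1$. By~\eqref{eq:at_1_trace} and~\eqref{eq:at_1_2}, $W(1)=q(1)=B>0$. Throughout, $B$ denotes the constant from Lemma~\ref{lemma:local_action_function}.

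Parts (a) and (b) follow directly from the formula in Lemma~\ref{lem:zeros_disc}, $p(z)=q(z)^2-4(z-1)^{2\ell}$. The second term vanishes at $z=1$ to order $2\ell\ge 2$. Evaluating at $z=1$ gives $p(1)=q(1)^2$. Differentiating once gives $p'(1)=2q(1)q'(1)$. Dividing these and using $q(1)=B\neq 0$ gives (b).

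For (c) and (d), the key identity is
\begin{equation}
\frac{w'(z)}{w(z)}+\frac{\ell}{z-1}=\frac{W'(z)}{W(z)}.
\end{equation}
Hence (c) amounts to $W'(1)/W(1)=q'(1)/q(1)$. Since $\frac{\d}{\d z}(W'/W)=W''/W-(W'/W)^2$, (d) amounts to $W''(1)/W(1)=q''(1)/q(1)$. Both follow once we know $W-q=\Ordo((z-1)^{m})$ with $m$ large enough: $m\ge 2$ suffices for (c) and $m\ge 3$ for (d).

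To get this, I use that $w$ and $w^{-1}$ are the two eigenvalues of $\Phi(z)$, since $\det\Phi=1$. Their sum is $\Tr\Phi$, so $W$ solves
\begin{equation}
W^2-qW+(z-1)^{2\ell}=0, \qquad W=\tfrac12\bigl(q+\sqrt{p}\bigr).
\end{equation}
The branch of $\sqrt{p}$ is the one with $\sqrt{p(1)}=q(1)$; this is the principal branch, because $q(1)>0$. The other root stays bounded at $z=1$, which is how the branch at $q_\infty$ is identified. Expanding the square root,
\begin{equation}
\sqrt{p}=q\sqrt{1-4(z-1)^{2\ell}/q^2}=q-\frac{2(z-1)^{2\ell}}{q}+\Ordo\bigl((z-1)^{4\ell}\bigr),
\end{equation}
which gives $W=q-(z-1)^{2\ell}/q+\dots$, so $m=2\ell$. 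This already gives (c) for every $\ell\ge1$.

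The step I expect to need care is the order count in (d). When $\ell\ge 2$ we have $2\ell\ge 4>2$, so $W''(1)=q''(1)$ and (d) holds as stated. When $\ell=1$ the correction term contributes $W''(1)=q''(1)-2/q(1)$, so (d) would pick up the extra term $-2/q(1)^2$. I would therefore either state the standing assumption $\ell\ge 2$, which is the genuinely periodic setting and makes the genus $\ell-1\ge 1$, or record this correction for $\ell=1$ explicitly. I would also double-check the normalization of the discriminant (the factor $4$ versus $1$ in front of $(z-1)^{2\ell}$) against Lemma~\ref{lem:zeros_disc}. That constant changes only the size of the correction term, not the conclusions for $\ell\ge2$.
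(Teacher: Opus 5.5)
Your proof is correct and is essentially the paper's argument. Both rest on the explicit branch $(z-1)^\ell w=\tfrac12(q+\sqrt{p})$ near $q_\infty$ together with $p-q^2=-4(z-1)^{2\ell}$. Your Taylor matching $W=q+\Ordo((z-1)^{2\ell})$ is a cleaner packaging of the paper's evaluation of the log-derivatives through $p(1)$, $p'(1)$, $p''(1)$. Your normalisations $p=q^2-4(z-1)^{2\ell}$ and $W=\tfrac12(q+\sqrt p)$ are the correct ones; the versions the paper writes just before the lemma contain typos, which happen not to affect the conclusions.

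Your caveat about (d) is also right, and it applies equally to the paper's proof. The paper's step $p''(1)/p(1)=2(q'(1)/q(1))^2+2q''(1)/q(1)$ needs $\ell\ge 2$. For $\ell=1$ one gets $W=(1+\sqrt z)^2$ and $q=2(z+1)$, so the two sides of (d) are $-3/8$ and $-1/4$.
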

\begin{proof}
We prove the equalities in the order given in the statement. 
\begin{enumerate}[(a)]
\item Equality~\eqref{eq:pq_1} follows directly from the equality~$p(z)=q(z)^2-(z-1)^{2\ell}$.
\item Differentiating~$p$ at~$z=1$ give us the equality~$p'(1)=2q(1)q'(1)$, and using~\eqref{eq:pq_1}, this leads to~\eqref{eq:pq_2}.
\item We apply the logarithmic derivative evaluated at~$z=1$ to the equality~$(z-1)^\ell w(z)=q(z)+\frac{1}{2}\sqrt{p(z)}$. It is clear that the left-hand side become the left-hand side of~\eqref{eq:pq_3}, and using~\eqref{eq:pq_1} and~\eqref{eq:pq_2} it is not hard to see that the right-hand side becomes the right-hand side of~\eqref{eq:pq_3}.
\item We now take the derivative of the logarithmic derivative of the same equality as in the proof of~\eqref{eq:pq_3}. The left-hand side becomes the left-hand side of~\eqref{eq:pq_4}. Using~\eqref{eq:pq_3}, as well as~\eqref{eq:pq_1} and~\eqref{eq:pq_2}, we get that the right-hand side is equal to
\begin{footnotesize}
\begin{equation}
\frac{q''(1)+\frac{1}{4}\sqrt{p(1)}\left(\frac{p''(1)}{p(1)}-\frac{1}{2}\left(\frac{p'(1)}{p(1)}\right)^2\right)}{q(1)+\frac{1}{2}\sqrt{p(1)}}-\left(\frac{q'(1)}{q(1)}\right)^2
=\frac{2}{3}\left(\frac{q''(1)}{q(1)}+\frac{1}{4}\frac{p''(1)}{p(1)}-\frac{1}{2}\left(\frac{q'(1)}{q(1)}\right)^2\right)-\left(\frac{q'(1)}{q(1)}\right)^2.
\end{equation}
\end{footnotesize}
Similarly to how we proved~\eqref{eq:pq_2}, we get that~$\frac{p''(1)}{p(1)}=2\left(\frac{q'(1)}{q(1)}\right)^2+2\frac{q''(1)}{q(1)}$. Combining this with the previous equality leads to the right-hand side of~\eqref{eq:pq_4}.
\end{enumerate}
\end{proof}

With the previous lemma together with Lemma~\ref{lem:at_1}, we prove Proposition \ref{prop:exact_values}. 
\begin{proof}[Proof of Proposition~\ref{prop:exact_values}]
For brevity, we set~$\varphi_m(z)=(z-1)\phi_{2m-1}(z)\phi_{2m}(z)$ for~$m=1,\dots,\ell$. Then~$q(z)=\Tr \prod_{m=1}^\ell \varphi(z)$. Note first that
\begin{equation}
\varphi_m'(z)=\varphi_m'=
\begin{pmatrix}
1 & 0 \\
\alpha_m+\beta_m & 1
\end{pmatrix}.
\end{equation}
By the product rule,
\begin{equation}\label{eq:prod_trace_1}
\frac{\d}{\d z}\prod_{m=1}^\ell \varphi_m(z)=\sum_{k=1}^\ell \left(\prod_{m=1}^{k-1} \varphi_m(z)\right) \varphi_k' \left(\prod_{m=k+1}^\ell \varphi_m(z)\right),
\end{equation} 
and
\begin{equation}\label{eq:prod_trace_2}
\frac{\d^2}{\d z^2}\prod_{m=1}^\ell \varphi_m(z)
=
2\sum_{1\leq n<k\leq \ell} \left(\prod_{m=1}^{n-1} \varphi_m(z)\right) \varphi_n'\left(\prod_{m=n+1}^{k-1} \varphi_m(z)\right) \varphi_k' \left(\prod_{m=k+1}^\ell \varphi_m(z)\right). 
\end{equation}

We begin by expressing~$\tau $ in term of the edge weights. By~\eqref{eq:action_function_derivative} and Lemma~\ref{lem:pq}~\eqref{eq:pq_3},~$\d G(q_\infty)=0$ if and only if~$\tau =\frac{q'(1)}{q(1)}$. Lemma~\ref{lem:at_1} implies that~$q(1)=\prod_{m=1}^\ell(1+\alpha_m^{-1}\beta_m)(1+\alpha_{m+1}\beta^{-1})$.  It follows from the same lemma and~\eqref{eq:prod_trace_1} that
\begin{multline}
\frac{q'(1)}{q(1)}=\sum_{k=2}^{\ell-1} \frac{\Tr\left(
\begin{pmatrix}
1 \\
\alpha_1
\end{pmatrix}
\begin{pmatrix}
1 & \beta_{k-1}^{-1}
\end{pmatrix}
\varphi_k'
\begin{pmatrix}
1 \\
\alpha_{k+1}
\end{pmatrix}
\begin{pmatrix}
1 & \beta_{\ell}^{-1}
\end{pmatrix}\right)}
{(1+\alpha_k\beta_{k-1}^{-1})(1+\alpha_k^{-1}\beta_k)(1+\alpha_{k+1}\beta_k^{-1})}\frac{1}{1+\alpha_1\beta_\ell^{-1}} \\
+\frac{\Tr\left(
\begin{pmatrix}
1 \\
\alpha_1
\end{pmatrix}
\begin{pmatrix}
1 & \beta_{\ell-1}^{-1}
\end{pmatrix}
\varphi_\ell'
\right)}
{(1+\alpha_\ell\beta_{\ell-1}^{-1})(1+\alpha_\ell^{-1}\beta_\ell)(1+\alpha_1\beta_\ell^{-1})} 
+\frac{\Tr\left(
\varphi_1'
\begin{pmatrix}
1 \\
\alpha_{2}
\end{pmatrix}
\begin{pmatrix}
1 & \beta_{\ell}^{-1}
\end{pmatrix}\right)}
{(1+\alpha_1\beta_\ell^{-1})(1+\alpha_1^{-1}\beta_1)(1+\alpha_{2}\beta_1^{-1})}.
\end{multline}
Computing the trace of the matrix proves that
\begin{equation}\label{eq:ty_sum_2}
\tau =
\sum_{k=1}^\ell \frac{1+\alpha_{k+1}\beta_{k-1}^{-1}+\beta_{k-1}^{-1}\alpha_k+\beta_{k-1}^{-1}\beta_k}
{(1+\alpha_k\beta_{k-1}^{-1})(1+\alpha_k^{-1}\beta_k)(1+\alpha_{k+1}\beta_k^{-1})},
\end{equation}
which proves~\eqref{eq:ty_sum}. 

To compute~$\sigma^2$, we use~\eqref{eq:action_function_derivative} and Lemma~\ref{lem:pq}~\eqref{eq:pq_4} to see that
\begin{equation}
\sigma^2=\frac{q''(1)}{q(1)}-\left(\frac{q'(1)}{q(1)}\right)^2+\tau =\frac{q''(1)}{q(1)}-\tau ^2+\tau 
\end{equation}
Using a similar computation as we did for~$\tau $, using~\eqref{eq:prod_trace_2} instead of~\eqref{eq:prod_trace_1}, we get
\begin{small}
\begin{equation}
\frac{q''(1)}{q(1)}
=2\sum_{1\leq n<k\leq \ell} \frac{1+\alpha_{n+1}\beta_{n-1}^{-1}+\beta_{n-1}^{-1}(\alpha_n+\beta_n)}
{(1+\alpha_n\beta_{n-1}^{-1})(1+\alpha_n^{-1}\beta_n)(1+\alpha_{n+1}\beta_n^{-1})}
\frac{1+\alpha_{k+1}\beta_{k-1}^{-1}+\beta_{k-1}^{-1}(\alpha_k+\beta_k)}
{(1+\alpha_k\beta_{k-1}^{-1})(1+\alpha_k^{-1}\beta_k)(1+\alpha_{k+1}\beta_k^{-1})}.
\end{equation}
\end{small}
We note that the right-hand side is equal to~$\tau ^2$, using~\eqref{eq:ty_sum_2}, up to the terms along the diagonal~$n=k$:
\begin{equation}
\frac{q''(1)}{q(1)}-\tau ^2=
-\sum_{k=1}^\ell \left(\frac{1+\alpha_{k+1}\beta_{k-1}^{-1}+\beta_{k-1}^{-1}\alpha_k+\beta_{k-1}^{-1}\beta_k}
{(1+\alpha_k\beta_{k-1}^{-1})(1+\alpha_k^{-1}\beta_k)(1+\alpha_{k+1}\beta_k^{-1})}\right)^2.
\end{equation}
Adding~\eqref{eq:ty_sum_2} to the above equality, lead us to~\eqref{eq:sigma_sum}.
\end{proof}

\bibliographystyle{plain}
\bibliography{bibliotek}

\end{document}